\newtheorem*{Claim}{Claim}
\newtheorem{Lemma}{Lemma}
\newtheorem{Theorem}[Lemma]{Theorem}
\theoremstyle{definition}
\newtheorem{case}{Case}
\DeclareMathOperator{\R}{\mathbb{R}}
\newcommand{\ReSt}[2]{\left( #1 \mid #2 \right)}
\newcommand{\dfix}[2]{\mathfrak{#1}_{#2}}
\newcommand{\dproj}[1]{\mathrm{proj}\big[ #1 \big]} 
\newcommand{\drm}[2]{\mathrm{#1} \hspace{0.4mm} #2} 
\newcommand{\dsgn}[1]{\pmb{sgn} \hspace{0.7mm} \pmb{#1}} 
\newcommand{\signdu}[2]{\mathrm{sgn} \hspace{0.2mm} [#2] (#1)}
\newcommand{\dAM}[1]{\mathbf{A}( #1 )} 
\newcommand{\dbipart}[2]{\left[#1_{#2'},#1_{#2''}\right]}
\newcommand{\dubipart}[3]{\big[#1_{#2},#1_{#3}\big]}
\newcommand{\dumultipart}[3]{\left[#1_{#2},\cdots,#1_{#3}\right]}
\newcommand{\dusubgroup}[3]{\langle #1_{#2},\cdots,#1_{#3} \rangle}
\newcommand{\duspan}[2]{ \mathrm{span} \hspace{0.2mm} \left\{ #1:#2 \right\}}
\title{\upshape\bf On Graph Isomorphism Problem }
\author{ Wenxue Du \\
{\small  \it School of Mathematical Science, Anhui University, Hefei,
230601, China} \\
{\footnotesize E-mail:  wenxuedu@gmail.com }
}
\date{}
\begin{document}
\maketitle

\begin{abstract} 
Let $G$ and $H$ be two simple graphs. A bijection $\phi:V(G)\rightarrow V(H)$ is called an {\it isomorphism} between $G$ and $H$ if $(\phi\hspace{0.5mm} v_i)(\phi\hspace{0.5mm} v_j)\in E(H)$ $\Leftrightarrow$ $v_i v_j\in E(G)$ for any two vertices $v_i$ and $v_j$ of $G$. In the case that $G = H$, we say $\phi$ an automorphism of $G$ and denote the group consisting of all automorphisms of $G$ by $\mathrm{Aut}\hspace{0.5mm} G$. As well-known, the problem of determining whether or not two given graphs are isomorphic is called {\it Graph Isomorphism Problem} (GI). One of key steps in resolving GI is to work out the partition $\Pi^*_G$ of $V(G)$ composed of orbits of $\drm{Aut}{G}$. By means of geometric features of $\Pi^*_G$ and combinatorial constructions such as the multipartite graph $\dumultipart{\Pi^*}{t_1}{t_s}$, where $t_1,\ldots,t_s$ are vertices of $G$ constituting an orbit of $\drm{Aut}{G}$ and $\Pi^*_{t_i}$ ($i=1,\ldots,s$) is the partition comprised of orbits of the stabilizer $( \mathrm{Aut}\hspace{0.5mm} G )_{t_i}$, we can reduce the problem of determining $\Pi_G^*$ to that of working out a series of partitions of $V(G)$ each of which consists of orbits of a stabilizer that fixes a sequence of vertices of $G$, and thus the determination of the partition $\Pi^*_v$ is a critical transition.  

On the other hand, we have for a given subspace $U \subseteq \mathbb{R}^n$ a permutation group $\mathrm{Aut}\hspace{0.5mm} U$ which is defined as $\{ \sigma \in S_n : \sigma \hspace{0.5mm} U = U \}$. As a matter of fact, $\mathrm{Aut}\hspace{0.5mm} G = \cap_{\lambda \in \mathrm{spec} \hspace{0.3mm} \mathbf{A}(G) } \mathrm{Aut}\hspace{0.5mm} V_{\lambda}$, where $V_{\lambda}$ is the eigenspace of the adjacency matrix $\mathbf{A}(G)$ corresponding to $\lambda$, and moreover we can obtain a good approximation $\Pi[ \oplus V_{\lambda} ; v ]$ to $\Pi_v^*$ by analyzing a decomposition of $V_{\lambda}$ resulted from the division of $V_{\lambda}$ by subspaces $\{ \dproj{ V_{\lambda} }( \pmb{e}_v )^{\perp} : v \in V(G) \}$, where $\dproj{ V_{\lambda} }( \pmb{e}_v )$ denotes the orthogonal projection of the vector $\pmb{e}_v$ onto $V_{\lambda}$ and $\dproj{ V_{\lambda} }( \pmb{e}_v )^{\perp}$ stands for the orthogonal complement of the subspace spanned by $\dproj{ V_{\lambda} }( \pmb{e}_v )$ in $V_{\lambda}$. In fact, there is a close relation among subspaces spanned by cells of the equitable partition $\Pi[ \oplus V_{\lambda} ; v ]$ of $G$, which enables us to determine $\Pi_v^*$ more efficiently. In virtue of that, we devise a deterministic algorithm solving GI in time $n^{ O( \log n ) }$, which is equal to $2^{ O\left( \log^2 n \right) }$.
\end{abstract}

\vspace{3mm}
2010 {\it Mathematics Subject Classification}. Primary 05C25, 05C50, 05C60; Secondary 05C85.

\section{Introduction}

Let $G$ and $H$ be two simple graphs. A bijective map $\phi$ from $V(G)$ to $V(H)$ is called an {\it isomorphism} between $G$ and $H$ if $(\phi\hspace{0.5mm} v_i)(\phi\hspace{0.5mm} v_j) \in E(H)$ $\Leftrightarrow$ $v_i v_j\in E(G)$ for any two vertices $v_i$ and $v_j$ of $G$. In the case that there is such an isomorphism between $G$ and $H$, we say that $G$ and $H$ are isomorphic, which is denoted by $G \cong H$. The problem of determining whether or not two given graphs are isomorphic is called {\it Graph Isomorphism Problem} (GI).

One of striking facts about GI is the following established by Whitney in 1930s.

\begin{Theorem}\label{Thm-WhitneyIsomorphism} 
Two connected graphs are isomorphic if and only if their line graphs are isomorphic, with a single exception: $K_3$ and $K_{1,3}$, which are not isomorphic but both have $K_3$ as their line graph.
\end{Theorem}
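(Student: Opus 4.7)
The forward implication is immediate: an isomorphism $\phi:V(G)\to V(H)$ induces the bijection $\tilde\phi(uv)=\phi(u)\phi(v)$ between $E(G)$ and $E(H)$, which preserves the relation ``two edges share a vertex'' and hence is an isomorphism of the line graphs. The content of the theorem lies in the converse.

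Given an isomorphism $\psi:L(G)\to L(H)$, my plan is to recover, purely from the graph structure of $L(G)$, the \emph{stars} $S_v=\{e\in E(G):v\in e\}$, one for each $v\in V(G)$; to verify that the family $\{S_v\}$ is carried by $\psi$ onto the analogous family $\{S_w:w\in V(H)\}$; and then to put $\phi(v)=w$ whenever $\psi(S_v)=S_w$. Each $S_v$ is a clique in $L(G)$, and every vertex of $L(G)$ lies in exactly two of these stars (one for each endpoint of the corresponding edge), so two vertices of $G$ are adjacent iff their stars meet. Hence, once the stars are recognized on both sides, the induced bijection $\phi$ is automatically an isomorphism.

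The combinatorial heart of the argument is the classification of triangles in $L(G)$: every $3$-clique corresponds either to three edges of $G$ meeting at a common vertex (a \emph{star triangle}) or to three edges of $G$ forming a $K_3$ (a \emph{$K_3$-triangle}). These types can be separated as follows. If $\{e_1,e_2,e_3\}$ is a $K_3$-triangle on vertices $\{v_1,v_2,v_3\}$, then any fourth edge $f$ adjacent to all three in $L(G)$ must meet each $e_i$; with only two endpoints $f$ must have them both in $\{v_1,v_2,v_3\}$, forcing $f\in\{e_1,e_2,e_3\}$. In contrast, a star triangle at a vertex of degree $\geq 4$ always admits such a fourth edge. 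Combining this local test with the global fact that every edge of $G$ lies in exactly two stars will let one recover the family $\{S_v\}$ canonically from $L(G)$.

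The hard part, and the source of the $\{K_3,K_{1,3}\}$ exception, is the degenerate configuration of a star triangle at a vertex of degree exactly $3$ whose three neighbors are mutually nonadjacent: no fourth edge of $G$ meets all three, so the extension test cannot distinguish it from a $K_3$-triangle. This is exactly what $L(K_3)=L(K_{1,3})=K_3$ realizes, and in those two graphs no further information is available to break the symmetry. For every other connected graph the remaining global structure of $L(G)$ (neighboring cliques, degrees of the triangle vertices inside $L(G)$, and the incidence pattern of the stars) supplies the missing distinction. I would therefore finish the proof by separating off a short finite case analysis of graphs whose line graph has very few vertices---from which $\{K_3,K_{1,3}\}$ emerges as the unique obstruction---and applying the general reconstruction of $\{S_v\}$ to all larger connected $G$.
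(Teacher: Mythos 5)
The paper does not prove this statement at all: it is quoted in the introduction as a classical background fact, due to Whitney, with no argument supplied. So there is no ``paper proof'' to compare against; what can be judged is your sketch on its own merits. Your overall strategy is the standard one (recover the stars $S_v$ from the clique structure of $L(G)$, \`a la Whitney--Krausz, and read off the isomorphism of $G$ from the star families), so the approach is sound in outline, and the forward direction and the reduction ``stars recovered $\Rightarrow$ isomorphism recovered'' are fine.

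However, as written there is a genuine gap, and it sits exactly where the theorem's content lies. First, your local test (``does some fourth vertex of $L(G)$ extend the triangle?'') does not do what you claim: a star triangle at a vertex of degree exactly $3$ admits no such fourth edge \emph{regardless} of whether the three neighbors are mutually adjacent, so the ambiguous configurations are more plentiful than the single case you isolate; the standard repair is the odd/even triangle criterion (a triangle of $L(G)$ is odd if some other vertex of $L(G)$ is adjacent to an odd number of its vertices), which separates star triangles from $K_3$-triangles except in the small exceptional graphs. Second, the decisive assertions --- that for every connected $G$ other than $K_3$ and $K_{1,3}$ the family $\{S_v\}$ is canonically recoverable and is carried by $\psi$ onto $\{S_w\}$ --- are precisely Whitney's theorem and are deferred to an unspecified appeal to ``global structure'' plus a finite case analysis that is never carried out. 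That case analysis is not vacuous: e.g.\ $L(K_4)$ is the octahedron, which has two distinct Krausz decompositions into cliques (the four stars and the four triangle-cliques), so canonical star recognition genuinely fails there and one must argue separately that no non-isomorphic partner arises; similar care is needed for the paw and other small graphs containing triangles. Until the triangle classification is corrected and these exceptional configurations are actually analyzed, the converse direction is a plan rather than a proof.
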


Clearly, the relation above offers a reduction of GI from general graphs to a special class of graphs --- line graphs, which accounts only for a small fraction of all graphs. This fact suggests that GI may not be very hard. In fact, GI is well solved from practical point of view and there are a number of efficient algorithms available \cite{McP}. Even from worst-case point of view, GI may not be as hard as NP-complete problems. As a matter of fact GI is not NP-complete unless the polynomial hierarchy collapses to its second level \cite{BHZ,Schonig}. On the other hand, however, we have no efficient algorithm so far for general graphs in worst-case analysis, while for restricted graph classes there are efficient algorithms, for instance, for graphs with bounded degree \cite{Luks} and for graphs with bounded eigenvalue multiplicity \cite{BaGrMu}. L. Babai \cite{Babai} recently declared an algorithm resolving GI for any graph of order $n$ within time $\exp\big\{ (\log n)^{O(1)} \big\}$ in worst-case analysis. In the present paper, we develop a machinery for GI from geometric point of view, which enables us devise a deterministic algorithm solving GI for any graph of order $n$ within time $2^{ O\left( \log^2 n \right) }$ in worst-case analysis. 

In the case that two graphs $G$ and $H$ involved are the same, an isomorphism is called an {\it automorphism} of $G$. Clearly, all automorphisms of $G$ form a group under composition of maps, which is denoted by $\drm{Aut}{G}$. Suppose the vertex set $V(G)$ is $\{1,2,\ldots,n\}$ abbreviated to $[n]$. Then a bijective map $\phi$ on $V(G)$ is a permutation of $[n]$, and thus the automorphism group $\drm{Aut}{G}$ is a permutation group of $[n]$. 

There is a natural action of $\drm{Aut}{G}$ on the vertex set $[n]$: $I v = v$, where $I$ is the identity of $\drm{Aut}{G}$, and $\gamma (\sigma v) = (\gamma \sigma) v$ for any two permutations $\gamma$ and $\sigma$ in $\drm{Aut}{G}$. Accordingly, we can obtain a subset $\{ \sigma v : \sigma \in \drm{Aut}{G} \}$ of $[n]$, which is called an {\it orbit} of $\drm{Aut}{G}$. Obviously, the orbits of $\drm{Aut}{G}$ constitute a partition of $[n]$, which is denoted by $\Pi_G^*$, and each orbit is called a cell of $\Pi_G^*$. One can readily see that for any subgroup $\mathfrak{S}$ of $\drm{Aut}{G}$ we have a partition of $[n]$ consisting of orbits of $\mathfrak{S}$.

Suppose $G$ and $H$ are isomorphic and $\phi$ is an isomorphism between $G$ and $H$. It is easy to see that $\phi$ induces a bijection between cells of $\Pi_G^*$ and of $\Pi_H^*$. Apparently, if $\drm{Aut}{G}$ is trivial, {\it i.e.,} there is only one permutation, the identity, in $\drm{Aut}{G}$, then the bijection from $\Pi_G^*$ to $\Pi_H^*$ is actually equal to $\phi$. So let us consider more interesting cases and assume $\drm{Aut}{G}$ possesses at least one non-trivial orbit. 

We take a vertex $u_1$ from a non-trivial orbit of $\drm{Aut}{G}$. Then there is exactly one vertex $v_1$ of $H$ corresponding to $u_1$ through $\phi$, and accordingly  $\phi$ induces a bijection between cells of $\Pi_{u_1}^*$ and of $\Pi_{v_1}^*$, where $\Pi_{u_1}^*$ and $\Pi_{v_1}^*$ are two partitions of $V(G)$ and $V(H)$, respectively, consisting of orbits of $( \drm{Aut}{G} )_{u_1}$ and of $( \drm{Aut}{H} )_{v_1}$, and $( \drm{Aut}{G} )_{u_1}$ stands for the subgroup of $\drm{Aut}{G}$ which is defined as $\{ \gamma \in \drm{Aut}{G} : \gamma \hspace{0.5mm} u_1 = u_1 \}$ and called the {\it stabilizer of $u_1$ in $\drm{Aut}{G}$.} Moreover, if $( \drm{Aut}{G} )_{u_1}$ is non-trivial, we could choose another vertex $u_2$ from a non-trivial orbit of $( \drm{Aut}{G} )_{u_1}$. Then we can get a vertex $v_2 = \phi\hspace{0.5mm} u_2$ of $H$ so that there is a bijection between $\Pi_{u_1,u_2}^*$ and $\Pi_{v_1,v_2}^*$ induced also by $\phi$, where $\Pi_{u_1,u_2}^*$ and $\Pi_{v_1,v_2}^*$ are two partitions of $V(G)$ and $V(H)$, respectively, consisting of orbits of $( \drm{Aut}{G} )_{u_1,u_2}$ and of $( \drm{Aut}{H} )_{v_1,v_2}$, and $( \drm{Aut}{G} )_{u_1,u_2}$ called the {\it stabilizer of the sequence $u_1,u_2$ in $\drm{Aut}{G}$} is defined as $\{ \gamma \in \drm{Aut}{G} : \gamma \hspace{0.5mm} u_i = u_i, i = 1,2 \}$. Clearly, we can continue this process until the stabilizer of the sequence $u_1,\ldots,u_s$ is trivial, {\it i.e.,} $( \drm{Aut}{G} )_{u_1,\ldots,u_s} = \{ I \}$. 

Conversely, if we have those two groups of partitions $\Pi_{G}^*,\Pi_{u_1}^*,\ldots,\Pi_{u_1,\ldots,u_s}^*$ and $\Pi_{H}^*,\Pi_{v_1}^*,\ldots,\Pi_{v_1,\ldots,v_s}^*$ and know the corresponding relations between cells of partitions in each pair $(\Pi^*_G,\Pi^*_H)$, $(\Pi^*_{u_1},\Pi^*_{v_1})$, $\cdots$, $(\Pi^*_{u_1,\ldots,u_s},\Pi^*_{v_1,\ldots,v_s})$, then we can easily decide whether $G$ is isomorphic to $H$ or not and in the case of being isomorphic work out an isomorphism from $G$ to $H$. 

In the next part, we shall explore some geometric features of $\Pi_G^*$ that show us how to reduce the problem of determining $\Pi_G^*$ to that of working out a series of partitions of $[n]$ each of which consists of orbits of a stabilizer that fixes a sequence of vertices of $G$, and thus the determination of the partition $\Pi^*_v$ is a critical transition.

The {\it adjacency matrix} of $G$, denoted by $\mathbf{A}(G)$, is a $n\times n$ (0,1)-matrix where each entry $a_{ij}$ of the matrix is equal to 1 if and only if the two vertices $v_i$ and $v_j$ are adjacent in $G$. In the second part, we will reveal some of geometric features of $\drm{Aut}{G}$ by means of the decomposition $\oplus V_{\lambda} = \R^n$, where $V_{\lambda}$ is the eigenspace of $\dAM{G}$ corresponding to the eigenvalue $\lambda$. In virtue of that, we could build a partition $\Pi[ \oplus V_{\lambda};v ]$, which is a good approximation to $\Pi^*_{v}$.

\subsection{Geometric Features of $\Pi_{G}^*$}

Let $\Pi$ be a partition of $[n]$ with cells $C_1,\ldots,C_t$, which is said to be {\it equitable} if for any vertex $v$ in $C_i$, the number of neighbors of $v$ in $C_j$ is a constant $b_{ij}$ $(1\leq i,j \leq t)$, {\it i.e.,} the number of neighbors in every cell is independent of the vertex $v$. Clearly, if $\mathfrak{S}$ is a subgroup of $\drm{Aut}{G}$ then the partition of $[n]$ consisting of orbits of $\mathfrak{S}$ is an equitable one. On the other hand, we can construct a direct graph $G / \Pi$ from $G$ and its equitable partition $\Pi$, which is called the {\it quotient graph} of $G$ over $\Pi$. The vertex set of $G / \Pi$ is composed of cells of $\Pi$ and there are $b_{ij}$ arcs $(1\leq i,j \leq t)$ from the $i$th vertex to the $j$th vertex of $V(G  / \Pi)$. 

For each cell $C_i$ ($i=1,\ldots,t$) of the partition $\Pi$, one can build a vector $\pmb{R}_{C_i}$, or abbreviated to $\pmb{R}_i$, to indicate $C_i$, that is called the {\it characteristic vector} of $C_i$, such that the $k$th coordinate ($1\leq k\leq n$) of the vector is 1 if $k$ belongs to $C_i$ otherwise it is 0. By means of  characteristic vectors, we can define the {\it characteristic matrix} $\mathbf{R}_{\Pi}$ of $\Pi$ as $(\pmb{R}_1 \pmb{R}_2 \cdots \pmb{R}_t)$. It is not difficult to verify that a partition $\Pi$ of $[n]$ is equitable if and only if the column space of $\mathbf{R}_{\Pi}$ is $\dAM{G}$-invariant (see \cite{GodRoy} for details).

As well-known, if the partition $\Pi$ involved is equitable, there is a close relation between eigenvalues and eigenvectors of $\mathbf{A}(G)$ and that of $\mathbf{A}(G / \Pi)$. To be precise, $\drm{spec}{ \dAM{G / \Pi} } \subseteq \drm{spec}{ \dAM{G} }$, and if $\pmb{x}_{\lambda}$ is an eigenvector of $\mathbf{A}(G / \Pi)$, corresponding to the eigenvalue $\lambda$, then $\mathbf{R}_{\Pi} \pmb{x}_{\lambda}$ is an eigenvector of $\mathbf{A}(G)$, corresponding to $\lambda$ also, where $\mathbf{R}_{\Pi}$ is the characteristic matrix of $\Pi$. Accordingly, we say that the eigenvector $\pmb{x}_{\lambda}$ of $\mathbf{A}(G / \Pi)$ {\it ``lifts''} to the eigenvector $\mathbf{R}_{\Pi} \pmb{x}_{\lambda}$ of $\mathbf{A}(G)$. Moreover all eigenvectors of $\mathbf{A}(G)$ could be divided into two classes: those that are constant on every cell of $\Pi$ and those that sum to zero on each cell of $\Pi$. As one can readily see, the first class consists of vectors lifted from eigenvectors of $\mathbf{A}(G / \Pi)$. In other words, if $\Pi = \{ C_1,\ldots,C_t \}$ is an equitable partition and  $x$ and $y$ are two vertices of $G$ belonging to the same cell of $\Pi$, then 
\begin{equation}\label{Def-EquitablePartition}
\langle \pmb{e}_x,\dproj{V_\lambda}(\pmb{R}_j) \rangle = \langle \pmb{e}_y,\dproj{V_\lambda}(\pmb{R}_j) \rangle, ~\forall~\lambda\in\mathrm{spec}~\mathbf{A}(G) \mbox{\it ~and } j\in [t], 
\end{equation}
where $\pmb{R}_j$ is the characteristic vector of $C_j$ and the vector $\dproj{V_\lambda}(\pmb{R}_j)$ is the orthogonal projection of $\pmb{R}_j$ onto the eigenspace $V_{\lambda}$. As we shall see below, the relation above is also sufficient for being equitable.

\begin{Lemma}\label{Lemma-EquitablePartProj}
Let $\Pi = \{ C_1,\ldots,C_t \}$ be a partition of $V(G)$. Then $\Pi$ is equitable if and only if for any two vertices $x$ and $y$ belonging to the same cell of $\Pi$, the relation (\ref{Def-EquitablePartition}) holds.
\end{Lemma}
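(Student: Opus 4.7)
The plan is to translate the inner-product condition $(\ref{Def-EquitablePartition})$ into the geometric statement ``$\dproj{V_\lambda}(\pmb{R}_j)$ lies in $U:=\mathrm{col}(\mathbf{R}_\Pi)=\mathrm{span}\{\pmb{R}_1,\ldots,\pmb{R}_t\}$'', and then invoke the characterization already cited from \cite{GodRoy}: $\Pi$ is equitable iff $U$ is $\mathbf{A}(G)$-invariant.

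First, I would note the following reformulation. A vector $\pmb{w}\in\R^n$ is constant on every cell of $\Pi$ if and only if $\pmb{w}\in U$, since the $\pmb{R}_i$ are indicator vectors of the cells and form a basis of $U$. Because $\langle \pmb{e}_x,\pmb{w}\rangle$ is simply the $x$-coordinate of $\pmb{w}$, the hypothesis $(\ref{Def-EquitablePartition})$ is equivalent to: for every eigenvalue $\lambda$ and every $j\in[t]$, the projection $\dproj{V_\lambda}(\pmb{R}_j)$ lies in $U$.

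For the ``$\Leftarrow$'' direction I would argue directly: assuming $\dproj{V_\lambda}(\pmb{R}_j)\in U$ for all $\lambda,j$, the spectral decomposition $\mathbf{A}(G)\pmb{R}_j=\sum_{\lambda\in\mathrm{spec}\,\mathbf{A}(G)}\lambda\,\dproj{V_\lambda}(\pmb{R}_j)$ lies in $U$ as well, so $U$ is $\mathbf{A}(G)$-invariant and hence $\Pi$ is equitable by the cited criterion. For the ``$\Rightarrow$'' direction, suppose $\Pi$ is equitable, so $U$ is $\mathbf{A}(G)$-invariant. Since $\mathbf{A}(G)$ is symmetric, $U^{\perp}$ is $\mathbf{A}(G)$-invariant too, and therefore $\mathbf{A}(G)|_U$ is a self-adjoint operator on $U$ whose eigenspaces are precisely $U\cap V_\lambda$; consequently $U=\bigoplus_{\lambda}(U\cap V_\lambda)$. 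Writing $\pmb{R}_j=\sum_\lambda \pmb{u}_{\lambda,j}$ with $\pmb{u}_{\lambda,j}\in U\cap V_\lambda$ and invoking uniqueness of the orthogonal decomposition $\R^n=\bigoplus_\lambda V_\lambda$, we get $\pmb{u}_{\lambda,j}=\dproj{V_\lambda}(\pmb{R}_j)\in U$, which is the reformulated condition above, and hence $(\ref{Def-EquitablePartition})$ holds.

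There is no serious obstacle: the whole argument hinges on the standard fact that an invariant subspace of a symmetric matrix splits orthogonally along the ambient eigenspaces. The only thing that needs a little care is making explicit that $U$ consists exactly of the vectors that are constant on each cell of $\Pi$, so that ``$\dproj{V_\lambda}(\pmb{R}_j)\in U$'' is genuinely the same statement as the coordinate-equality $(\ref{Def-EquitablePartition})$.
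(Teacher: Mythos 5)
Your proof is correct, and its core coincides with the paper's: both reduce the lemma to the criterion that $\Pi$ is equitable iff the column space $U$ of $\mathbf{R}_\Pi$ is $\mathbf{A}(G)$-invariant, and both prove the sufficiency by expanding $\mathbf{A}(G)\pmb{R}_j=\sum_\lambda \lambda\,\dproj{V_\lambda}(\pmb{R}_j)$ and observing that each projection, being constant on cells, lies in $U$. The only real difference is in the necessity direction: the paper simply refers back to its earlier discussion of how eigenvectors of $\mathbf{A}(G)$ over an equitable partition split into those lifted from $\mathbf{A}(G/\Pi)$ (constant on cells) and those summing to zero on each cell, whereas you give a self-contained argument from the fact that an $\mathbf{A}(G)$-invariant subspace of a symmetric matrix decomposes as $U=\bigoplus_\lambda (U\cap V_\lambda)$, so that the eigenspace components of $\pmb{R}_j$ are exactly $\dproj{V_\lambda}(\pmb{R}_j)$ and stay in $U$. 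Your version makes the necessity explicit and independent of the quotient-graph machinery, at the cost of invoking the invariant-subspace splitting; the paper's version is shorter but leans on the cited lifting classification. Your preliminary reformulation (membership in $U$ equals being constant on every cell, and $\langle\pmb{e}_x,\cdot\rangle$ reads off coordinates) is exactly the bridge the paper uses implicitly, so there is no gap.
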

\begin{proof}[\bf Proof]
We have discussed the necessity of our assertion, so let us show the sufficiency now. Obviously, the vectors $\pmb{R}_1,\ldots,\pmb{R}_t$ comprise an orthogonal basis of $U_\Pi$, which is the column space of $\mathbf{R}_{\Pi}$. To prove $U_\Pi$ is $\mathbf{A}(G)$-invariant, it suffices to show that $\mathbf{A}(G) \pmb{R}_k$ ($1\leq k \leq t$) can be written as a linear combination of $\pmb{R}_1,\ldots,\pmb{R}_t$.

In fact, 
\begin{align*}
\mathbf{A}(G) \pmb{R}_k 
& = \mathbf{A}(G) \left( \sum_{\lambda \hspace{0.2mm} \in \hspace{0.2mm} \mathrm{spec} \hspace{0.2mm} \mathbf{A}(G)} \dproj{V_{\lambda}}( \pmb{R}_k ) \right) \\
& = \sum_{\lambda} \mathbf{A}(G)\dproj{V_{\lambda}}( \pmb{R}_k ) \\
& = \sum_{\lambda} \lambda\cdot\dproj{V_{\lambda}}( \pmb{R}_k ).
\end{align*}
In accordance with our assumption, one can readily see that $\dproj{V_{\lambda}}( \pmb{R}_k )$ can be expressed as a linear combination of $\pmb{R}_1,\ldots,\pmb{R}_t$, so is $\mathbf{A}(G) \pmb{R}_k$.
\end{proof}

Clearly Lemma \ref{Lemma-EquitablePartProj} shows us that if $\Pi$ is an equitable partition and $C$ is a cell of $\Pi$, then the projection $\dproj{V_{\lambda}}( \pmb{R}_C )$ is in the subspace $\mathbf{R}_{\Pi} V_{\lambda}^{ G / \Pi }$, where $V_{\lambda}^{ G / \Pi }$ is the eigenspace of $\dAM{G / \Pi}$ corresponding to $\lambda$, and thus
\begin{align*}
\dproj{V_{\lambda}}( \pmb{R}_C ) 
& = \dproj{ \mathbf{R}_{\Pi} V_{\lambda}^{ G / \Pi } }( \pmb{R}_C )  \\
& = \sum_{c\in C} \dproj{ \mathbf{R}_{\Pi} V_{\lambda}^{ G / \Pi } }( \pmb{e}_c )  \\
& = |C| \cdot \dproj{ \mathbf{R}_{\Pi} V_{\lambda}^{ G / \Pi } }( \pmb{e}_c ).
\end{align*}
On the other hand, $\pmb{R}_C = \sum_{ \lambda \in \drm{spec}{ \dAM{G} } } \dproj{V_{\lambda}}( \pmb{R}_C )$. Therefore,
\begin{equation}\label{Equ-OrbitsAutG-Equitable}
\frac{1}{ |C| } \cdot \pmb{R}_C = 
\sum_{ \lambda \in \drm{spec}{ \dAM{G} } } \dproj{ \mathbf{R}_{\Pi} V_{\lambda}^{ G / \Pi } }( \pmb{e}_c ), ~~~~ \forall c \in C.
\end{equation}
This relation reveals that in order to determine the partition $\Pi_G^*$, we only need to work out those subspaces $\mathbf{R}_{\Pi_G^*} V_{\lambda}^{ G / \Pi_G^* }$ for each $\lambda \in \drm{spec}{ \dAM{G} }$.

Before showing how to obtain $\mathbf{R}_{\Pi_G^*} V_{\lambda}^{ G / \Pi_G^* }$ without knowing the partition  $\Pi_G^*$, we first introduce two kinds of subspaces $V_{\lambda} \langle u \rangle$ and $V_{1}^{\gamma}$ relevant to subgroups of $\drm{Aut}{G}$. For convenience, we use $\mathfrak{G}$ in what follows to denote the permutation group $\drm{Aut}{G}$.
\begin{equation}\label{Def-SubspacePointStabilizer}
V_{\lambda} \langle u \rangle :=
\{ \pmb{v} \in V_{\lambda} \mid \xi \hspace{0.5mm} \pmb{v} = \pmb{v}, ~ \forall \hspace{0.6mm} \xi\in\mathfrak{G}_u  \}, ~~ u\in [n].
\end{equation}
\begin{equation}\label{Def-SubspaceEigenvalue1}
V_{1}^{\gamma} := 
\{ \pmb{v} \in \R^n \mid \gamma \hspace{0.5mm} \pmb{v} = \pmb{v} \}, ~~ \gamma \in \mathfrak{G}.
\end{equation}
Apparently the partition $\Pi_u^*$ composed of orbits of $\mathfrak{G}_u$ is equitable and  $V_{\lambda} \langle u \rangle = \mathbf{R}_{\Pi_u^*} V_{\lambda}^{ G / \Pi_u^* }$. 

One can readily see that there are for any vertex $v$ of $G$ two possibilities: 
\begin{equation}\label{Equ-ActionGonV}
\mbox{either } \sigma v = v \mbox{ or } \sigma v \neq v, ~ \forall\hspace{0.6mm} \sigma \in \mathfrak{G}.
\end{equation}
It is interesting that there might be some subsets of $[n]$ possessing that relation (\ref{Equ-ActionGonV}). Let $B$ be a non-empty subset contained in some orbit $T$ of $\mathfrak{G}$, which is called a {\it block} for $\mathfrak{G}$ if 
$$\mbox{ either }\sigma B = B \mbox{ or } \sigma B \cap B = \emptyset, ~ \forall\hspace{0.6mm} \sigma \in \mathfrak{G}.
$$ 
Evidently, any element $t$ of $T$ and the orbit $T$ itself are blocks for $\mathfrak{G}$. If the group $\mathfrak{G}$ has only two such kinds of blocks in $T$ we say the action of $\mathfrak{G}$ on $T$ is {\it primitive}, otherwise {\it imprimitive.} On the other hand, the family of subsets $\{ \gamma B : \gamma \in \mathfrak{G} \}$ forms a partition of $T$, which is called the {\it system of blocks containing $B$} and denoted by $\mathscr{B}$. The action of $\mathfrak{G}$ on the system $\mathscr{B}$ is said to be {\it regular} if for any $\gamma \in \mathfrak{G}$, the stabilizer $\mathfrak{G}_B$ fixes $\gamma B$. 

Let $B_1,\ldots,B_m$ be a sequence of blocks for $\mathfrak{G}$ such that $B_1 \subsetneq B_2 \subsetneq \cdots \subsetneq B_m \subsetneq B_{m+1} = T$, $B_1$ is a minimal block and $B_i$ is maximal in $B_{i+1}$, {\it i.e.,} there is no block $K$ for $\mathfrak{G}$ so that $B_i \subsetneq K \subsetneq B_{i+1}$, $i=1,\ldots,m$. That kind of sequence is said to be a {\it block family} of $\mathfrak{G}$. Suppose $\mathscr{B}_i$ is the block system of $\mathfrak{G}$ containing $B_{i}$. We call those systems involved a {\it block system family of } $\mathfrak{G}$, which is denoted by $\mathscr{B}_1 \gneq \mathscr{B}_2 \gneq \cdots \gneq \mathscr{B}_m$. Suppose further that $\mathscr{B}_{i_1},\ldots,\mathscr{B}_{i_r}$ are those systems in the family such that the action of $\mathfrak{G}$ on $\mathscr{B}_{i_j}$ ($j = 1,\ldots,r$) is regular and $\gamma_{1},\gamma_{2},\ldots,\gamma_{r}$ are a group of permutations in $\mathfrak{G}$ such that $\gamma_{j} B_{i_j} \neq B_{i_j}$ and $\gamma_{j} B_{i_{j+1}} = B_{i_{j+1}}$.

\begin{Theorem}\label{Thm-Equation-OrbitsOfAutG}
$$
\mathbf{R}_{\Pi^*_G} V_{\lambda}^{ G / \Pi^*_G } 
= 
\left( \bigcap_{t\in T} \mathbf{R}_{\Pi_t^*} V_{\lambda}^{ G / \Pi_t^* } \right) 
\bigcap 
\left( \bigcap_{j=1}^r V_1^{ \gamma_j} \right)_{\textstyle .}
$$
\end{Theorem}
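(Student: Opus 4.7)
Both sides of the identity admit clean fixed-subspace descriptions. The LHS $\mathbf{R}_{\Pi^*_G}V_\lambda^{G/\Pi^*_G}$ coincides with $V_\lambda^{\mathfrak{G}}$, the subspace of $V_\lambda$ pointwise fixed by $\mathfrak{G}=\mathrm{Aut}\,G$; by definition~(\ref{Def-SubspacePointStabilizer}) we have $\mathbf{R}_{\Pi^*_t}V_\lambda^{G/\Pi^*_t}=V_\lambda\langle t\rangle$, the fixed subspace of the stabilizer $\mathfrak{G}_t$; and $V_1^{\gamma_j}$ is the fixed subspace of the single permutation $\gamma_j$. Writing $\mathfrak{H}\leq\mathfrak{G}$ for the subgroup generated by $\bigcup_{t\in T}\mathfrak{G}_t$ together with $\gamma_1,\ldots,\gamma_r$, the RHS is simply $V_\lambda^{\mathfrak{H}}$. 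The inclusion $V_\lambda^{\mathfrak{G}}\subseteq V_\lambda^{\mathfrak{H}}$ comes for free from $\mathfrak{H}\leq\mathfrak{G}$, so the whole content of the theorem is the opposite inclusion, which I plan to obtain from the stronger group-theoretic equality $\mathfrak{H}=\mathfrak{G}$.

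To prove $\mathfrak{H}=\mathfrak{G}$, I will induct along the block chain. Let $K_i=\{\sigma\in\mathfrak{G}:\sigma B_i=B_i\}$; then $K_1\leq K_2\leq\cdots\leq K_{m+1}=\mathfrak{G}$, and taking $B_1=\{t_0\}$ the base case $K_1=\mathfrak{G}_{t_0}\leq\mathfrak{H}$ is immediate. At each inductive step from $K_i$ to $K_{i+1}$, the group $K_{i+1}$ acts transitively on the set $\mathscr{B}_i\cap B_{i+1}$ of $\mathscr{B}_i$-blocks inside $B_{i+1}$ with $B_i$-stabilizer equal to $K_i$, and maximality of $B_i$ in $B_{i+1}$ makes this induced action of $K_{i+1}/K_i$ primitive. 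If level $i$ is non-regular, the classical dichotomy that a primitive non-regular permutation group equals the normal closure of any point stabilizer lets me write $K_{i+1}/K_i$ as generated by the images of $\mathfrak{G}_t$ for $t\in B_{i+1}$, and combined with $K_i\leq\mathfrak{H}$ this yields $K_{i+1}\leq\mathfrak{H}$. If level $i$ is regular, i.e., $i=i_j$ for some $j$, then the primitive action of $K_{i+1}/K_i$ is itself regular, hence cyclic of prime order, so a single non-trivial coset generates it; the element $\gamma_j$, suitably adjusted by elements already captured in $\mathfrak{H}$, supplies this generator.

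The delicate point is the regular sub-case, because $\gamma_j$ is only constrained to lie in $K_{i_{j+1}}$ rather than the smaller $K_{i_j+1}$. When the gap $i_{j+1}-i_j$ exceeds one, $\gamma_j$ itself need not belong to $K_{i_j+1}$ and cannot immediately serve as a generator of the cyclic quotient $K_{i_j+1}/K_{i_j}$. My intended remedy is to sequence the induction so that, by the time one reaches the regular level $i_j$, all intermediate non-regular levels through $i_{j+1}-1$ have been captured in $\mathfrak{H}$ via the non-regular sub-case, and then use those captured elements to modify $\gamma_j$ into a bona fide member of $K_{i_j+1}\setminus K_{i_j}$ lying in $\mathfrak{H}$. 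Verifying that this modification succeeds for every $j$---that each cyclic-prime-order quotient at a regular level admits a generator constructible inside $\mathfrak{H}\cap K_{i_{j+1}}$ from $\gamma_j$ together with the inductively captured stabilizers---is the crux of the proof and where I expect the most care will be required.
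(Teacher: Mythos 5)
Your reduction coincides with the paper's own: both sides are fixed-point subspaces of $V_{\lambda}$ (the left side under $\mathfrak{G}$, the right side under the subgroup $\mathfrak{H}$ generated by the stabilizers $\mathfrak{G}_t$, $t\in T$, together with $\gamma_1,\ldots,\gamma_r$), the inclusion of the left side in the right side is immediate from $\mathfrak{H}\leq\mathfrak{G}$, and everything rests on the group-theoretic identity $\mathfrak{H}=\mathfrak{G}$. The paper's printed proof does exactly this and then disposes of that identity with the single phrase ``it is plain to see that $\mathfrak{G}=\langle\mathfrak{G}_{t_1},\cdots,\mathfrak{G}_{t_k},\gamma_1,\cdots,\gamma_r\rangle$''; no further argument is given. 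So up to that point you are on the paper's route, and your attempt to actually prove the generation statement goes beyond what the paper records.

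That attempt, however, is not yet a proof, and you say so yourself: the regular-level step is left open, and it is precisely the step that carries all the content. Beyond the $B_{i_{j+1}}$-versus-$B_{i_j+1}$ ambiguity you flag, two further points need repair. First, the dichotomy your induction runs on is regularity of the \emph{local} primitive action of $K_{i+1}=\mathfrak{G}_{B_{i+1}}$ on the $\mathscr{B}_i$-blocks inside $B_{i+1}$ (equivalently, whether $K_i$ fixes each of those blocks), whereas the paper supplies a $\gamma_j$ only for levels at which the \emph{global} action of $\mathfrak{G}$ on all of $\mathscr{B}_{i_j}$ is regular in the paper's sense ($\mathfrak{G}_{B_{i_j}}$ fixes every block of the system); these notions need not coincide, so a level may force a prime-cyclic local quotient on you without any $\gamma_j$ being available for it. Second, in the non-regular step the normal closure of the point stabilizer is formed with conjugating elements of $K_{i+1}$ that are not yet known to lie in $\mathfrak{H}$ (and $K_{i+1}/K_i$ is not literally a quotient group, $K_i$ being non-normal in general); this is repairable by carrying through the induction the stronger statement that $\mathfrak{G}_{B'}\leq\mathfrak{H}$ for every block $B'\in\mathscr{B}_i$, whose base case holds because $\mathfrak{H}$ contains $\mathfrak{G}_t$ for every $t\in T$, and by noting that the kernel of the local action is contained in each such $\mathfrak{G}_{B'}$. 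Until the regular levels are genuinely handled, your write-up establishes no more of the theorem than the paper's one-line assertion does.
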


Accordingly, in order to determine the partition $\Pi^*_G$, we only need to have one partition $\Pi^*_t$ and a group of permutations $\gamma_1,\ldots,\gamma_{_{\footnotesize l(T)}}$ in $\mathfrak{G}$, where $t$ is an element of a non-trivial $\mathfrak{G}$ orbit $T$, which is composed of $t_1=t,t_2,\ldots,t_{_{\footnotesize l(T)}}$, and $\gamma_i\hspace{0.4mm} t = t_i$, $i=1,\ldots,l(T)$. Similarly, in order to determine $\Pi^*_t$, we only need to know one partition $\Pi^*_{t,u}$, which is composed of orbits of the stabilizer $\left( \mathfrak{G}_t \right)_u$, abbreviated to $\mathfrak{G}_{t,u}$, and a group of permutations $\delta_1,\ldots,\delta_{l(R)}$ in $\mathfrak{G}_t$ such that $u$ belongs to some non-trivial orbit $R$ of $\mathfrak{G}_t$, which consists of elements $u_1=u,u_2,\ldots,u_{l(R)}$, and $\delta_i\hspace{0.4mm} u = u_i$, $i=1,\ldots,l(R)$. Apparently, we can repeat this process until the partition consisting of orbits of the last  stabilizer, which fixes a sequence of vertices of $G$, is made up of trivial cells only, {\it i.e.,} the final partition is equal to $\{\{v\} : v \in [n] \}$.

We call a sequence of vertices $u_1,\ldots,u_s$ a fastening sequence of $\mathfrak{G}$ if $u_1$ belongs to a non-trivial orbit of $\mathfrak{G}$, $u_i$ belongs to some non-trivial orbit of $\mathfrak{G}_{u_1,\ldots,u_{i-1}}$ ($i=2,\ldots,s$) and $\mathfrak{G}_{u_1,\ldots,u_{s}} = \{ 1 \}$, where $\mathfrak{G}_{u_1,\ldots,u_{i-1}} = \left\{ \gamma \in \mathfrak{G} : \gamma \hspace{0.5mm} u_k = u_k, k = 1,\ldots,i-1 \right\}$. Let $x_1,x_2,\ldots,x_{s}$ and $y_1,y_2,\ldots,y_{s}$ be two fastening sequences of $\mathfrak{G}$. A moment's reflection would show that in order to determine a permutation in $\mathfrak{G}$ mapping $x_1$ to $y_1$, we only need to work out two group of partitions $\Pi^*_{x_1},\Pi^*_{x_1,x_2},\ldots,\Pi^*_{x_1,\ldots,x_{s}}$ and $\Pi^*_{y_1},\Pi^*_{y_1,y_2},\ldots,\Pi^*_{y_1,\ldots,y_{s}}$ and to know the corresponding relation between cells of partitions in each pair $( \Pi^*_{x_1,\ldots,x_k},\Pi^*_{y_1,\ldots,y_k} )$, $k=1,\ldots,s$. 

In the 3rd section, we will show how to work out those partitions and determine the corresponding relation between cells of partitions in each pair. For convenience, we use the term ``{\it information about $\mathfrak{G}$}'' to describe the information about the partition $\Pi_G^*$ and a series of partitions of $[n]$ associated with a fastening sequence of $\mathfrak{G}$.

\subsection{The Partition $\Pi[ \oplus V_{\lambda};v ]$ --- an Approximation to $\Pi^*_v$}

Obviously, all permutations of $[n]$ form a group under composition of maps, which is called the {\it symmetric group of degree} $n$ and denoted by $\drm{Sym}{[n]}$, or by $S_n$ for short. Each permutation $\sigma$ in $S_n$ can act on a vector $\pmb{u}=(u_1,\ldots,u_n)^t$ of $\R^n$ in a natural way: 
\begin{equation}\label{Def-PermutationOperator} \sigma \hspace{0.5mm} \pmb{u}=(u_{\sigma^{-1}1},u_{\sigma^{-1}2},\ldots,u_{\sigma^{-1} n})^t,
\end{equation}
 where $\R^n$ is the $n$-dimensional vector space over the real field $\R$. Accordingly, any permutation $\sigma$ in $S_n$ can be regarded, through the action on vectors, as a linear operator on $\R^n$. We call a (0,1)-square matrix a {\it permutation matrix} if in each row and column there is exactly one entry that is equal to 1. It is easy to check that the matrix $\mathbf{P}_{\sigma}$ of the operator $\sigma$ with respect to the standard basis $\pmb{e}_1,\ldots,\pmb{e}_n$ is a permutation matrix, where each $\pmb{e}_i$ $(i=1,\ldots,n)$ has exactly one non-trivial entry on $i$th coordinate that is equal to 1, and all other entries of $\pmb{e}_i$ are equal to 0.

Recall that the vertex set $V(G)$ is $[n]$, so a bijective map $\phi$ from $V(G)$ to itself is a permutation of $[n]$. It is easy to check that 
\begin{equation}\label{Equation00} 
\phi\mbox{ is an automorphism of }G\mbox{ if and only if }\mathbf{P}_{\phi}^{-1}\dAM{G}\mathbf{P}_{\phi}=\dAM{G},
\end{equation}
which presents an algebraic way of characterizing automorphisms of $G$. There is in virtue of eigenspaces of $\dAM{G}$ another way to characterize  automorphisms of $G$.

\begin{Lemma}\label{Lem-AutomorphismAndOperator} 
Let $G$ be a graph with the vertex set $[n]$ and let $\sigma$ be a permutation in $S_n$. Then $\sigma$ is an automorphism of $G$ if and only if every eigenspace of $\dAM{G}$ is $\sigma$-invariant. 
\end{Lemma}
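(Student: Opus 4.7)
The plan is to use the algebraic characterization (\ref{Equation00}) together with the spectral decomposition $\mathbb{R}^n = \bigoplus_{\lambda \in \mathrm{spec}\,\mathbf{A}(G)} V_{\lambda}$, which is available because $\mathbf{A}(G)$ is real symmetric. The key bridge between the two sides of the equivalence is the identification, established by (\ref{Def-PermutationOperator}), of the permutation $\sigma$ with the linear operator on $\mathbb{R}^n$ whose matrix in the standard basis is the permutation matrix $\mathbf{P}_{\sigma}$. Throughout I will freely switch between ``$\sigma$-invariant'' and ``$\mathbf{P}_{\sigma}$-invariant''.

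For the necessity, I would start from (\ref{Equation00}): if $\sigma \in \mathrm{Aut}\, G$ then $\mathbf{A}(G)\mathbf{P}_{\sigma} = \mathbf{P}_{\sigma}\mathbf{A}(G)$. Taking any $\pmb{v} \in V_{\lambda}$ and applying this commutation relation gives $\mathbf{A}(G)(\mathbf{P}_{\sigma}\pmb{v}) = \mathbf{P}_{\sigma}\mathbf{A}(G)\pmb{v} = \lambda\,\mathbf{P}_{\sigma}\pmb{v}$, so $\mathbf{P}_{\sigma}\pmb{v} \in V_{\lambda}$. Hence $V_{\lambda}$ is $\sigma$-invariant for every $\lambda \in \mathrm{spec}\,\mathbf{A}(G)$. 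This direction is essentially immediate.

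For the sufficiency, suppose every eigenspace $V_{\lambda}$ is $\sigma$-invariant. I would verify (\ref{Equation00}) by showing that the operators $\mathbf{A}(G)\mathbf{P}_{\sigma}$ and $\mathbf{P}_{\sigma}\mathbf{A}(G)$ agree on every vector. Given an arbitrary $\pmb{x} \in \mathbb{R}^n$, I decompose $\pmb{x} = \sum_{\lambda} \dproj{V_{\lambda}}(\pmb{x})$ using the orthogonal decomposition into eigenspaces. Then on the one hand
\begin{equation*}
\mathbf{P}_{\sigma}\mathbf{A}(G)\pmb{x}
= \mathbf{P}_{\sigma}\sum_{\lambda} \lambda\,\dproj{V_{\lambda}}(\pmb{x})
= \sum_{\lambda} \lambda\,\mathbf{P}_{\sigma}\dproj{V_{\lambda}}(\pmb{x}),
\end{equation*}
while on the other hand, since $\mathbf{P}_{\sigma}\dproj{V_{\lambda}}(\pmb{x})$ still lies in $V_{\lambda}$ by the invariance hypothesis,
\begin{equation*}
\mathbf{A}(G)\mathbf{P}_{\sigma}\pmb{x}
= \sum_{\lambda} \mathbf{A}(G)\mathbf{P}_{\sigma}\dproj{V_{\lambda}}(\pmb{x})
= \sum_{\lambda} \lambda\,\mathbf{P}_{\sigma}\dproj{V_{\lambda}}(\pmb{x}).
\end{equation*}
Comparing the two gives $\mathbf{A}(G)\mathbf{P}_{\sigma} = \mathbf{P}_{\sigma}\mathbf{A}(G)$, i.e., $\mathbf{P}_{\sigma}^{-1}\mathbf{A}(G)\mathbf{P}_{\sigma} = \mathbf{A}(G)$, and thus $\sigma$ is an automorphism of $G$ by (\ref{Equation00}).

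There is essentially no hard step here; the main thing to be careful about is the sufficiency, and specifically the use of the orthogonal decomposition, which depends on $\mathbf{A}(G)$ being symmetric (guaranteed because $G$ is simple). The only other point worth flagging is the bookkeeping that $\sigma$-invariance of a subspace truly coincides with $\mathbf{P}_{\sigma}$-invariance — this follows directly from (\ref{Def-PermutationOperator}) but is what lets the spectral argument translate back into a statement about the permutation $\sigma$ itself.
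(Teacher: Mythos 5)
Your proposal is correct and follows essentially the same route as the paper: the necessity is the same immediate commutation argument from (\ref{Equation00}), and your sufficiency via the orthogonal decomposition $\pmb{x}=\sum_{\lambda}\dproj{V_{\lambda}}(\pmb{x})$ is just the paper's argument with an orthonormal eigenvector basis rephrased in terms of eigenspace projections. No gaps; both rest on the spectral theorem for the symmetric matrix $\mathbf{A}(G)$.
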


Recall that the $n$-dimensional vector space $\R^n$ is endowed with the {\it  inner product} $\langle\cdot,\cdot\rangle$ such that $\langle\pmb{u},\pmb{v}\rangle=\pmb{v}^t\pmb{u}=\sum_{i=1}^n u_i\cdot v_i$ for any vectors $\pmb{u}=(u_1,\ldots,u_n)^t$ and $\pmb{v}=(v_1,\ldots,v_n)^t$ in $\R^n$. Since the matrix $\mathbf{A}(G)$ is symmetric, there is an orthonormal basis of $\R^n$ consisting of eigenvectors of $\dAM{G}$ according to the real spectral theorem (see \cite{Axler} for details).

\begin{proof}[\bf Proof]  
We begin with the necessity of the assertion. In accordance with the relation (\ref{Equation00}), $\sigma$ is an automorphism of $G$ if and only if $\mathbf{P}_{\sigma}^t\mathbf{A}\mathbf{P}_{\sigma}=\mathbf{A}$, so for any eigenvector $\pmb{v}$ of $\mathbf{A}$ corresponding to some eigenvalue $\lambda$, $$\mathbf{P}_{\sigma}^t\mathbf{A}\mathbf{P}_{\sigma}\pmb{v}=\mathbf{A}\pmb{v}=\lambda\pmb{v}.$$
Consequently, $\mathbf{A}\mathbf{P}_{\sigma}\pmb{v}=\lambda\mathbf{P}_{\sigma}\pmb{v}$, which means $\mathbf{P}_{\sigma}\pmb{v}$ is also an eigenvector of $\mathbf{A}$ corresponding to $\lambda$, and thus every eigenspace of $\mathbf{A}$ is $\mathbf{P}_{\sigma}$-invariant.

Conversely, let us select an orthonormal basis $\pmb{x}_1,\ldots,\pmb{x}_n$ of $\R^n$, consisting of eigenvectors of $\mathbf{A}$ such that $\mathbf{A}\pmb{x}_i = \lambda_i\pmb{x}_i$, $i=1,\ldots,n$. Since every eigenspace of $\mathbf{A}$ is $\mathbf{P}_{\sigma}$-invariant, for every $\pmb{x}_i$ we have
$$
\mathbf{A}\mathbf{P}_{\sigma}\pmb{x}_i
 = \lambda_i\mathbf{P}_{\sigma}\pmb{x}_i
 = \mathbf{P}_{\sigma} \lambda_i\pmb{x}_i
 =\mathbf{P}_{\sigma}\mathbf{A}\pmb{x}_i.
$$
Consequently, for an arbitrary vector $\pmb{v}=\sum_{i=1}^na_i\pmb{x}_i$ in $\R^n$,
$$
\mathbf{P}_{\sigma}\mathbf{A}\pmb{v}=\mathbf{P}_{\sigma}\mathbf{A}\sum_{i=1}^na_i\pmb{x}_i
=\sum_{i=1}^na_i\mathbf{P}_{\sigma}\mathbf{A}\pmb{x}_i=
\sum_{i=1}^na_i\mathbf{A}\mathbf{P}_{\sigma}\pmb{x}_i
=\mathbf{A}\mathbf{P}_{\sigma}\sum_{i=1}^na_i\pmb{x}_i
=\mathbf{A}\mathbf{P}_{\sigma}\pmb{v}.
$$
As a result, $\mathbf{P}_{\sigma}\mathbf{A}=\mathbf{A}\mathbf{P}_{\sigma}$, and thus the permutation $\sigma$ is an automorphism of $G$.
\end{proof}

In accordance with Lemma \ref{Lem-AutomorphismAndOperator}, we can describe automorphisms of $G$ and so the group $\drm{Aut}{G}$ in terms of eigenspaces of $\mathbf{A}(G)$. Let $U$ be a non-trivial subspace in $\R^n$. Set 
$$
\drm{Aut}{U} = \{ \sigma \in S_n : \sigma U = U \}.
$$
Then
\begin{equation}\label{Equ-AutG-EigSpace}
\drm{Aut}{G} = \bigcap_{\lambda \hspace{0.5mm} \in  \hspace{0.5mm} \drm{spec}{\dAM{G}}} \drm{Aut}{V_{\lambda}}.
\end{equation}
For convenience, we denote the right hand side of the equation above by $\drm{Aut}{\oplus V_{\lambda}}$. The relation (\ref{Equ-AutG-EigSpace}) shows us that each eigenspace uncovers some information useful in characterizing the 
 $\drm{Aut}{G}$ action on $[n]$. As we have seen, the family of partitions $\{ \Pi^*_v : v \in [n] \}$ is critical in determining the partition $\Pi_G^*$, so let us see how to gather information about the partition $\Pi^*_v(\lambda)$ of $[n]$, which is composed of orbits of $(\drm{Aut}{V_{\lambda}})_v$, where $\lambda \in \drm{spec}{\dAM{G}}$.
 
Recall that a linear operator $\mathcal{T}$ on $\R^n$ is said to be an {\it isometry} if $\| \mathcal{T} \pmb{v} \| = \| \pmb{v} \|$ for any vector $\pmb{v}$ in $\R^n$. It is easy to check that a permutation on $[n]$ is an isometry on $\R^n$.

\begin{Lemma}\label{ProjOperatorCommutative} 
Let $\mathcal{T}$ be an isometry on $\mathbb{R}^n $, and let $U$ be a subspace of  $\mathbb{R}^n $. Then the following statements are equivalent. 
\begin{enumerate}
    \item $U$ is $\mathcal{T}$-invariant.
    \item $\mathcal{T}\circ\dproj{U} =\dproj{U}\circ\mathcal{T}$, where $\dproj{U}$ is the orthogonal projection onto the subspace $U$. 
    \item There exists a basis $\pmb{b}_1,\ldots,\pmb{b}_n$ of $\R^n$ so that $\mathcal{T}\circ\dproj{U}(\pmb{b}_i) =\dproj{U}\circ\mathcal{T}(\pmb{b}_i)$, $i=1,\ldots,n$. 
\end{enumerate}
\end{Lemma}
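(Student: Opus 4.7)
The plan is to prove the equivalence by the cyclic chain (i) $\Rightarrow$ (ii) $\Rightarrow$ (iii) $\Rightarrow$ (i). Because (ii) and (iii) differ only by whether the identity is asserted on all of $\R^n$ or on a basis, the genuine content lives in (i) $\Leftrightarrow$ (ii); the rest will follow from linearity.

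For (i) $\Rightarrow$ (ii), I would begin by noting that an isometry on the finite-dimensional space $\R^n$ is automatically injective, hence bijective, and the restriction $\mathcal{T}|_U$ is an injective linear map $U\to U$, so a dimension count gives $\mathcal{T}(U)=U$. The key subsequent step is to show that $U^{\perp}$ is also $\mathcal{T}$-invariant. For this I would pick $w\in U^{\perp}$ and an arbitrary $u\in U$, write $u=\mathcal{T}u'$ with $u'\in U$, and use the fact that an isometry preserves the inner product (via polarization) to compute
\[
\langle \mathcal{T}w,\,u\rangle \;=\; \langle \mathcal{T}w,\,\mathcal{T}u'\rangle \;=\; \langle w,\,u'\rangle \;=\; 0,
\]
so $\mathcal{T}w\in U^{\perp}$. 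With both $U$ and $U^{\perp}$ being $\mathcal{T}$-invariant, the decomposition $\R^n=U\oplus U^{\perp}$ immediately yields $\mathcal{T}\circ\dproj{U}=\dproj{U}\circ\mathcal{T}$: for $v=u+w$ in this decomposition, both sides return $\mathcal{T}u$.

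The implication (ii) $\Rightarrow$ (iii) is immediate by choosing $\pmb{b}_i=\pmb{e}_i$. For (iii) $\Rightarrow$ (i) I would observe that $\mathcal{T}\circ\dproj{U}$ and $\dproj{U}\circ\mathcal{T}$ are linear operators that coincide on a basis, hence coincide on all of $\R^n$; and then, for any $u\in U$, $\dproj{U}u=u$ gives
\[
\mathcal{T}u \;=\; \mathcal{T}\circ\dproj{U}(u) \;=\; \dproj{U}\circ\mathcal{T}(u)\;\in\;U,
\]
so $U$ is $\mathcal{T}$-invariant.

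The main obstacle is the $U^{\perp}$-invariance step in (i) $\Rightarrow$ (ii). Although the computation above is short, it relies on the surjectivity of $\mathcal{T}|_U$, which in turn uses both the isometry hypothesis and the finite-dimensionality of $\R^n$; without surjectivity one could only conclude $\mathcal{T}(U^{\perp})\subseteq \mathcal{T}(U)^{\perp}$, which is not what we need. This is precisely the point at which the hypothesis that $\mathcal{T}$ be an isometry (rather than a mere linear map stabilizing $U$) is essential.
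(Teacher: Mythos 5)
Your proof is correct and follows essentially the same route as the paper's: (i) $\Rightarrow$ (ii) via the decomposition $\R^n = U \oplus U^{\perp}$, the trivial step (ii) $\Rightarrow$ (iii), and (iii) $\Rightarrow$ (i) by extending the commutation relation from a basis by linearity and using $\dproj{U}(\pmb{u})=\pmb{u}$ for $\pmb{u}\in U$. The only difference is that you spell out the $U^{\perp}$-invariance step (surjectivity of $\mathcal{T}|_U$ by dimension count plus preservation of inner products), which the paper asserts in one line without detail.
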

\begin{proof}[\bf Proof]  We first verify that {\it i)}$\Rightarrow${\it ii)}. Let $\pmb{v}$ be a vector of $\R^n$. Then there exist uniquely $\pmb{u}\in U$ and $\pmb{u}'\in U^{\bot}$ so that $\pmb{v}=\pmb{u}+\pmb{u}'$. Consequently,   $\mathcal{T}\circ\dproj{U}(\pmb{v})=\mathcal{T}\circ\dproj{U}(\pmb{u}+\pmb{u}')=\mathcal{T}(\pmb{u})=\dproj{U}(\mathcal{T}(\pmb{u})+\mathcal{T}(\pmb{u}'))=\dproj{U}\circ\mathcal{T}(\pmb{v})$ since $\mathcal{T}$ is an isometry and $U$ is an $\mathcal{T}$-invariant subspace.

Clearly, the 2nd statement can imply the 3rd one. So now we turn to the last part and show that the 3rd statement implies the 1st one.

Let us first recall a fact that 
\begin{equation*} \mathcal{T}U=U \mbox{ if and only if }\mathcal{T}(\pmb{u}) = \dproj{U}\circ\mathcal{T}(\pmb{u}), ~\forall \pmb{u}\in U. \end{equation*}  
Since $\pmb{b}_1,\ldots,\pmb{b}_n $ is a basis of $\mathbb{R}^n$, for any vector $\pmb{u}\in U$, $\pmb{u}=\sum_{i=1}^n u_i\pmb{b}_i$ where $u_i\in\R$ and $i=1,\ldots,n$. In accordance with the 3rd statement, we have
\begin{align*} \dproj{U}\circ\mathcal{T}(\pmb{u}) 
&= \sum_{i=1}^n u_i\cdot\dproj{U}\circ\mathcal{T}(\pmb{b}_i) 
= \sum_{i=1}^n u_i\cdot\mathcal{T}\circ\dproj{U}(\pmb{b}_i) \\
&= \mathcal{T}\circ\dproj{U}\left(\sum_{i=1}^n u_i\pmb{b}_i\right)
= \mathcal{T}\left(\pmb{u}\right). \end{align*}
\end{proof} 
 
In accordance with Lemma \ref{ProjOperatorCommutative}, $(\drm{Aut}{V_{\lambda}})_v = \{ \sigma \in \drm{Aut}{V_{\lambda}} : \sigma \hspace{0.5mm} {\scriptstyle \circ} \hspace{0.5mm} \dproj{V_{\lambda}}(\pmb{e}_v) = \dproj{V_{\lambda}}(\pmb{e}_v) \}$. Hence, one can easily obtain a partition of $[n]$ relevant to $\Pi^*_v(\lambda)$, which is induced by coordinates of the vector $\dproj{V_{\lambda}}(\pmb{e}_v)$, {\it i.e.,} two vertices belong to one cell of the partition if the coordinates corresponding to them are the same. As a matter of fact, we can work out a refined partition more close to $\Pi^*_v(\lambda)$ in virtue of a geometric feature of $\drm{Aut}{V_{\lambda}}$ --- region.    

Let $X$ be a subspace of $\R^n$. As we shall see below, a region of $X$ can  be defined in two ways --- outside or inside. Let us first define region outside. Suppose $\pmb{b}_1,\ldots,\pmb{b}_m$ is a group of vectors in $\R^n$ such that $\drm{span}{\{ \pmb{b}_1,\ldots,\pmb{b}_m \}} = \R^n$ and $\| \pmb{b}_i \| > 0$ ($i=1,\ldots,m$) where $\drm{span}{\{ \pmb{b}_i : i \in [m] \}}$ stands for the subspace spanned by $\pmb{b}_1,\ldots,\pmb{b}_m$. Clearly, for each member $\pmb{b}_i$ in the group, there is one unique subspace of dimension $n-1$, which is the orthogonal complement, denoted by $\pmb{b}_i^{ \perp }$, of the vector $\pmb{b}_i$ and called the {\it divider of $\R^n$ associated with $\pmb{b}_i$}. Then the whole space $\R^n$ is divided into 3 parts by $\pmb{b}_i^{ \perp }$:
\begin{enumerate}

\item those vectors each of which has a positive inner product with $\pmb{b}_i$, so we denote the part by $\pmb{b}_i^+$;

\item those vectors each of which has a negative inner product with $\pmb{b}_i$, so we denote the part by $\pmb{b}_i^-$;

\item those vectors each of which is orthogonal to $\pmb{b}_i$, so we denote the part by $\pmb{b}_i^{ \perp }$.

\end{enumerate}
By using all dividers $\pmb{b}_i^{\perp},\ldots,\pmb{b}_m^{\perp}$ of $\R^n$, the whole space can be divided into many parts of two classes: those each of which is comprised of vectors not orthogonal to any vector in the group $\{ \pmb{b}_i : i \in [m] \}$, and those each of which is contained in some divider. In order to investigate those parts contained in a divider $\pmb{b}_i^{ \perp }$, we focus on dividers of the subspace $\pmb{b}_i^{ \perp }$ associated with vectors $\left\{ \dproj{ \pmb{b}_i^{ \perp } }( \pmb{b}_k ) : k \in [m]\setminus\{i\} \right\}$. In this way, we divide the whole space $\R^n$ into parts such that any two of them have only trivial intersection $\{ \pmb{0} \}$, each part resulted is called a {\it region} of $\R^n$ with respect to $\{ \pmb{b}_i : i \in [m] \}$.

Because $X$ is a subspace of $\mathbb{R}^n$, it is naturally divided into a number of parts by those dividers $\{ \pmb{b}_i^{\perp} : i \in [m] \}$, each of which is called a {\itshape region} of $X$. More precisely, a region of $X$ is obtained by intersecting $X$ with some region of $\R^n$. A moment's reflection shows that any region of $X$ is convex.

Now let us try to carve up $X$ inside, that shows another way of defining region. First, we figure out those orthogonal projections of the group $\pmb{b}_1,\ldots,\pmb{b}_m$ onto $X$, which are denoted by $\pmb{x}_1,\ldots,\pmb{x}_m$. Because $\drm{span}{\{ \pmb{b}_i : i \in [m] \}} = \R^n$, $\drm{span}{\{ \pmb{x}_i : i \in [m] \}} = X$.  It is clear that each vector $\pmb{x}_i$ in the group such that $\| \pmb{x}_i \| \neq 0$ possesses uniquely one orthogonal complement in $X$, which is denoted by $\pmb{x}_i^{\perp}$ and called the {\itshape divider of $X$ associated with} $\pmb{x}_i$. Then those dividers $\pmb{x}_1^{\perp},\ldots,\pmb{x}_m^{\perp}$ carve up $X$ into a number of parts, and again each part is called a region of $X$. One can readily see that those two ways of defining region are equivalent.

Although a region contains lots of vectors, we can always use one vector to indicate the region. Let us present several notions relevant step by step. We first consider a region $R$ which is not contained in any divider $\pmb{b}_k^{\perp}$ ($k = 1,\ldots,m$). A non-trivial vector $\pmb{s}_i$ of some divider $\pmb{b}_i^{\perp}$ is said to be a {\it straightforward projection} of $R$ if there exists $\pmb{r} \in R$ so that $\pmb{s}_i = \dproj{ \pmb{b}_i^{\perp} }( \pmb{r} )$ and $\theta ( \pmb{r} - \pmb{s}_i ) + \pmb{s}_i \in R$, $\forall \theta \in (0,1)$. We call a divider $\pmb{b}_i^{\perp}$ of $\R^n$ a {\it separator} of $R$ if the subspace 
$$
\drm{span}{\{ \pmb{s}_i \in \pmb{b}_i^{\perp} : \pmb{s}_i \mbox{ is a straightforward projection of } R \}} = \pmb{b}_i^{\perp}. 
$$
The {\itshape incidence set} of $R$, which is denoted by $\mathcal{I}_R$, is defined as follows:
$$
\mathcal{I}_R = \{ i \in [m] : \pmb{b}_i^{\perp} \mbox{ is a separator of } R \}.
$$
Furthermore, we define a sign function on the group $\{ \pmb{b}_i : i \in [m] \}$ related to $R$:
$$
\signdu{\pmb{b}_i}{R} = \left\{ 
\begin{matrix}
1  & \mbox{ if } \pmb{b}_i^{\perp} \mbox{ is a separator of } R & \hspace{-2mm} \mbox{ and } R \subseteq \pmb{b}_i^{+}; \\
-1 & \mbox{ if } \pmb{b}_i^{\perp} \mbox{ is a separator of } R & \hspace{-2mm} \mbox{ and } R \subseteq \pmb{b}_i^{-}; \\
0  & \hspace{2mm} \mbox{ otherwise. } & ~
\end{matrix}
\right.
$$
We are now ready to introduce the {\it indicator of $R$}, which is  
$$
\mbox{the vector } \sum_{ x\in\mathcal{I}_R } \frac{\signdu{\pmb{b}_x}{R}}{\|\pmb{b}_x\|} \cdot \pmb{b}_x \mbox{ and denoted by } \pmb{i}_R.
$$ 
It is easy to see that the indicator $\pmb{i}_R$ is contained in $R$.

Note that we assumed that the region $R$ considered is not contained in any divider involved, but it could be the case that there are some of dividers, $\pmb{b}^{\perp}_{ k_1 },\ldots,\pmb{b}^{\perp}_{ k_q }$, say, such that $R \subseteq \cap_{i\in [q]} \pmb{b}^{\perp}_{ k_i }$. Accordingly we should focus on the division of the subspace $X = \cap_{i\in [q]} \pmb{b}^{\perp}_{ k_i }$ with respect dividers 
$\big\{ \pmb{p}_j^{\perp} : j \in [m] \setminus \{ k_1,\ldots,k_q \} \big\},$
where $\pmb{p}_j = \dproj{ X }( \pmb{b}_j )$. Then we could define those four notions relevant in a slightly different way. More precisely,  a non-trivial vector $\pmb{s}_j$ of some divider $\pmb{p}_j^{\perp}$ is said to be a  straightforward projection of $R$ if there exists $\pmb{r} \in R$ so that $\pmb{s}_j = \dproj{ \pmb{p}_j^{\perp} }( \pmb{r} )$ and $\theta ( \pmb{r} - \pmb{s}_j ) + \pmb{s}_j \in R$, $\forall \theta \in (0,1)$. We call a divider $\pmb{p}_j^{\perp}$ of $X$ a separator of $R$ if the subspace 
$$
\drm{span}{\{ \pmb{s}_j \in \pmb{p}_j^{\perp} : \pmb{s}_j \mbox{ is a straightforward projection of } R \}} = \pmb{p}_j^{\perp}.
$$
The incidence set $\mathcal{I}_R$ of $R$ is defined as 
$\{ j \in [m] \setminus \{ k_1,\ldots,k_q \} : \pmb{p}_j^{\perp} \mbox{ is a separator of } R \}$, and the  sign function related to $R$ is defined as follows:
$$
\signdu{\pmb{p}_j}{R} = \left\{ 
\begin{matrix}
1  & \mbox{ if } \pmb{p}_j^{\perp} \mbox{ is a separator of } R & \hspace{-2mm} \mbox{ and }  R \subseteq \pmb{p}_j^{+}; \\
-1 & \mbox{ if } \pmb{p}_j^{\perp} \mbox{ is a separator of } R & \hspace{-2mm} \mbox{ and }  R \subseteq \pmb{p}_j^{-}; \\
0  & \hspace{2mm} \mbox{ otherwise. } & ~
\end{matrix}
\right.
$$
Finally the indicator of $R$ is the vector 
$\pmb{i}_R = \sum_{ x\in\mathcal{I}_R } \big( \signdu{\pmb{p}_x}{R}/ \| \pmb{p}_x \| \big) \cdot \pmb{p}_x$. It is easy to see that for any region $R$ of $\R^n$ with respect to $\{\pmb{b}_i : i \in [m] \}$, the key to identifying $R$ is to determine the incidence set $\mathcal{I}_R$.

In the case that $\dim X = 1$, there are essentially two regions in $X$. Suppose $\pmb{x}$ is a vector in $X$ of length 1. Then the region $R$ containing $\pmb{x}$ degenerates into the set $\{ r\cdot\pmb{x} : r \in \R^+ \}$ and another region is $\{ r\cdot( -\pmb{x} ) : r \in \R^+ \}$, so we can use $\pmb{x}$ and $-\pmb{x}$ to indicate those two regions, and thus we do not need separators or the incidence set of $R$ to distinguish it from another region. 

It is the division of $V_{\lambda}$ ($\lambda\in\drm{spec}{\dAM{G}}$) carved by the orthogonal projections of the standard basis (OPSB) $\left\{ \dproj{ V_{\lambda} }(\pmb{e}_v) : v \in [n] \right\}$ or by some of them that we are particularly interested in, because one region of $V_{\lambda}$ with respect to the OPSB is an elementary unit illustrating the action of $\drm{Aut}{V_{\lambda}}$ on $[n]$. 

Evidently for any member $\dproj{ V_{\lambda} }(\pmb{e}_v)$ in the OPSB, there is a region of $V_{\lambda}$ containing the projection. A moment's reflection would show that the subgroup $( \drm{Aut}{V_{\lambda}} )_v$ does not move the region containing $\dproj{ V_{\lambda} }(\pmb{e}_v)$, so the incidence set of the region must be an union of some of orbits of $( \drm{Aut}{V_{\lambda}} )_v$. Consequently, by carving up $V_{\lambda}$ layer by layer with regions containing $\dproj{ V_{\lambda} }(\pmb{e}_v)$, we can obtain a partition of $[n]$ each cell of which is composed of the incidence set of the region relevant.

Let us see how to determine the incidence set of a region. Suppose $X$ is a subspace of $\R^n$ with dimension larger than 2. Clearly, $\drm{span}{\{ \dproj{X}( \pmb{e}_v ) : v \in V(G) \}} = X$. Suppose $\pmb{x}$ is a vector in $X$ such that if $\pmb{e}_v \perp \pmb{x}$ then $\pmb{e}_v \perp X$  ($v \in V(G)$) and $R$ is a region of $X$ with respect to $\{ \dproj{X}( \pmb{e}_v ) : v \in V(G)$ and $\dproj{X}( \pmb{e}_v ) \neq\pmb{0} \}$, which contains $\pmb{x}$. It is not difficult to see that a vertex $v$ of $G$ belongs to the incidence set $\mathcal{I}_R$ if and only if $\exists \hspace{0.6mm} \pmb{q}_v \in \pmb{p}_v^{\perp}$, where $\pmb{p}_v = \dproj{ X }( \pmb{e}_v ) \neq\pmb{0}$, {\it s.t.,} 
\begin{equation}\label{Equ-IncidenceSetRegion}
\dsgn{x} - \dsgn{q}_v = \left( \drm{sgn}{ \langle \pmb{x} \rangle_v } \right) \cdot\pmb{e}_v,
\end{equation}
{\it i.e.,}
$$
\langle \dsgn{x} - \dsgn{q}_v \rangle_i =
\left\{ 
\begin{matrix}
\drm{sgn}{ \langle \pmb{x} \rangle_v } & \mbox{if } i =    v, \\ 
0                                      & \mbox{if } i \neq v,
\end{matrix}
\right.
$$
where $\dsgn{x}$ is the sign vector associated with the vector $\pmb{x} = (x_1,\ldots,x_n)^t$, which is defined as $(\drm{sgn}{x_1},\ldots,$ $\drm{sgn}{x_n})^t$, and $\langle \pmb{x} \rangle_v = x_v$. The key to seeing the relation (\ref{Equ-IncidenceSetRegion}) is to note that every region is convex.

\begin{Lemma}\label{Lem-IncidenceSetRegion}
Let $X$ is subspace of $\R^n$ of dimension larger than 2 and let $\pmb{x}$ be a vector of $X$ not orthogonal to any non-trivial projection $\pmb{p}_i = \dproj{ X }( \pmb{e}_i )$, $i \in [n]$. Suppose $R$ is a region of $X$ with respect to $\{ \pmb{p}_i : i \in [n]$ and $\pmb{p}_i \neq\pmb{0} \}$, which contains $\pmb{x}$. Then $v$ belongs to $\mathcal{I}_R$ if and only if  
$\pmb{sgn}\hspace{0.6mm} \dproj{X}(\pmb{s}_v) = \pmb{s}_v$, where $\pmb{s}_v = \dsgn{x} - \drm{sgn}{ \langle \pmb{x} \rangle_v } \cdot\pmb{e}_v$.
\end{Lemma}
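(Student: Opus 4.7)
The plan is to leverage the characterization immediately preceding the lemma, namely relation (\ref{Equ-IncidenceSetRegion}), which asserts that $v \in \mathcal{I}_R$ if and only if there exists $\pmb{q}_v \in \pmb{p}_v^{\perp}$ with $\dsgn{\pmb{x}} - \dsgn{\pmb{q}_v} = \drm{sgn}(\langle \pmb{x}\rangle_v) \cdot \pmb{e}_v$, i.e., $\dsgn{\pmb{q}_v} = \pmb{s}_v$. The task then reduces to showing that this existence statement is equivalent to the explicit condition $\dsgn{\dproj{X}(\pmb{s}_v)} = \pmb{s}_v$ involving the distinguished witness $\dproj{X}(\pmb{s}_v)$.

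The easy direction is sufficiency. Assume $\dsgn{\dproj{X}(\pmb{s}_v)} = \pmb{s}_v$ and set $\pmb{q}_v := \dproj{X}(\pmb{s}_v)$. Because $\pmb{x}$ is not orthogonal to $\pmb{p}_v$, we have $\langle \pmb{x}\rangle_v \neq 0$, so the $v$-th component of $\pmb{s}_v$ is zero by construction. The sign hypothesis then forces $\langle \pmb{q}_v\rangle_v = 0$, and since $\pmb{q}_v \in X$, this is equivalent to $\langle \pmb{q}_v, \pmb{p}_v\rangle = \langle \pmb{q}_v, \pmb{e}_v\rangle = 0$; hence $\pmb{q}_v \in \pmb{p}_v^{\perp}$. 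The sign vector of $\pmb{q}_v$ is exactly $\pmb{s}_v$, discharging the hypothesis of (\ref{Equ-IncidenceSetRegion}), so $v \in \mathcal{I}_R$.

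For the necessity direction, the key analytic tool I would use is the identity
\[
\langle \dproj{X}(\pmb{s}_v), \pmb{e}_w\rangle = \langle \pmb{s}_v, \dproj{X}(\pmb{e}_w)\rangle = \langle \pmb{s}_v, \pmb{p}_w\rangle, \quad w \in [n],
\]
so that verifying $\dsgn{\dproj{X}(\pmb{s}_v)} = \pmb{s}_v$ becomes the task of checking, coordinate by coordinate, that $\drm{sgn}\langle \pmb{s}_v, \pmb{p}_w\rangle = \pmb{s}_v(w)$. For $w = v$ the required vanishing follows because any $\pmb{q}_v \in \pmb{p}_v^{\perp}$ with $\dsgn{\pmb{q}_v} = \pmb{s}_v$ carries the information that the affine slice $X \cap \pmb{p}_v^{\perp}$ meets the sign-class $\{\pmb{y} \in X : \dsgn{\pmb{y}} = \pmb{s}_v\}$. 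For $w \neq v$, I would exploit convexity of the region $R$ (emphasized by the author as the crucial feature) together with the fact that $R$ itself is separated from its mirror across $\pmb{p}_v^{\perp}$ exactly by the wall on which vectors have sign pattern $\pmb{s}_v$; this should allow passing from the single witness $\pmb{q}_v$ to a full-dimensional convex region of witnesses, from which the specific projected vector $\dproj{X}(\pmb{s}_v)$ inherits the correct sign pattern.

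The main obstacle will be precisely this last step: converting the existence of an arbitrary $\pmb{q}_v \in \pmb{p}_v^{\perp}$ with the right sign pattern into a sign statement about the distinguished vector $\dproj{X}(\pmb{s}_v)$. One approach I would try is to show that the sign-class $S := \{\pmb{y} \in X : \dsgn{\pmb{y}} = \pmb{s}_v\}$ is an open convex subset of $X \cap \pmb{p}_v^{\perp}$, describe $\dproj{X}(\pmb{s}_v)$ variationally as the minimizer of $\|\pmb{s}_v - \pmb{y}\|$ over $\pmb{y} \in X$, and argue that this minimizer must land in $S$ whenever $S \neq \emptyset$; failing a clean convex-analytic argument, a fallback is to write $\dproj{X}(\pmb{s}_v)$ as a convex-like combination involving $\pmb{q}_v$ and nearby sign-preserving perturbations and verify coordinate signs by direct computation using the identity above.
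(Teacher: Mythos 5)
Your sufficiency argument is complete and is essentially the paper's: set $\pmb{q}_v=\dproj{X}(\pmb{s}_v)$, note that the sign hypothesis forces $\langle\dproj{X}(\pmb{s}_v)\rangle_v=0$ so that $\pmb{q}_v\in\pmb{p}_v^{\perp}$, and then $\pmb{q}_v$ witnesses relation (\ref{Equ-IncidenceSetRegion}). The necessity direction, however, is a plan rather than a proof, and the step you yourself call ``the main obstacle'' --- passing from the existence of \emph{some} $\pmb{q}_v\in\pmb{p}_v^{\perp}$ with $\dsgn{q}_v=\pmb{s}_v$ to the sign statement about the \emph{particular} vector $\dproj{X}(\pmb{s}_v)$ --- is the entire content of that direction. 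The paper closes it with a single observation: for any $\pmb{u}\in X$ one has $\pmb{sgn}\hspace{0.6mm}\dproj{X}(\dsgn{u})=\dsgn{u}$ (argued via the region of $\R^n$ cut out by the standard-basis dividers that contains $\pmb{u}$); applying this to $\pmb{u}=\pmb{q}_v$, whose sign vector is $\pmb{s}_v$, gives $\pmb{sgn}\hspace{0.6mm}\dproj{X}(\pmb{s}_v)=\pmb{s}_v$ at once. Nothing in your sketch substitutes for this; even the $w=v$ coordinate (the vanishing of $\langle\dproj{X}(\pmb{s}_v)\rangle_v$) is not actually derived from your ``affine slice meets the sign-class'' remark.

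Worse, the route you propose to try first --- that the minimizer of $\|\pmb{s}_v-\pmb{y}\|$ over $\pmb{y}\in X$ ``must land in the sign class $S$ whenever $S\neq\emptyset$'' --- is false as a general statement about subspaces, so convexity of $S$ alone cannot finish the argument. For instance, take $\pmb{n}=(0.92,\,0.38,\,-d)$ with $d=\sqrt{1-0.92^2-0.38^2}\approx 0.096$ and $X=\pmb{n}^{\perp}\subseteq\R^3$ (pad with extra coordinates if you want $\dim X>2$); then $(1,\,1,\,1.3/d)\in X$ has sign vector $(1,1,1)$, yet $\dproj{X}\big((1,1,1)^t\big)=(1,1,1)^t-(1.3-d)\pmb{n}$ has first coordinate $1-0.92\,(1.3-d)\approx-0.11<0$, so the projection of the sign vector leaves the orthant that $X$ meets. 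This is exactly the shape of the claim the paper's observation asserts, which shows the sign-preservation step is not a generic convex-analytic fact: any honest completion of the necessity direction has to exploit the specific structure available in the lemma (the region $R$, its separators and straightforward projections, and the configuration $\{\pmb{p}_i\}$), and neither your variational argument nor the unexecuted ``convex-like combination'' fallback does so. As it stands, the backward implication in your proposal is unproven.
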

\begin{proof}[\bf Proof]
We first present a simple observation. If $\pmb{u}$ is a vector of $X$, then 
\begin{equation*}
\pmb{sgn}\hspace{0.6mm} \dproj{X}( \dsgn{u} ) = \dsgn{u}.
\end{equation*}
Note that $\dsgn{u}$ is actually the indicator of one region $R^0_{\pmb{u}}$ of $\R^n$ carved up by dividers associated with the standard basis. Then $\pmb{u} \in X$ implies that $X \cap R^0_{\pmb{u}} \supsetneq \{ \pmb{0} \}$, and thus $\dproj{X}( \dsgn{u} ) \in R^0_{\pmb{u}}$. Hence $\pmb{sgn}\hspace{0.6mm} \dproj{X}( \dsgn{u} ) = \dsgn{u}$, for $\dsgn{z}' = \dsgn{z}''$ $\forall \pmb{z}',\pmb{z}'' \in R^0$, where $R^0$ is a region of $\R^n$.

Suppose $v \in \mathcal{I}_R$. Then $\exists \hspace{0.6mm} \pmb{q}_v \in \pmb{p}_v^{\perp}$ {\it s.t.,} 
$\dsgn{x} - \dsgn{q}_v = \drm{sgn}{ \langle \pmb{x} \rangle_v } \cdot\pmb{e}_v$, which implies that
$\dsgn{x} - \drm{sgn}{ \langle \pmb{x} \rangle_v } \cdot\pmb{e}_v = \dsgn{q}_v$, {\it i.e.,} 
$\pmb{s}_v = \dsgn{q}_v$. Therefore
$$
\pmb{sgn}\hspace{0.6mm} \dproj{X}(\pmb{s}_v) = 
\pmb{sgn}\hspace{0.6mm} \dproj{X}(\dsgn{q}_v) =
\dsgn{q}_v = \pmb{s}_v.
$$

\vspace{2mm}
On the other hand, because $\pmb{s}_v = \pmb{sgn}\hspace{0.6mm} \dproj{X}(\pmb{s}_v)$ and $\pmb{s}_v = \dsgn{x} - \drm{sgn}{ \langle \pmb{x} \rangle_v } \cdot\pmb{e}_v$, it is sufficient to show that $\dproj{X}(\pmb{s}_v) \in \pmb{p}_v^{\perp}$, which then implies that $\dproj{X}(\pmb{s}_v)$ is the vector $\pmb{q}_v$ we want in the relation (\ref{Equ-IncidenceSetRegion}).

Note that 
$\pmb{sgn}\hspace{0.6mm} \dproj{X}(\pmb{s}_v) = \dsgn{x} - \drm{sgn}{ \langle \pmb{x} \rangle_v } \cdot\pmb{e}_v$ 
$\Rightarrow$ $\langle \dproj{X}(\pmb{s}_v) \rangle_v = 0$ 
$\Rightarrow$ $\dproj{X}(\pmb{s}_v) \in \pmb{p}_v^{\perp}$.
\end{proof}

It is easy to see that in the case that the vector $\pmb{x}$ we select is contained in some dividers 
$\pmb{p}_{k_1}^{\perp},\ldots,\pmb{p}_{k_q}^{\perp}$ of $X$, the region $R$ containing $\pmb{x}$ must be in the subspace $\cap_{j=1}^q \pmb{p}_{k_j}^{\perp}$. Then we can employ Lemma \ref{Lem-IncidenceSetRegion} for $\cap_{j=1}^q \pmb{p}_{k_j}^{\perp}$ to find out the incidence set of $R$.  

As pointed above, we have a partition $\Pi[ V_{\lambda};v ]$ of $[n]$ built by grouping vertices of $G$ according to regions in $V_{\lambda}$ each of which contains the vector $\dproj{ V_{\lambda} }(\pmb{e}_v)$, {\it i.e.,} each cell of $\Pi[ V_{\lambda};v ]$ is composed of the members in the incidence set of the region that contains $\dproj{ V_{\lambda} }(\pmb{e}_v)$. Then each cell of $\Pi[ V_{\lambda};v ]$ is invariant under the action of $(\drm{Aut}{G})_v$. There are other relations enjoyed by vertices belonging to the same orbit of $(\drm{Aut}{G})_v$, which enables us refine the partition $\Pi[ V_{\lambda};v ]$. 

Let $\pmb{x} = (x_1,\ldots,x_n)^t$ be a vector of $\R^n$. We call the multiset $\{ x_1,\ldots,x_n \}$ the {\it type} of $\pmb{x}$, which is denoted by $\{ \pmb{x} \}$, and two vectors $\pmb{x}$ and $\pmb{y}$ are said to be {\it in the same type} if two multisets $\{ \pmb{x} \}$ and $\{ \pmb{y} \}$ are the same. Apparently if two vertices $x$ and $y$ are in the same orbit of $(\drm{Aut}{G})_v$ then for any eigenvalue $\lambda$ of $\dAM{G}$,  
$$
\{ \dproj{ V_{\lambda} }( \pmb{e}_x ) \} = \{ \dproj{ V_{\lambda} }( \pmb{e}_y ) \}
$$ 
and 
$$
\langle \dproj{ V_{\lambda} }( \pmb{e}_x ),\dproj{ V_{\lambda} }( \pmb{e}_v ) \rangle = \langle \dproj{ V_{\lambda} }( \pmb{e}_y ),\dproj{ V_{\lambda} }( \pmb{e}_v ) \rangle,
$$ 
so we can use these two relations to refine each cell of $\Pi[ V_{\lambda};v ]$ and then get a better approximation to $\Pi^*_v( \lambda )$. 

As well-known, $\dAM{G}$ possesses at least 3 eigenspaces except one special case that $G$ is isomorphic to $K_n$, the complete graph of order $n$. Hence we need to integrate the information represented by partitions $\{ \Pi[ V_{\lambda}; v] : \lambda \in \drm{spec}{\dAM{G}} \}$ into one equitable partition $\Pi[ \oplus_{\lambda} V_{\lambda}; v]$, which is a better approximation to $\Pi^*_v$. We present the detail of how to integrate those partitions in the 1st part of the 3rd section. 

On the other hand, by conducting the same operation for each vertex of $G$, we can obtain a family of partitions $\{ \Pi[ \oplus V_{\lambda}; v] : v \in [n] \}$. Again we should integrate those partitions into an equitable partition $\bar{\Pi}[ \oplus V_{\lambda} ]$ so that we have an approximation to $\Pi^*_G$ at this stage. As we will see in the 3rd section, we can use in most cases the cells of $\bar{\Pi}[ \oplus V_{\lambda} ]$ to split eigenspaces of $\dAM{G}$ so that each subspace singled out is invariant under the action of $\drm{Aut}{G}$. In the case that $\bar{\Pi}[ \oplus V_{\lambda} ] = \{ [n] \}$ and $\Pi[\oplus V_{\lambda} ; v]$ possesses a big cell $C_m^v$ such that $| C_m^v | > n/2$, there is a close relation among those subspaces spanned by cells of $\Pi[ \oplus V_{\lambda}; v]$. 

To be precise, let $C_1^v=\{v\},C_2^v,\ldots,C_m^v$ be the cells of $\Pi[\oplus V_{\lambda} ; v]$ such that $|C_2^v| \leq \cdots \leq |C_m^v|$ and $m\geq 3$. Set $Y_{\lambda, v} = V_{\lambda} \ominus \mathbf{R}_{\Pi[\oplus V_{\lambda} ; v] } V_{\lambda}^{G/\Pi[\oplus V_{\lambda} ; v]}$, {\it i.e.,} $Y_{\lambda, v}$ is the orthogonal complement of $\mathbf{R}_{\Pi[\oplus V_{\lambda} ; v] } V_{\lambda}^{G/\Pi[\oplus V_{\lambda} ; v]}$ in $V_{\lambda}$, where $\mathbf{R}_{\Pi[\oplus V_{\lambda} ; v] }$ stands for the characteristic matrix of the equitable partition $\Pi[\oplus V_{\lambda} ; v]$ and $V_{\lambda}^{G/\Pi[\oplus V_{\lambda} ; v]}$ is the eigenspace of $\dAM{G/\Pi[\oplus V_{\lambda} ; v]}$ corresponding to $\lambda$, and $X_{\lambda,v,m-1} = \drm{span}{\{ Y_{\lambda,v} : \cup_{i=2}^{m-1} C_i^v \}}$, which is spanned by vectors $\left\{ \dproj{Y_{\lambda,v}}( \pmb{e}_u ) : u \in \cup_{i=2}^{m-1} C_i^v \right\}$.

\begin{Lemma}\label{Lem-SeparatingBigCell} 
Suppose $\bar{\Pi}[ \oplus V_{\lambda} ]$ contains only one cell $[n]$. If $|C_m^v| > n/2$ then one of following two cases occurs. 
\begin{enumerate}

\item[i)] The subspace $\duspan{\oplus_{\lambda \hspace{0.5mm} \in \hspace{0.5mm} \drm{spce}{\dAM{G}}}X_{\lambda,v,m-1}}{C_m^v}$ is non-trivial.

\item[ii)] For any vertex $x$ of $[n] \setminus C_m^v$, $C_m^x = C_m^v$ where $C_m^x$ denotes the biggest cell of $\Pi[\oplus V_{\lambda} ; x]$.

\end{enumerate}
\end{Lemma}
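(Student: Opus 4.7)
The plan is to proceed by contraposition, assuming the subspace in (i) is trivial and deriving (ii). Observe first that since the eigenspaces $V_{\lambda}$ are mutually orthogonal, the subspace $\bigoplus_{\lambda}X_{\lambda,v,m-1}$ decomposes across $\lambda$, so the triviality of $\duspan{\oplus_{\lambda}X_{\lambda,v,m-1}}{C_m^v}$ is equivalent to
\begin{equation*}
\langle \dproj{Y_{\lambda,v}}(\pmb{e}_u),\,\dproj{Y_{\lambda,v}}(\pmb{e}_w)\rangle=0,\qquad \forall\,\lambda\in\drm{spec}{\dAM{G}},\ u\in C_m^v,\ w\in\bigcup_{i=2}^{m-1}C_i^v.
\end{equation*}
On the other hand, because $\Pi[\oplus V_\lambda;v]$ is equitable, the same computation that produced equation~(\ref{Equ-OrbitsAutG-Equitable}) gives $\dproj{V_\lambda}(\pmb{R}_{C_j^v})\in\mathbf{R}_{\Pi[\oplus V_\lambda;v]}V_\lambda^{G/\Pi[\oplus V_\lambda;v]}$ for every cell $C_j^v$, equivalently
\begin{equation*}
\sum_{u\in C_j^v}\dproj{Y_{\lambda,v}}(\pmb{e}_u)=\pmb{0}\qquad(j=1,\ldots,m).
\end{equation*}
Together these two facts say that, inside each $Y_{\lambda,v}$, the vectors $\{\dproj{Y_{\lambda,v}}(\pmb{e}_u):u\in C_m^v\}$ sum to zero and lie in the orthogonal complement of the vectors coming from the small cells $C_2^v,\ldots,C_{m-1}^v$.

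Next, I fix an arbitrary $x\in[n]\setminus C_m^v$ and build $\Pi[\oplus V_\lambda;x]$ through the region-indicator procedure described in Section~1.2. Applying Lemma~\ref{Lem-IncidenceSetRegion} in each $V_\lambda$ to the region that contains $\dproj{V_\lambda}(\pmb{e}_x)$, the decoupling just established shows that no divider associated with some $\dproj{V_\lambda}(\pmb{e}_w)$, $w\in C_2^v\cup\cdots\cup C_{m-1}^v$, can separate two members of $C_m^v$; and the sum-to-zero relation together with $|C_m^v|>n/2$ shows that the dividers associated with $\dproj{V_\lambda}(\pmb{e}_u)$, $u\in C_m^v$, cannot split $C_m^v$ into two incidence sets both of size exceeding $|C_m^v|/2$. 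The type- and inner-product-based refinements introduced after Lemma~\ref{Lem-IncidenceSetRegion}, as well as the integration across $\lambda$, preserve this conclusion, so $C_m^v$ is contained in a single cell $C$ of $\Pi[\oplus V_\lambda;x]$.

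Finally, because $\bar{\Pi}[\oplus V_\lambda]=\{[n]\}$, the cell-size profiles of the partitions $\Pi[\oplus V_\lambda;y]$ for different $y$ cannot be separated at the integrated level; in particular $|C_m^x|>n/2$. Two subsets of $[n]$ each of size greater than $n/2$ necessarily intersect, so $C_m^x\cap C_m^v\neq\emptyset$, and combined with $C_m^v\subseteq C$ from the previous paragraph this forces $C=C_m^x=C_m^v$, which is exactly (ii). The main obstacle I anticipate is the middle paragraph: rigorously translating the algebraic decoupling between $C_m^v$ and the smaller cells into the combinatorial statement that $C_m^v$ sits inside a single cell of $\Pi[\oplus V_\lambda;x]$ requires careful bookkeeping of the sign-vector/incidence-set description of regions, separately in each eigenspace $V_\lambda$ and then again after the type- and inner-product refinements.
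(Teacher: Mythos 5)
Your overall skeleton matches the paper's: assume the span in (i) is trivial, obtain the orthogonal decomposition $V_{\lambda}=V_{\lambda,\Pi_v}\oplus X_{\lambda,v,m-1}\oplus Z_{\lambda,v,m}$ together with $\dproj{V_{\lambda,\Pi_v}\oplus X_{\lambda,v,m-1}}(\pmb{e}_w)=\frac{1}{|C_m^v|}\dproj{V_{\lambda}}(\pmb{R}_{C_m^v})$ for $w\in C_m^v$, show that for every $x\in[n]\setminus C_m^v$ the set $C_m^v$ survives unsplit in $\Pi[\oplus V_{\lambda};x]$, and finish with the size/pigeonhole observation. Your first and last paragraphs are essentially sound. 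But the middle paragraph, which you yourself flag as the obstacle, is precisely where the entire content of the paper's proof lives, and the two justifications you offer for it do not work. First, cells of $\Pi[V_{\lambda};x]$ are not determined by ``which dividers separate two vertices'': they are incidence sets of the successive regions containing $\dproj{V_{\lambda}}(\pmb{e}_x)$, further refined by types, by inner products with $\dproj{V_{\lambda}}(\pmb{e}_x)$, and by the balancing operations, so arguing that no divider from a small cell ``separates two members of $C_m^v$'' does not address whether $C_m^v$ lies entirely inside or entirely outside each incidence set $\mathcal{I}_R$; moreover your decoupling concerns the $Y_{\lambda,v}$-components, while the dividers are orthogonal complements of the full projections. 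Second, the claim that $C_m^v$ ``cannot be split into two incidence sets both of size exceeding $|C_m^v|/2$'' is vacuous --- any partition of any set has at most one part exceeding half its size --- and in particular it does not yield the statement you actually need, namely that $C_m^v$ is contained in a \emph{single} cell of $\Pi[\oplus V_{\lambda};x]$.

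What is missing is exactly the machinery the paper deploys for this step. The hypothesis $\bar{\Pi}[\oplus V_{\lambda}]=\{[n]\}$ is used there long before the final pigeonhole: it rules out the degenerate case $\drm{span}{\{\dproj{V_{\lambda}}(\pmb{e}_v)\}}=\drm{span}{\{\dproj{V_{\lambda}}(\pmb{R}_{C_m^v})\}}$ (the paper's Case 1, which you never consider), it gives equality of types and norms of all vectors in the OPSB, and, crucially, it yields the Claim
$\langle \dproj{V_{\lambda}}(\pmb{e}_v),\dproj{V_{\lambda}}(\pmb{e}_w)\rangle=\langle \dproj{V_{\lambda}}(\pmb{e}_x),\dproj{V_{\lambda}}(\pmb{e}_w)\rangle$ for all $w\in C_m^v$, $x\notin C_m^v$, obtained by comparing the large level sets $C^v_H$ and $C^x_H$ of size $>n/2$. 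That relation, combined with the constancy of $\dproj{V_{\lambda,\Pi_v}\oplus X_{\lambda,v,m-1}}(\pmb{e}_w)$ on $C_m^v$, is what drives the case analysis (the paper's Cases 2.1--2.3, including the region argument inside $V_{\lambda,A^{\lambda}_x}$ showing $C_m^v\subseteq\mathcal{I}_{R_x}$) and the final check that none of the three operations producing $\Pi[\oplus V_{\lambda};x]$ can split $C_m^v$. Without deriving this inner-product identity and carrying out that case analysis (or an equivalent substitute), your proposal asserts the key containment rather than proving it, so as it stands the proof has a genuine gap.
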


As we shall see in the 2nd part of 3rd section, this lemma shows us how to assemble those subspaces spanned by cells of $\Pi[\oplus V_{\lambda} ; v]$ and accordingly how to work out $\Pi^*_v$. As a matter of fact, there are two kinds of combinatorial constructions useful in assembling those subspaces spanned by cells of $\Pi[\oplus V_{\lambda} ; v]$, which will be presented in the next two sections. In brief, we devise a deterministic algorithm by means of those properties, which solves Graph Isomorphism Problem for any graph of order $n$ in time $n^{ O( \log n ) }$ that is equal to $2^{ O\left( \log^2 n \right) }$.

\section{Blocks for $\drm{Aut}{G}$}

In the section 1.1, we have seen how to reduce the problem of determining $\Pi^*_G$ to that of determining a series of partitions of $[n]$ each of which consists of orbits of a stabilizer fixing a sequence of vertices. The key to achieving that is Theorem 3, so let us first prove the assertion. We begin with a classical result characterizing the relation between blocks and their stabilizers, which explains the reason why blocks are vitally important in finding out a generating set of $\mathfrak{G}$. 

\begin{Lemma}[Dixon and Mortimer \cite{DM}] \label{LemBlock&Stabilizers} 
Let $\mathfrak{G}$ be a permutation group acting on $[n]$, let $\mathcal{B}$ be the set of all blocks $B$ for $\mathfrak{G}$ with $b\in B\subseteq T $, where $T$ is an orbit of $\mathfrak{G}$, and let $\mathcal{S}$ be the set of all subgroups $\mathfrak{H}$ of $\mathfrak{G}$ with $\dfix{G}{b} \leq \mathfrak{H}$. Then there is a bijection $\Psi$ of $\mathcal{B}$ onto $\mathcal{S}$ defined by $\Psi(B) = \dfix{G}{B}$, and furthermore the mapping $\Psi$ is order-preserving in the sense that if $B_1$ and $B_2$ are two blocks in $\mathcal{B}$ then $B_1 \subseteq B_2$ if and only if $\Psi(B_1) \leq \Psi(B_2)$. 
\end{Lemma}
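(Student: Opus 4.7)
The plan is to exhibit an explicit inverse $\Phi:\mathcal{S}\to\mathcal{B}$ to $\Psi$ defined by $\Phi(\mathfrak{H})=\mathfrak{H}\cdot b$ (the $\mathfrak{H}$-orbit of $b$), and then to verify in turn that both maps are well-defined, that they are mutually inverse, and that they preserve inclusion. The entire argument is a chase of orbits and stabilizers; the single non-formal step is showing that $\Phi(\mathfrak{H})$ really is a block, which is where the defining hypothesis $\mathfrak{G}_b\leq\mathfrak{H}$ of $\mathcal{S}$ gets used.

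First I would check well-definedness. For $\Psi$: given a block $B$ with $b\in B$ and any $\gamma\in\mathfrak{G}_b$, note $b=\gamma b\in\gamma B\cap B$, so the block property forces $\gamma B=B$, i.e.\ $\gamma\in\mathfrak{G}_B$; hence $\mathfrak{G}_b\leq\mathfrak{G}_B=\Psi(B)$, placing $\Psi(B)$ in $\mathcal{S}$. For $\Phi$: given $\mathfrak{H}\in\mathcal{S}$, set $B=\mathfrak{H}\cdot b$ and suppose $gB\cap B\neq\emptyset$ for some $g\in\mathfrak{G}$, so $gh_1 b=h_2 b$ with $h_1,h_2\in\mathfrak{H}$. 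Then $h_2^{-1}gh_1$ fixes $b$, so $h_2^{-1}gh_1\in\mathfrak{G}_b\leq\mathfrak{H}$, which gives $g\in h_2\mathfrak{H}h_1^{-1}\subseteq\mathfrak{H}$ and hence $gB=\mathfrak{H}b=B$; thus $B$ is a block containing $b$ inside the orbit $T$.

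Next I would verify $\Psi\circ\Phi=\mathrm{id}$ and $\Phi\circ\Psi=\mathrm{id}$. For the first, with $B=\mathfrak{H}b$ the inclusion $\mathfrak{H}\leq\mathfrak{G}_B$ is immediate; conversely, any $g\in\mathfrak{G}_B$ sends $b$ into $B$, so $gb=hb$ for some $h\in\mathfrak{H}$, giving $h^{-1}g\in\mathfrak{G}_b\leq\mathfrak{H}$ and thus $g\in\mathfrak{H}$. For the second, $\mathfrak{G}_B\cdot b\subseteq B$ is clear; conversely, for any $b'\in B$, transitivity of $\mathfrak{G}$ on the orbit $T\supseteq B$ furnishes $g\in\mathfrak{G}$ with $gb=b'$, so $b'\in gB\cap B$ forces $gB=B$ by the block property, i.e.\ $g\in\mathfrak{G}_B$ and $b'\in\mathfrak{G}_B\cdot b$.

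Finally, order-preservation follows at once. If $B_1\subseteq B_2$ then any $g\in\mathfrak{G}_{B_1}$ satisfies $gb\in B_1\subseteq B_2$, so $gB_2\cap B_2\neq\emptyset$ and the block property on $B_2$ gives $g\in\mathfrak{G}_{B_2}$. The converse direction is essentially free from the bijection: if $\mathfrak{G}_{B_1}\leq\mathfrak{G}_{B_2}$, then $B_1=\Phi(\mathfrak{G}_{B_1})=\mathfrak{G}_{B_1}\cdot b\subseteq\mathfrak{G}_{B_2}\cdot b=\Phi(\mathfrak{G}_{B_2})=B_2$. The main obstacle, and the only step with real content, is showing that $\Phi(\mathfrak{H})$ is a block; every other part is a direct unpacking of definitions.
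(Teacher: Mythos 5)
Your proof is correct: the paper itself states this lemma as a classical result cited from Dixon and Mortimer and gives no proof, and your argument — exhibiting the inverse $\Phi(\mathfrak{H})=\mathfrak{H}\cdot b$, checking both maps are well defined and mutually inverse, and deducing order-preservation (using transitivity of $\mathfrak{G}$ on the orbit $T$ for $\Phi\circ\Psi=\mathrm{id}$) — is exactly the standard argument from that source. No gaps; the verification that $\mathfrak{H}\cdot b$ is a block via $\mathfrak{G}_b\leq\mathfrak{H}$ is the essential step and you handle it correctly.
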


According to the relation above, one can easily see that stabilizers of blocks for $\mathfrak{G}$ play a significant role in generating the group.

\begin{Lemma}\label{KeyThmMaximalSubgroup}  
Let $\mathfrak{G}$ be a permutation group acting on $[n]$ and let $B$ be a block for $\mathfrak{G}$ which is contained in some orbit $T$ of $\mathfrak{G}$. Then $B$ is a maximal block if and only if $\dfix{G}{B}$ is a maximal subgroup of $\mathfrak{G}$.   
\end{Lemma}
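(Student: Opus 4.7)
The plan is to derive this as a direct corollary of Lemma \ref{LemBlock&Stabilizers}. The Dixon--Mortimer bijection $\Psi$ is order-preserving between $\mathcal{B}$ (blocks in $T$ containing a fixed $b$) and $\mathcal{S}$ (subgroups containing $\mathfrak{G}_b$), so chains of blocks in $\mathcal{B}$ are in one-to-one correspondence with chains of subgroups in $\mathcal{S}$. The whole statement then reduces to an order-theoretic observation once we identify the top elements of the two lattices.

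First I would fix any element $b \in B$ (which exists since $B$ is nonempty), so that $B \in \mathcal{B}$ and $\mathfrak{G}_B = \Psi(B) \in \mathcal{S}$. Next I would note that the orbit $T$ is itself a (trivial) block for $\mathfrak{G}$, and since every $\sigma \in \mathfrak{G}$ stabilizes $T$ setwise, $\Psi(T) = \mathfrak{G}_T = \mathfrak{G}$. Hence $T$ and $\mathfrak{G}$ are the maximum elements of $\mathcal{B}$ and $\mathcal{S}$ respectively under $\Psi$. In the terminology adopted in the excerpt, ``$B$ is a maximal block'' then means maximal among proper blocks in $T$, i.e., there is no block $K$ with $B \subsetneq K \subsetneq T$; likewise ``$\mathfrak{G}_B$ is a maximal subgroup of $\mathfrak{G}$'' means there is no subgroup $\mathfrak{H}$ with $\mathfrak{G}_B \subsetneq \mathfrak{H} \subsetneq \mathfrak{G}$.

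For the forward direction, suppose $B$ is a maximal block and, for contradiction, that there exists $\mathfrak{H}$ with $\mathfrak{G}_B \subsetneq \mathfrak{H} \subsetneq \mathfrak{G}$. Since $\mathfrak{H} \supseteq \mathfrak{G}_B \supseteq \mathfrak{G}_b$, we have $\mathfrak{H} \in \mathcal{S}$; by surjectivity of $\Psi$ there is $K \in \mathcal{B}$ with $\Psi(K) = \mathfrak{H}$. Order-preservation of $\Psi$ (together with its injectivity, to promote $\subseteq$ to $\subsetneq$) yields $B \subsetneq K \subsetneq T$, contradicting the maximality of $B$. The reverse direction is entirely symmetric: an intermediate block $B \subsetneq K \subsetneq T$ produces an intermediate subgroup $\mathfrak{G}_B \subsetneq \mathfrak{G}_K \subsetneq \mathfrak{G}$ via $\Psi$, contradicting maximality of $\mathfrak{G}_B$.

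There is no real obstacle here; the entire content is packaged in Lemma \ref{LemBlock&Stabilizers}. The only small point requiring care is to confirm that the choice of $b \in B$ does not affect the conclusion (different choices give the same set-theoretic comparison between $B$ and $K$, and the same group-theoretic comparison between $\mathfrak{G}_B$ and $\mathfrak{H}$), and to verify the identification $\Psi(T) = \mathfrak{G}$ so that ``maximal in $\mathcal{B} \setminus \{T\}$'' really does translate to ``maximal in $\mathcal{S} \setminus \{\mathfrak{G}\}$.''
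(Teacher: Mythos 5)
Your proposal is correct and follows exactly the route the paper intends: the paper states this lemma without further argument, immediately after the Dixon--Mortimer correspondence (Lemma \ref{LemBlock&Stabilizers}), precisely because the order-preserving bijection $\Psi$ transfers maximality between the lattice of blocks containing $b$ and the lattice of subgroups containing $\mathfrak{G}_b$, with $\Psi(T)=\mathfrak{G}$ identifying the top elements. Your added care about the choice of $b$ and the strictness of inclusions is sound and fills in the details the paper leaves implicit.
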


Apparently, the lemma above implies that the action of $\mathfrak{G}$ on its orbit $T$ is primitive if and only if each stabilizer $\mathfrak{G}_t$ is a maximal subgroup of $\mathfrak{G}$, where $t$ is one member of $T$. Moreover, if $T = \{t_1=t,t_2,\ldots,t_{_{s}}\}$ is a non-trivial orbit of $\mathfrak{G}$, {\it i.e.,} $s\geq 2$, then $\mathfrak{G} = \langle \mathfrak{G}_t,\gamma_2,\ldots,\gamma_{s} \rangle$, where $\gamma_i \in \mathfrak{G}$ and $\gamma_i t = t_i$, $i=2,\ldots,s$. Similarly, in order to generate the stabilizer $\mathfrak{G}_t$, we first choose one of its non-trivial orbit, and then find the stabilizer $\mathfrak{G}_{t,u}$ of some element $u$ in the orbit and permutations in $\mathfrak{G}_t$ mapping $u$ to the rest of elements in the orbit. Clearly, this reduction can be proceeded repeatedly until the stabilizer resulted contains the identity only, and therefore we need at most $(n-1) + (n-2) + \cdots + 2 + 1 = n(n-1)/2$ permutations to generate $\mathfrak{G}$. 

Now let us prove the relation that
$$
\mathbf{R}_{\Pi^*_G} V_{\lambda}^{ G / \Pi^*_G } 
= 
\left( \bigcap_{t\in T} \mathbf{R}_{\Pi_t^*} V_{\lambda}^{ G / \Pi_t^* } \right) 
\bigcap 
\left( \bigcap_{j=1}^r V_1^{ \gamma_j} \right).
$$

\begin{proof}[\bf Proof to Theorem \ref{Thm-Equation-OrbitsOfAutG}]
As we have pointed out in the section 1.1, $\mathbf{R}_{\Pi_t^*} V_{\lambda}^{ G / \Pi_t^* } = V_{\lambda} \langle t \rangle$. Moreover it is easy to see that 
$$
\mathbf{R}_{\Pi^*_G} V_{\lambda}^{ G / \Pi^*_G } \subseteq 
\left( \bigcap_{t\in T} V_{\lambda} \langle t \rangle \right) 
\bigcap 
\left( \bigcap_{j=1}^r V_1^{ \gamma_j} \right).
$$

As to the opposite direction, let us take a vector $\pmb{x}$ from $\left( \cap_{t\in T} V_{\lambda} \langle t \rangle \right) \bigcap \left( \cap_{j=1}^r V_1^{ \gamma_j} \right).$ Note that $\pmb{x} \in \cap_{t\in T} V_{\lambda} \langle t \rangle$ $\Rightarrow$ $\mathfrak{G}_{\sigma t} \pmb{x} = \pmb{x}$, $\forall \sigma\in \mathfrak{G}$, and $\pmb{x} \in \cap_{j=1}^r V_1^{ \gamma_j}$ $\Rightarrow$ $\gamma_j \hspace{0.5mm} \pmb{x} = \pmb{x}$, $\forall j\in [r]$. As a result, 
$$
\langle \mathfrak{G}_{t_1},\cdots,\mathfrak{G}_{t_k},\gamma_1,\cdots,\gamma_r \rangle \pmb{x} = \pmb{x}. 
$$
On the other hand, it is plain to see that  $\mathfrak{G} = \langle \mathfrak{G}_{t_1},\cdots,\mathfrak{G}_{t_k},\gamma_1,\cdots,\gamma_r \rangle$, so $\mathfrak{G} \pmb{x} = \pmb{x}$ and thus $\pmb{x} \in \mathbf{R}_{\Pi^*_G} V_{\lambda}^{ G / \Pi^*_G }$. 
\end{proof}

Recall that our ultimate goal is to decide whether or not two given graphs $G$ and $H$ are isomorphic and in the case of being isomorphic to output one isomorphism from $G$ to $H$. It is easy to see if we have the information about $\drm{Aut}{G}$ and $\drm{Aut}{H}$, {\it i.e.,} the information about partitions $\Pi^*_G,\Pi^*_{u_1},\Pi^*_{u_1,u_2},\cdots,\Pi^*_{u_1,\ldots,u_s}$ and $\Pi^*_H,\Pi^*_{v_1},\Pi^*_{v_1,v_2},\cdots,\Pi^*_{v_1,\ldots,v_s}$, where $u_1,\ldots,u_s$ and $v_1,\ldots,v_s$ are two fastening sequences of $\drm{Aut}{G}$ and $\drm{Aut}{H}$ respectively, and the corresponding relations between cells of partitions in each pair $(\Pi^*_G,\Pi^*_H)$, $(\Pi^*_{u_1},\Pi^*_{v_1})$, $(\Pi^*_{u_1,u_2},\Pi^*_{v_1,v_2})$ and so forth, then we can efficiently achieve our goal. In the next section we shall present the algorithm $\mathscr{A}$ that enables us to reveal the information by means of $\oplus V_{\lambda}^G$ and  $\oplus V_{\lambda}^H$.

\vspace{2mm}
As one might expect, if $\mathfrak{G}$ acts on $T$ imprimitively, the structure of $\mathfrak{G}$ action on  $T$ is more colorful, which is illustrated by blocks for $\mathfrak{G}$. Moreover, it turns out that minimal blocks for $\mathfrak{G}$ are crucial for splitting eigenspaces of $\dAM{G}$ and for assembling subspaces spanned by cells of $\Pi[\oplus V_{\lambda} ; v]$. 

Now let us see how to build blocks for $\mathfrak{G}$ in virtue of partitions composed of orbits of stabilizers each of which fixes exactly one vertex of $G$. Pick two vertices $v'$ and $v''$ from $V(G)$. We can construct a bipartite graph $\dbipart{\Pi^*}{v}$ by means of two partitions $\Pi^*_{v'}$ and $\Pi^*_{v''}$: the vertex set consists of cells of $\Pi^*_{v'}$ and of $\Pi^*_{v''}$ and two vertices in the graph are adjacent if the intersection of two cells relevant is not empty. Evidently, the graph $\dbipart{\Pi^*}{v}$ is bipartite and two parts of the vertex set are made respectively up of orbits of $\mathfrak{G}_{v'}$ and of $\mathfrak{G}_{v''}$. Note that one component of $\dbipart{\Pi^*}{v}$ naturally corresponds to a subset of $[n]$, so we can use a component to indicate the subset relevant. 

There are essentially two kinds of blocks for a permutation group, and a non-trivial component $C[v']$ in $\dbipart{\Pi^*}{v}$ containing the vertex $v'$ shows us one of them.  

\begin{Lemma}\label{LemFindBlocks-1}
Let $C[x]$ be a component in $\dbipart{\Pi^*}{v}$ containing the element $x$. Then $C[x] = \langle \mathfrak{G}_{v'}, \mathfrak{G}_{v''} \rangle x$, and moreover the component $C[v']$ in $\dbipart{\Pi^*}{v}$ is a block for $\mathfrak{G}$.
\end{Lemma}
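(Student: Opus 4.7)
The plan is to prove the two claims in sequence, with the second one following essentially from the first plus Lemma \ref{LemBlock&Stabilizers}. Throughout, for $z \in [n]$, let $a(z)$ and $b(z)$ denote the cells of $\Pi^*_{v'}$ and $\Pi^*_{v''}$ containing $z$; these are the two vertices of $\dbipart{\Pi^*}{v}$ joined by an edge that corresponds to $z$.

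For the inclusion $\langle \mathfrak{G}_{v'}, \mathfrak{G}_{v''} \rangle x \subseteq C[x]$, I would argue by induction on word length. A single application of $\sigma \in \mathfrak{G}_{v'}$ sends $x$ to an element in the same $\mathfrak{G}_{v'}$-orbit, so $a(\sigma x) = a(x)$; hence $\sigma x$ lies in the same component as $x$. The same holds for $\mathfrak{G}_{v''}$, and the inductive step is identical. For the reverse inclusion, I would unwind a path. If $y \in C[x]$, there is a path of cells $c_0, c_1, \ldots, c_k$ in $\dbipart{\Pi^*}{v}$ with $x \in c_0$, $y \in c_k$, alternating between $\Pi^*_{v'}$ and $\Pi^*_{v''}$, and with $c_i \cap c_{i+1} \neq \emptyset$. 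Picking witnesses $z_i \in c_i \cap c_{i+1}$, each consecutive pair of elements in the sequence $x, z_0, z_1, \ldots, z_{k-1}, y$ lies in a common cell, which is an orbit of either $\mathfrak{G}_{v'}$ or $\mathfrak{G}_{v''}$ depending on parity, so the pair differs by an element of the corresponding stabilizer. Composing, $y \in \langle \mathfrak{G}_{v'}, \mathfrak{G}_{v''} \rangle x$.

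For the second claim, set $\mathfrak{H} = \langle \mathfrak{G}_{v'}, \mathfrak{G}_{v''} \rangle$, so that $C[v'] = \mathfrak{H} v'$ by the first part. Since $\mathfrak{G}_{v'} \leq \mathfrak{H} \leq \mathfrak{G}$, Lemma \ref{LemBlock&Stabilizers} supplies a block for $\mathfrak{G}$ in the orbit of $v'$ corresponding to $\mathfrak{H}$; it remains to identify this block with $\mathfrak{H} v'$. I would either invoke the bijection described in that lemma directly, or verify the block property by hand as follows: suppose $\sigma \in \mathfrak{G}$ and $\sigma(\mathfrak{H} v') \cap \mathfrak{H} v' \neq \emptyset$, so $\sigma h_1 v' = h_2 v'$ for some $h_1, h_2 \in \mathfrak{H}$. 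Then $h_2^{-1} \sigma h_1 \in \mathfrak{G}_{v'} \leq \mathfrak{H}$, forcing $\sigma \in \mathfrak{H}$ and hence $\sigma(\mathfrak{H} v') = \mathfrak{H} v'$. Since $\mathfrak{H} v' \subseteq \mathfrak{G} v'$, the set $C[v']$ is contained in a single orbit, completing the proof that it is a block.

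The main obstacle is purely bookkeeping in the first half: being careful about the parity of the path in the bipartite graph and matching the sequence of witnesses to the alternating pattern of generators from $\mathfrak{G}_{v'}$ and $\mathfrak{G}_{v''}$. Once the first half is in place, the second half is essentially a direct invocation of the Dixon--Mortimer correspondence from Lemma \ref{LemBlock&Stabilizers}.
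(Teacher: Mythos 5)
Your proposal is correct and follows essentially the same route as the paper: both directions of $C[x]=\langle\mathfrak{G}_{v'},\mathfrak{G}_{v''}\rangle x$ are argued exactly as the paper intends (your word-length induction and path-unwinding simply fill in details the paper leaves as ``easy to see''), and your by-hand block verification ($\sigma h_1 v' = h_2 v' \Rightarrow h_2^{-1}\sigma h_1\in\mathfrak{G}_{v'}\Rightarrow\sigma\in\langle\mathfrak{G}_{v'},\mathfrak{G}_{v''}\rangle$) is precisely the paper's argument. The optional appeal to the Dixon--Mortimer correspondence is unnecessary but harmless, since you carry out the direct verification anyway.
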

\begin{proof}[\bf Proof]
Suppose $y$ is in the subset $\langle \mathfrak{G}_{v'}, \mathfrak{G}_{v''} \rangle x$. Then $\exists$ $\sigma_1,\ldots,\sigma_m \in \mathfrak{G}_{v'}$ and $\gamma_1,\ldots,\gamma_m\in\mathfrak{G}_{v''}$ {\it s.t.,} $y = (\Pi_{i} \hspace{0.6mm} \sigma_i\gamma_i) x$. According to the definition to $\dbipart{\Pi^*}{v}$, it is easy to see that $y \in C[x]$, and thus $\langle \mathfrak{G}_{v'}, \mathfrak{G}_{v''} \rangle x \subseteq C[x]$.

By using the same argument, one can readily see that $C[x] \subseteq \langle \mathfrak{G}_{v'}, \mathfrak{G}_{v''} \rangle x$.

In accordance with the first claim, $C[v'] = \langle \mathfrak{G}_{v'}, \mathfrak{G}_{v''} \rangle v'$. To show $C[v']$ is a block, it is sufficient to prove that if $\sigma$ is a permutation in $\mathfrak{G}$ such that $\sigma \big(\langle \mathfrak{G}_{v'}, \mathfrak{G}_{v''} \rangle v'\big) \cap \big(\langle \mathfrak{G}_{v'}, \mathfrak{G}_{v''} \rangle v'\big) \neq \emptyset$, then $\sigma \big(\langle \mathfrak{G}_{v'}, \mathfrak{G}_{v''} \rangle v'\big) = \langle \mathfrak{G}_{v'}, \mathfrak{G}_{v''} \rangle v'$.

Suppose there are $\xi,\zeta \in \langle \mathfrak{G}_{v'}, \mathfrak{G}_{v''} \rangle$, {\it s.t.,} $\sigma\xi \hspace{0.6mm} v' = \zeta v'$. Then $\zeta^{-1} \sigma \xi \hspace{0.6mm} v' = v'$, so $\zeta^{-1} \sigma \xi \in \mathfrak{G}_{v'}$. Thus $\sigma \in \zeta\mathfrak{G}_{v'}\xi^{-1} \subseteq \langle \mathfrak{G}_{v'}, \mathfrak{G}_{v''} \rangle$.
\end{proof}

Although the component $C[v']$ in $\dbipart{\Pi^*}{v}$ must be a block for $\mathfrak{G}$, it is possible that $C[v']$ contains only one vertex in $V$. For instance, suppose $\mathfrak{G}$ is the automorphism group of a cube (see the diagram below), then $C[1]$ in $\dubipart{\Pi^*}{1}{8}$ contains only one vertex 1 and $C[8]$ only 8.

\begin{center}\setlength{\unitlength}{0.5bp}
\hspace{-6mm}\includegraphics[width=3.4cm]{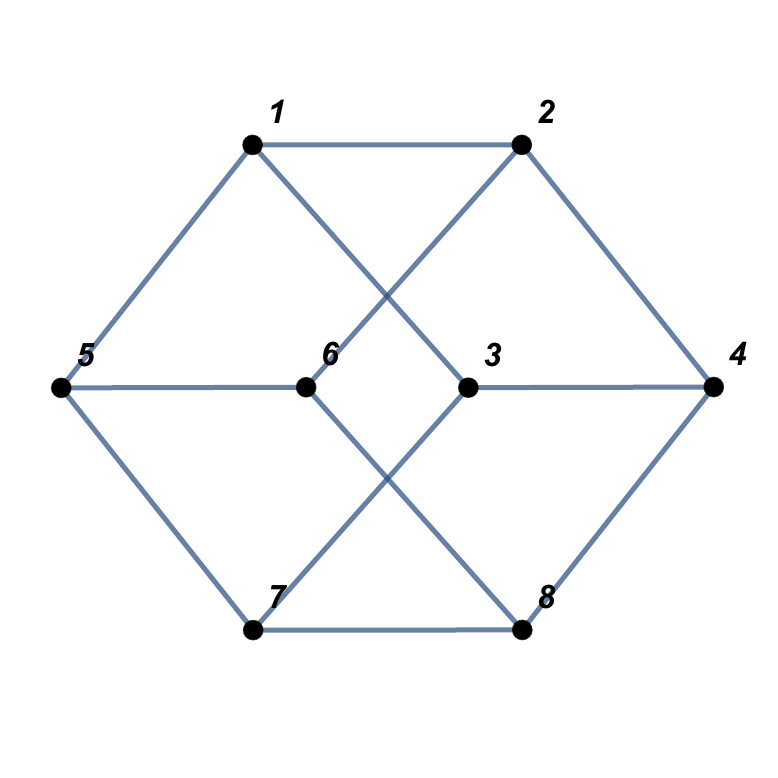}
\end{center}

In order to deal with that case, we introduce a binary relation among vertices in $T$: $x \sim y$ if $\Pi_x^*= \Pi_y^*$. Obviously, it is an equivalence relation on $T$, so it could induce a partition of $T$, which is denoted by $\widetilde{\Pi}[T]$.

\begin{Lemma}\label{LemFindBlocks-3}
All cells of $\widetilde{\Pi}[T]$ constitute a block system of $\mathfrak{G}$.
\end{Lemma}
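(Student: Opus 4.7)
My plan is to show first that the equivalence relation $\sim$ is $\mathfrak{G}$-invariant, then deduce that every cell of $\widetilde{\Pi}[T]$ is a block for $\mathfrak{G}$, and finally observe that these cells partition the orbit $T$ so they do constitute a block system.

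The key ingredient is the conjugation identity $\mathfrak{G}_{\sigma x} = \sigma \mathfrak{G}_x \sigma^{-1}$ for every $\sigma \in \mathfrak{G}$ and $x \in T$. From this it follows that the orbits of $\mathfrak{G}_{\sigma x}$ are exactly the images under $\sigma$ of the orbits of $\mathfrak{G}_x$; in partition language, $\Pi^*_{\sigma x} = \sigma \hspace{0.5mm} \Pi^*_x$, where $\sigma$ acts on a partition by acting cell-wise. Thus, if $x \sim y$, i.e., $\Pi^*_x = \Pi^*_y$, then $\Pi^*_{\sigma x} = \sigma \hspace{0.5mm}\Pi^*_x = \sigma \hspace{0.5mm}\Pi^*_y = \Pi^*_{\sigma y}$, so $\sigma x \sim \sigma y$. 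Consequently $\mathfrak{G}$ preserves the equivalence relation $\sim$ on $T$.

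Now let $B$ be any cell of $\widetilde{\Pi}[T]$, and suppose $\sigma \in \mathfrak{G}$ satisfies $\sigma B \cap B \neq \emptyset$. Pick $x \in B$ with $\sigma x \in B$. For an arbitrary $y \in B$ we have $x \sim y$, hence by the invariance above $\sigma x \sim \sigma y$; since $\sigma x$ lies in the cell $B$, so does $\sigma y$. Therefore $\sigma B \subseteq B$, and equality follows from $\sigma$ being a bijection. Thus every cell $B$ satisfies the defining ``either $\sigma B = B$ or $\sigma B \cap B = \emptyset$'' property, i.e., is a block for $\mathfrak{G}$.

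Finally, the cells of $\widetilde{\Pi}[T]$ already partition $T$ by construction, and $\mathfrak{G}$ acts transitively on $T$, so for any two cells $B_1,B_2$ and any $x_1 \in B_1$ there is $\sigma \in \mathfrak{G}$ with $\sigma x_1 \in B_2$, giving $\sigma B_1 = B_2$. Hence $\widetilde{\Pi}[T]$ is exactly the system of blocks $\{\gamma B : \gamma \in \mathfrak{G}\}$ for any one of its cells $B$, which is what ``block system of $\mathfrak{G}$'' means. I do not anticipate a real obstacle; the only point one must handle cleanly is the conjugation identity together with the fact that $\sigma$ acts on partitions by permuting cells, which ensures the equivalence relation is genuinely $\mathfrak{G}$-invariant.
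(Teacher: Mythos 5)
Your proof is correct, but it follows a different route from the paper's. You establish once and for all that the equivalence relation is $\mathfrak{G}$-equivariant, via $\mathfrak{G}_{\sigma x}=\sigma\mathfrak{G}_x\sigma^{-1}$ and hence $\Pi^*_{\sigma x}=\sigma\,\Pi^*_x$, and then the block property and the transitive permutation of the cells fall out formally (for the last step, note that the equality $\sigma B_1=B_2$ uses the invariance applied to $\sigma^{-1}$ as well, so that $\sigma$ maps cells \emph{onto} cells). The paper instead fixes a cell $C_s$, takes $y=\sigma s\notin C_s$, and proves $\sigma C_s\cap C_s=\emptyset$ by a singleton argument: no element of $C_s$ can be a singleton cell of $\Pi^*_y$ (this uses that for two points of one orbit the stabilizers are conjugate, hence of equal order, so a containment $\mathfrak{G}_y\leq\mathfrak{G}_x$ forces equality), whereas every $\sigma x$ with $x\in C_s$ is fixed by $\mathfrak{G}_y=\sigma\mathfrak{G}_s\sigma^{-1}$ because $\{x\}$ is a cell of $\Pi^*_x=\Pi^*_s$; the closing claim that every translate $\gamma C_s$ is again a cell is left as ``clear''. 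So your argument is more self-contained and elementary -- it avoids the stabilizer-order step and explicitly supplies the equivariance that the paper only asserts -- while the paper's disjointness argument isolates a sharper combinatorial fact (members of $C_s$ versus their images are distinguished by being singletons of $\Pi^*_y$ or not) which is in the spirit of how singleton cells are exploited later in the construction of $\mathtt{B}$. Both proofs rest ultimately on the same conjugation identity, and both correctly conclude that the cells of $\widetilde{\Pi}[T]$ form one system of blocks.
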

\begin{proof}[\bf Proof]
Suppose $C_s$ is a cell of $\widetilde{\Pi}[T]$ containing the vertex $s$. We pick arbitrarily one vertex $y$ in $T \setminus C_s$. Let $\sigma$ be a permutation in $\mathfrak{G}$ such that $\sigma s = y$. Then $\sigma C_s \neq C_s$. To show $C_s$ is a block for $\mathfrak{G}$, it is sufficient to prove that $\sigma C_s \cap C_s = \emptyset$.

Note that $y\notin C_s$, which implies $\Pi^*_s \neq \Pi^*_y$. Consequently, the cell containing $s$ in $\Pi_y^*$ cannot be singleton, otherwise $\mathfrak{G}_y s = s$ $\Rightarrow$ $\mathfrak{G}_y \leq \mathfrak{G}_s$. Then $\mathfrak{G}_y = \mathfrak{G}_s $ and thus $\Pi_y^*=\Pi_s^*$, which contradicts the assumption that $y\notin C_s$. As a result, any member in $C_s$ cannot be singleton in $\Pi_y^*$. On the other hand, $\mathfrak{G}_y (\sigma x) = \left( \sigma \mathfrak{G}_s \sigma^{-1} \right) (\sigma x) = \sigma x$ for any $x \in C_s$, {\it i.e.,} $\sigma x$ is a singleton in $\Pi_y^*$. Therefore $C_s \cap \sigma C_s = \emptyset$.

It is clear that for any $\gamma \in \mathfrak{G}$, $\gamma C_s$ also belongs to $\widetilde{\Pi}[T]$ and $T = \cup_{ \gamma \in \mathfrak{G} } \gamma C_s$, so $\widetilde{\Pi}[T]$ is a block system.
\end{proof}

\begin{Theorem}\label{ThmNeceAndSuffForPrimitiveness}
Let $\mathfrak{G}$ be a permutation group of $[n]$ and let $T$ be an orbit of $\mathfrak{G}$. Then the $\mathfrak{G}$ action on $T$ is primitive if and only if one of two cases below occurs 
\begin{enumerate}
\item[{\rm i)}] $\dbipart{\Pi^*}{t}$ is connected, $\forall\hspace{0.6mm} t',t'' \in T$;

\item[{\rm ii)}] $\dbipart{\Pi^*}{t}$ is a perfect matching consisting of $|T|$ edges, $\forall\hspace{0.6mm} t',t'' \in T$, and $|T|$ is a prime number. In fact, $\mathfrak{G}$ is a circulant group of prime order in this case.
\end{enumerate}
\end{Theorem}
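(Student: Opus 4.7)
The plan is to exploit Lemma \ref{LemFindBlocks-1}, which identifies the component $C[t'] \subseteq T$ with the orbit $\langle \mathfrak{G}_{t'}, \mathfrak{G}_{t''}\rangle t'$ and establishes it as a block for $\mathfrak{G}$, together with one auxiliary $\mathfrak{G}$-invariant equivalence on $T$ whose primitivity dichotomy will match the dichotomy in the statement, namely $s \approx s'$ iff $\mathfrak{G}_s = \mathfrak{G}_{s'}$.

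For sufficiency, if (i) holds and $B \subseteq T$ is a block with $|B| \geq 2$, I would choose distinct $t', t'' \in B$: the setwise stabilizer $\mathfrak{G}_B$ contains both $\mathfrak{G}_{t'}$ and $\mathfrak{G}_{t''}$, so $T = \langle \mathfrak{G}_{t'}, \mathfrak{G}_{t''}\rangle t' \subseteq B$, forcing $B = T$. If instead (ii) holds, transitivity of $\mathfrak{G}$ on $T$ combined with the classical fact that block sizes divide $|T|$ immediately forces primitivity, since $|T|$ is prime.

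For necessity, $\approx$ is $\mathfrak{G}$-invariant because $\mathfrak{G}_{\gamma s} = \gamma\mathfrak{G}_s \gamma^{-1}$, so its classes form a $\mathfrak{G}$-invariant partition of $T$, hence a block system; by primitivity the classes are either all singletons or the one class $T$. In the singleton case, for any distinct $t', t'' \in T$ the stabilizers $\mathfrak{G}_{t'}$ and $\mathfrak{G}_{t''}$ have equal order but are unequal, so $\mathfrak{G}_{t''} \not\subseteq \mathfrak{G}_{t'}$, some element of $\mathfrak{G}_{t''}$ moves $t'$, and the block $\langle \mathfrak{G}_{t'}, \mathfrak{G}_{t''}\rangle t'$ supplied by Lemma \ref{LemFindBlocks-1} is nontrivial, hence equal to $T$; this produces case (i). In the total case, the common stabilizer $K = \mathfrak{G}_s$ is the kernel of the $\mathfrak{G}$-action on $T$, so $\mathfrak{G}/K$ acts primitively on $T$ with trivial point stabilizer; Lemma \ref{KeyThmMaximalSubgroup} then says that the trivial subgroup of $\mathfrak{G}/K$ is maximal, so $\mathfrak{G}/K$ has no proper nontrivial subgroup and is therefore cyclic of prime order $p = |T|$ (the circulant assertion). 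Since $K$ fixes $T$ pointwise, both $\Pi^*_{t'}$ and $\Pi^*_{t''}$ restrict to the discrete partition on $T$, and the bipartite graph restricted to cells meeting $T$ is the tautological perfect matching $\{s\}\leftrightarrow\{s\}$ with $|T|$ edges, giving case (ii).

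The hardest step, I anticipate, is spotting the right auxiliary relation $\approx$ and checking that its two primitivity alternatives correspond cleanly to connectedness of $[\Pi^*_{t'}, \Pi^*_{t''}]$ on the one hand and to its collapse into a $|T|$-edge perfect matching on the other; once this identification is made, everything else is an application of Lemma \ref{LemFindBlocks-1}, an orbit-stabilizer count, and Lemma \ref{KeyThmMaximalSubgroup}.
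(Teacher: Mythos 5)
Your argument is correct and rests on the same cornerstone as the paper's --- Lemma \ref{LemFindBlocks-1}, identifying the component $C[t']$ with the block $\langle\mathfrak{G}_{t'},\mathfrak{G}_{t''}\rangle t'$ --- but the necessity half is organized around a genuinely different case split. The paper splits on whether $\mathfrak{G}_t$ is trivial or not: if trivial, maximality of $\mathfrak{G}_t$ (Lemma \ref{KeyThmMaximalSubgroup}) gives $\langle\xi\rangle=\mathfrak{G}$ for every $\xi\neq 1$, so $\mathfrak{G}$ itself is cyclic of prime order; if nontrivial, it invokes Lemma \ref{LemFindBlocks-3} to assert that the stabilizers $\mathfrak{G}_{t'}$, $t'\in T$, are pairwise distinct, and then maximality yields $\langle\mathfrak{G}_{t'},\mathfrak{G}_{t''}\rangle=\mathfrak{G}$ and $C[t']=\mathfrak{G}t'=T$. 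You split instead on whether the stabilizers are pairwise distinct or all equal, via the congruence $s\approx s'\Leftrightarrow\mathfrak{G}_s=\mathfrak{G}_{s'}$ (a stabilizer analogue of the paper's $\widetilde{\Pi}[T]$, with invariance immediate from $\mathfrak{G}_{\gamma s}=\gamma\mathfrak{G}_s\gamma^{-1}$). In the distinct case you do not even need maximality: equal-order stabilizers that are unequal cannot be nested, so $C[t']$ is a nontrivial block and primitivity alone forces $C[t']=T$. In the equal case you pass to $\mathfrak{G}/K$ with $K$ the common stabilizer, i.e.\ the kernel of the action on $T$, and conclude that the induced group is regular of prime degree, hence cyclic of prime order $|T|$, with the matching picture on $T$ following because $K$ fixes $T$ pointwise. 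This divergence is substantive and your version is the more careful one: when $\mathfrak{G}$ acts unfaithfully on the orbit $T$ (which can happen for $\mathrm{Aut}\hspace{0.5mm}G$, e.g.\ a group $S_3\times S_2$ with $T$ the $2$-element orbit), all stabilizers are equal and nontrivial, so the paper's step ``$\mathfrak{G}_t\supsetneq\{1\}$ implies $\mathfrak{G}_{t'}\neq\mathfrak{G}_{t''}$'' is not actually delivered by Lemma \ref{LemFindBlocks-3}, and the parenthetical claim in ii) can then only hold for the induced group $\mathfrak{G}/K\cong\mathfrak{G}|_T$, exactly as you prove it. Your sufficiency argument for i) is the paper's run directly rather than contrapositively, your justification of ii) (block sizes divide the prime $|T|$) is the standard reason behind the paper's ``obviously'', and, like the paper, what both proofs really establish and use is connectivity (resp.\ the matching structure) of $[\Pi^*_{t'},\Pi^*_{t''}]$ restricted to $T$, consistent with the phrasing of Theorem \ref{Thm-OrbitsOfAutG}.
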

\begin{proof}[\bf Proof]
Let us begin with the sufficiency of our assertion. In the case i), if there exists a non-trivial block $B \subseteq T$ for $\mathfrak{G}$, then the bipartite graph $\dbipart{\Pi^*}{b}$ cannot be connected for any vertices $b'$ and $b''$ in $B$. In fact, the component $C[b']$ in $\dbipart{\Pi^*}{b}$, due to Lemma \ref{LemFindBlocks-1}, consisting of vertices in $\left\langle \mathfrak{G}_{b'},\mathfrak{G}_{b''} \right\rangle b'$, is contained in $\mathfrak{G}_B b' = B$. This is in contradiction with the assumption that $\dbipart{\Pi^*}{b}$ is connected.

Obviously, the action of $\mathfrak{G}$ on $T$ is primitive in the case ii).

\vspace{2mm} As to the necessity, one first note that there are only two possibilities for each stabilizer $\mathfrak{G}_t$: $\mathfrak{G}_t \supsetneq \{ 1 \}$ or $\mathfrak{G}_t = \{ 1 \}$. Because $\mathfrak{G}$ is primitive, the subgroup $\mathfrak{G}_t$ is maximal due to Lemma \ref{KeyThmMaximalSubgroup}. Hence for any permutation $\xi \in \mathfrak{G} \setminus \{1\}$, $\langle \xi \rangle = \langle \xi,\mathfrak{G}_t \rangle = \mathfrak{G}$ in the second case, which implies that $\mathfrak{G}$ is a circulant group of prime order.

According to Lemma \ref{LemFindBlocks-3}, $\forall\hspace{0.5mm} t',t'' \in T$, $\mathfrak{G}_{t'} \neq \mathfrak{G}_{t''}$, provided that $\mathfrak{G}_t \supsetneq \{ 1 \}$. On the other hand, the primitiveness of $\mathfrak{G}$ implies that $\langle \mathfrak{G}_{t'},\mathfrak{G}_{t''} \rangle = \mathfrak{G}$. By means of Lemma \ref{LemFindBlocks-1}, $C[t'] = \langle \mathfrak{G}_{t'}, \mathfrak{G}_{t''} \rangle t' = \mathfrak{G} t' = T$, so the graph $\dbipart{\Pi^*}{t}$ is connected.
\end{proof}

Suppose an $\mathfrak{G}$-orbit $T$ is composed of $s$ vertices $t_1,\ldots,t_s$. We can use those partitions associated with members of $T$ to construct a multipartite graph $\dumultipart{\Pi^*}{t_1}{t_s}$ with $s$ parts, which is similar to the bipartite graph $\dubipart{\Pi^*}{t_i}{t_j}$. The vertex set of the graph is the set of cells in $\cup_{i=1}^s \Pi^*_{t_i}$ and two vertices are adjacent if the two cells relevant have a non-empty intersection. Obviously, each component of $\dumultipart{\Pi^*}{t_1}{t_s}$ corresponds to a subset of $[n]$, so we can regard a component of the graph as a subset of $[n]$. Furthermore, it is not difficult to see any two members of $[n]$ belonging to distinct orbits of $\mathfrak{G}$ cannot be contained in the same component of $\dumultipart{\Pi^*}{t_1}{t_s}$. Thus we always focus one orbit of $\mathfrak{G}$ in characterizing structures of $\dumultipart{\Pi^*}{t_1}{t_s}$.

\begin{Lemma}\label{Lem-BlocksInMultipartConstruction}
Any component of the graph $\dumultipart{\Pi^*}{t_1}{t_s}$, which is contained in $T$, is a block for $\mathfrak{G}$, and the partition of $T$ induced by components of $\dumultipart{\Pi^*}{t_1}{t_s}$ is a block system of $\mathfrak{G}$.
\end{Lemma}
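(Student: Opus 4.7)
My plan is to extend the proof of Lemma \ref{LemFindBlocks-1} from the bipartite graph $\dbipart{\Pi^*}{v}$ to the multipartite graph $\dumultipart{\Pi^*}{t_1}{t_s}$, with $s$ stabilizers $\mathfrak{G}_{t_1},\ldots,\mathfrak{G}_{t_s}$ playing the role previously taken by $\mathfrak{G}_{v'}$ and $\mathfrak{G}_{v''}$. Set $N := \langle \mathfrak{G}_{t_1},\ldots,\mathfrak{G}_{t_s} \rangle$. I would first establish that every component $C[x]$ contained in $T$ satisfies $C[x] = N x$. One direction is immediate: each generator $\sigma \in \mathfrak{G}_{t_i}$ moves $x$ within a single cell of $\Pi^*_{t_i}$ and hence along one edge of $\dumultipart{\Pi^*}{t_1}{t_s}$, so $Nx \subseteq C[x]$. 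For the reverse inclusion I would translate a walk in $\dumultipart{\Pi^*}{t_1}{t_s}$ into a product: if two elements $y, y'$ lie in a common cell of $\Pi^*_{t_i}$ then $y' = \sigma y$ for some $\sigma \in \mathfrak{G}_{t_i}$, because the cells of $\Pi^*_{t_i}$ are $\mathfrak{G}_{t_i}$-orbits, and stitching such steps along a walk from $x$ to any $y \in C[x]$ expresses $y$ as an element of $Nx$.

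The second step is to show that $N$ is normal in $\mathfrak{G}$. Since $T$ is a $\mathfrak{G}$-orbit, every $\sigma \in \mathfrak{G}$ induces a permutation $\pi$ of $[s]$ with $\sigma t_i = t_{\pi(i)}$, whence $\sigma \mathfrak{G}_{t_i} \sigma^{-1} = \mathfrak{G}_{t_{\pi(i)}}$; conjugation by $\sigma$ therefore permutes the generators of $N$ and fixes $N$ setwise. Combined with $C[x] = Nx$ this yields $\sigma C[x] = N \sigma x = C[\sigma x]$, so any two $\mathfrak{G}$-translates of $C[x]$ are either equal or disjoint, which is precisely the block condition. The family of components inside $T$ partitions $T$, and $\mathfrak{G}$ acts transitively on $T$, so this family is a block system of $\mathfrak{G}$.

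The only subtle point is the walk-to-product translation in the first step: adjacency in $\dumultipart{\Pi^*}{t_1}{t_s}$ is defined on cells rather than on elements of $[n]$, so one has to pick a witness in each consecutive intersection of cells and assemble the stabilizer elements acting on those witnesses into a single product, verifying the resulting group element lands on the intended endpoint. I do not expect a genuine obstacle here once the bookkeeping is written out carefully; the normality step and the block-system conclusion are then formal consequences.
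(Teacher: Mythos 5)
Your proposal is correct, and its first half --- identifying each component $C[x]$ inside $T$ with $\dusubgroup{\mathfrak{G}}{t_1}{t_s}\, x$ via the walk-to-product translation --- is exactly how the paper's proof begins (it explicitly reuses the argument of Lemma \ref{LemFindBlocks-1}); the bookkeeping you flag is harmless because the cells of $\Pi^*_{t_i}$ are precisely the $\mathfrak{G}_{t_i}$-orbits. Where you genuinely diverge is the second half. Writing $N = \dusubgroup{\mathfrak{G}}{t_1}{t_s}$, the paper never asserts normality of $N$: it notes that $\sigma\, C[t]$ lies inside some component, deduces the one-sided inclusion $\sigma N t \subseteq N \sigma t$, reverses it by applying the same inclusion to an arbitrary $\gamma\in\mathfrak{G}$ at the point $\sigma t$, and so obtains the pointwise identity $\sigma^{-1} N \sigma\, t = N t$ of relation (\ref{Equ-ComponentsBlocks}), from which blockness and the block-system claim follow. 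You instead observe that, since $t_1,\ldots,t_s$ exhaust the orbit $T$, conjugation by any $\sigma\in\mathfrak{G}$ permutes the point stabilizers $\mathfrak{G}_{t_i}$, hence $\sigma N \sigma^{-1}=N$; then $\sigma\, C[x] = N\sigma x = C[\sigma x]$ is immediate, and the components contained in $T$ are exactly the $N$-orbits, i.e., the orbit partition of a normal subgroup, which is a block system by a standard fact. Your route is cleaner and buys the stronger structural conclusion $N\trianglelefteq\mathfrak{G}$ for free; note, though, that the paper's step ``$\sigma\, C[t]$ is contained in a component'' tacitly rests on the same observation you make explicit (that $\sigma$ carries the partitions $\Pi^*_{t_i}$ into one another because the whole orbit is used), so the two proofs use the same underlying fact, packaged differently.
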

\begin{proof}[\bf Proof]
First of all, one can use the arguments in proving Lemma \ref{LemFindBlocks-1} to prove the first assertion. To be precise, it is easy to see that the component $C[t]$ of the multipartite graph containing the vertex $t \in T$ is the same as the subset $\dusubgroup{\mathfrak{G}}{t_1}{t_s} t$, and $\dusubgroup{\mathfrak{G}}{t_1}{t_s} t$ is a block for $\mathfrak{G}$. Consequently, $\sigma \dusubgroup{\mathfrak{G}}{t_1}{t_s} t$ is also a block for $\mathfrak{G}$ for any permutation $\sigma$ in $\mathfrak{G}$.

Moreover one can readily see that $\sigma \hspace{0.4mm} C[t]$ is contained in a component of the graph. Clearly, the vertex $\sigma t$ belongs to $\sigma \hspace{0.4mm} C[t]$. Hence 
$$
\sigma \dusubgroup{\mathfrak{G}}{t_1}{t_s} t \subseteq \dusubgroup{\mathfrak{G}}{t_1}{t_s} \sigma t,
$$
 and thus 
$\dusubgroup{\mathfrak{G}}{t_1}{t_s} t \subseteq \sigma^{-1} \dusubgroup{\mathfrak{G}}{t_1}{t_s} \sigma t.$
Note that $T = \{ \sigma t : \sigma \in \mathfrak{G} \}$, so for any $\gamma \in \mathfrak{G}$ we have 
$$
\gamma \dusubgroup{\mathfrak{G}}{t_1}{t_s} (\sigma t) \subseteq 
\dusubgroup{\mathfrak{G}}{t_1}{t_s} \gamma (\sigma t),
$$
and thus 
$\sigma^{-1}\dusubgroup{\mathfrak{G}}{t_1}{t_s}\sigma t \subseteq \dusubgroup{\mathfrak{G}}{t_1}{t_s} t.$
As a result, 
\begin{equation}\label{Equ-ComponentsBlocks}
\sigma^{-1}\dusubgroup{\mathfrak{G}}{t_1}{t_s}\sigma t = \dusubgroup{\mathfrak{G}}{t_1}{t_s} t,
\end{equation}
and therefore 
$\dusubgroup{\mathfrak{G}}{t_1}{t_s}\sigma t = \sigma \dusubgroup{\mathfrak{G}}{t_1}{t_s} t$, which means for any component $C$ of $\dumultipart{\Pi^*}{t_1}{t_s}$, one can find a permutation $\sigma$ in $\mathfrak{G}$ so that $C = \sigma \hspace{0.5mm} C[t]$. Hence the set of components of the graph restricted on $T$ forms one block system of $\mathfrak{G}$. 
\end{proof}

\begin{Theorem}\label{Thm-OrbitsOfAutG}
The graph $\dumultipart{\Pi^*}{t_1}{t_s}$ restricted on $T$ is disconnected if and only if $T$ possesses a block system $\mathscr{B}$ of $\mathfrak{G}$ such that the action of $\mathfrak{G}$ on $\mathscr{B}$ is regular.
\end{Theorem}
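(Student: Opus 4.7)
I would organize both directions around the identification $\mathfrak{G}_{C[t]} = \dusubgroup{\mathfrak{G}}{t_1}{t_s}$, where $C[t]$ denotes the component of $\dumultipart{\Pi^*}{t_1}{t_s}$ restricted on $T$ that contains $t$. First I would verify this identification. Since $t \in T = \{t_1,\ldots,t_s\}$, the point stabilizer $\mathfrak{G}_t$ coincides with one of the generators $\mathfrak{G}_{t_i}$ and hence lies inside $\dusubgroup{\mathfrak{G}}{t_1}{t_s}$. Using $C[t] = \dusubgroup{\mathfrak{G}}{t_1}{t_s}\cdot t$ from Lemma \ref{Lem-BlocksInMultipartConstruction}, one checks $\mathfrak{G}_{t_i}\cdot C[t] = C[t]$ for every $i$, which yields $\dusubgroup{\mathfrak{G}}{t_1}{t_s} \leq \mathfrak{G}_{C[t]}$; the transitivity of both subgroups on $C[t]$ with the common point-stabilizer $\mathfrak{G}_t$ at $t$ then forces equality via an orbit-stabilizer count.

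For the forward direction, assume the graph restricted on $T$ is disconnected. Lemma \ref{Lem-BlocksInMultipartConstruction} already supplies a non-trivial block system $\mathscr{B}$ whose blocks are $\sigma C[t]$ ($\sigma \in \mathfrak{G}$), so only regularity remains. Fix $\sigma \in \mathfrak{G}$. The conjugate $\sigma^{-1}\mathfrak{G}_{C[t]}\sigma$ equals $\mathfrak{G}_{\sigma^{-1} C[t]}$, and by the identification above this is the same as $\sigma^{-1}\dusubgroup{\mathfrak{G}}{t_1}{t_s}\sigma$; equation (\ref{Equ-ComponentsBlocks}) asserts that its orbit on $t$ is exactly $C[t]$. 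For any $\xi$ in this conjugate subgroup, the block $\xi C[t]$ therefore contains $\xi t \in C[t]$, and disjointness of blocks in $\mathscr{B}$ forces $\xi C[t] = C[t]$. Thus $\mathfrak{G}_{\sigma^{-1} C[t]} \leq \mathfrak{G}_{C[t]}$, and equality of conjugate orders upgrades this to $\mathfrak{G}_{\sigma^{-1} C[t]} = \mathfrak{G}_{C[t]}$. All block stabilizers in $\mathscr{B}$ therefore coincide, which is precisely the regularity demanded by the definition in Section 1.1.

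For the converse, suppose $\mathscr{B}$ is a block system on $T$ other than $\{T\}$ on which $\mathfrak{G}$ acts regularly. Fix $t \in T$ and let $B \in \mathscr{B}$ be the block containing $t$. For each $i$, the block $B_i \in \mathscr{B}$ containing $t_i$ satisfies $\mathfrak{G}_{t_i} \leq \mathfrak{G}_{B_i}$, and regularity gives $\mathfrak{G}_{B_i}\cdot B = B$; hence $\mathfrak{G}_{t_i}\cdot B \subseteq B$ for every $i$, and consequently $C[t] = \dusubgroup{\mathfrak{G}}{t_1}{t_s}\cdot t \subseteq B \subsetneq T$. Thus $C[t]$ is a proper subset of $T$, so the graph restricted on $T$ is disconnected.

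The main obstacle I anticipate is the forward step: cleanly converting the orbit equality (\ref{Equ-ComponentsBlocks}) into the statement that every conjugate of $\mathfrak{G}_{C[t]}$ actually preserves $C[t]$, and then extracting regularity from the ensuing coincidence of setwise stabilizers of all blocks in $\mathscr{B}$. Establishing the identification $\mathfrak{G}_{C[t]} = \dusubgroup{\mathfrak{G}}{t_1}{t_s}$ and carrying out the reverse direction are essentially bookkeeping once the block framework of Lemma \ref{Lem-BlocksInMultipartConstruction} is in hand.
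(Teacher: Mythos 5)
Your proof is correct and follows essentially the same route as the paper: components of the restricted multipartite graph are blocks forming the system $\{\sigma C[t]:\sigma\in\mathfrak{G}\}$ (Lemma \ref{Lem-BlocksInMultipartConstruction}), the setwise stabilizer of $C[t]$ is identified with $\dusubgroup{\mathfrak{G}}{t_1}{t_s}$, and relation (\ref{Equ-ComponentsBlocks}) delivers regularity, while the converse confines each $\mathfrak{G}_{t_i}$-orbit, hence each component, inside a block. Your orbit--stabilizer justification of the identification and your explicit converse (with the block system required to be proper) simply fill in details the paper states without proof.
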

\begin{proof}[\bf Proof]
First of all, it is easy to see that the sufficiency of the assertion holds according to the definition to the graph $\dumultipart{\Pi^*}{t_1}{t_s}$.

Let $C$ be a subset of $T$ corresponding to some component of $\dumultipart{\Pi^*}{t_1}{t_s}$. Then $\{ \sigma C : \sigma \in \mathfrak{G} \}$ is a block system of $\mathfrak{G}$ due to Lemma \ref{Lem-BlocksInMultipartConstruction}. Thus to hold the desire, we only need to show the action of $\mathfrak{G}$ on $\{ \sigma C : \sigma \in \mathfrak{G} \}$ is regular, which is equivalent to that 
$\sigma^{-1} \mathfrak{G}_C \sigma \hspace{0.6mm} C = C$, $\forall \sigma\in\mathfrak{G}$. 

Suppose $t$ belongs to $C$. Then $C = \dusubgroup{\mathfrak{G}}{t_1}{t_s} t$, and thus 
$\dusubgroup{\mathfrak{G}}{t_1}{t_s}$ is the stabilizer of $C$. In accordance with the relation (\ref{Equ-ComponentsBlocks}), we have 
\begin{align*}
\sigma^{-1} \mathfrak{G}_C \sigma \hspace{0.6mm} C
& = \sigma^{-1} \mathfrak{G}_C \sigma \left( \dusubgroup{\mathfrak{G}}{t_1}{t_s} t \right) \\
& = \sigma^{-1} \mathfrak{G}_C \sigma \left( \sigma^{-1} \dusubgroup{\mathfrak{G}}{t_1}{t_s} \sigma t \right) \\
& = \sigma^{-1}\dusubgroup{\mathfrak{G}}{t_1}{t_s}\sigma t \\
& = \dusubgroup{\mathfrak{G}}{t_1}{t_s} t \\
&= C.
\end{align*}
\end{proof}

According to the result above, the orbit $T$ of $\mathfrak{G}$ is contained in $\dumultipart{\Pi^*}{t_1}{t_s}$ as one component unless there exists a block system $\mathscr{B}$ in $T$ on which the action of $\mathfrak{G}$ is regular.

\section{The Algorithm}

In 1982, L. Babai, D.Yu. Grigoryev and D.M. Mount presented two polynomial algorithms in the article \cite{BaGrMu}, each of which solves Graph Isomorphism Problem for graphs with bounded eigenvalue multiplicity.\footnote{In order to obtain the decomposition $\oplus V_{\lambda} = \R^n$, one needs to calculate eigenvalues and eigenspaces of $\dAM{G}$ first, the complexity of which (within a relative error bound $2^{-b}$) is bounded by $O(n^3 + (n \log^2 n) \log b)$ (see \cite{PanChenZheng} for details). } Naturally in the case that some of eigenspaces of $\dAM{G}$ are of dimension tending to infinity as $n \rightarrow \infty$, we should split those large eigenspaces into subspaces with dimension as small as possible. As shown in the section 1.2, the cells of each partition $\Pi[ V_{\lambda};v ]$ ($\lambda \in \drm{spec}{ \dAM{G} }$) can be used to split the eigenspace $V_{\lambda}$. 

In the 1st part of this section, we will show how to integrate partitions $\{ \Pi[ V_{\lambda};v ] : \lambda \in \drm{spec}{ \dAM{G} } \}$ into one partition $\Pi[ \oplus_{\lambda} V_{\lambda};v ]$ that is more effective in splitting eigenspaces of $\dAM{G}$. Moreover, we can actually integrate information contained in partitions $\{ \Pi[ \oplus V_{\lambda};v ] : v \in [n] \}$ so that two partitions $\bar{\Pi}[ \oplus V_{\lambda} ]$ and $\Pi[ \oplus V_{\lambda};\mathtt{B} ]$ resulted could reveal some global information about the structure of $\mathfrak{G}$ action on $[n]$. By means of that we assemble in the 2nd part those subspaces singled out for uncovering symmetries in $G$. In brief, by inputting the decomposition $\oplus V_{\lambda}$ of $\R^n$, our algorithm $\mathscr{A}$ outputs the information about $\mathfrak{G}$, {\it i.e,.} the partition $\Pi_G^*$ and a series of partitions of $[n]$ associated with a fastening sequence of $\mathfrak{G}$.

\subsection{Splitting Eigenspaces of $\dAM{G}$}

\noindent $\blacktriangle$ $ \Pi[ \oplus V_{\lambda};v ]$ --- an approximation to $\Pi^*_v$

\begin{enumerate}

\item Let $\pmb{x} = (x_1,\ldots,x_n)^t$ be a vector of $\R^n$. Recall that the type of the vector $\pmb{x}$ is the multiset $\{ x_1,\ldots,x_n \}$, which is denoted by $\{ \pmb{x} \}$. Apparently  if two vertices $x$ and $y$ are in the same orbit of $\mathfrak{G}_v$ then for any eigenvalue $\lambda$ of $\dAM{G}$,  
\begin{equation}\label{Equ-TypeFeature}
\{ \dproj{ V_{\lambda} }( \pmb{e}_x ) \} = \{ \dproj{ V_{\lambda} }( \pmb{e}_y ) \}
\end{equation} 
and 
\begin{equation}\label{Equ-InnerProductFeature}
\langle \dproj{ V_{\lambda} }( \pmb{e}_x ),\dproj{ V_{\lambda} }( \pmb{e}_v ) \rangle = \langle \dproj{ V_{\lambda} }( \pmb{e}_y ),\dproj{ V_{\lambda} }( \pmb{e}_v ) \rangle.
\end{equation} 
As we have seen in the introduction, there is another geometric tool also useful in determining the partition $\Pi^*_v$ --- region, so we employ all of them to work out an approximation to $\Pi^*_v$. Obviously there are two cases relevant to be dealt with.

\begin{enumerate}

\item In the case that there are some of vectors in the OPSB onto $V_{\lambda}$ that are orthogonal to $\dproj{ V_{\lambda} }( \pmb{e}_v )$, set $\mathcal{I}_0 = \{ x \in [n] : \dproj{ V_{\lambda} }( \pmb{e}_x ) \perp \dproj{ V_{\lambda} }( \pmb{e}_v ) \}$. Next, we examine types of those projections corresponding to vertices in $\mathcal{I}_0$, and then we group members of $\mathcal{I}_0$ so that two vertices $x$ and $y$ belong to the same cell if $\{ \dproj{ V_{\lambda} }( \pmb{e}_x ) \} = \{ \dproj{ V_{\lambda} }( \pmb{e}_y ) \}$. Each cell of the partition of $\mathcal{I}_0$ resulted is said to be a {\it thin cell with reference to $V_{\lambda}$}. Evidently, each cell resulted is $\mathfrak{G}_v$-invariant.

\item Let $\duspan{V_{\lambda}}{ [n] \setminus \mathcal{I}_0 }$ be the subspace spanned by vectors $\{ \dproj{ V_{\lambda} }( \pmb{e}_w ) : w \in [n] \setminus \mathcal{I}_0 \}$. Clearly, $\duspan{V_{\lambda}}{ [n] \setminus \mathcal{I}_0 }$ can be divided into regions with respect to 
$\{ \dproj{ V_{\lambda} }( \pmb{e}_w ) : w \in [n] \setminus \mathcal{I}_0 \}$. We now partition the subset $[n] \setminus \mathcal{I}_0$ by means of the region of $\duspan{V_{\lambda}}{ [n] \setminus \mathcal{I}_0 }$ which contains the vector $\dproj{ V_{\lambda} }( \pmb{e}_v )$:
 \begin{enumerate}
 
 \item Find out the incidence set of the region $R$ containing $\dproj{ V_{\lambda} }( \pmb{e}_v )$ by means of Lemma \ref{Lem-IncidenceSetRegion}.  
 
 \item Group vertices in $\mathcal{I}_R$ according to their types and angles relevant, {\it i.e.,} two vertices $x$ and $y$ belong to the same group if they enjoy the relations (\ref{Equ-TypeFeature}) and  (\ref{Equ-InnerProductFeature}).
  
 \end{enumerate}
 
Again, it is obvious that each cell resulted is $\mathfrak{G}_v$-invariant. Delete the subset $\mathcal{I}_R$ from $[n] \setminus \mathcal{I}_0$ and partition the rest of vertices by means of the region of $\duspan{V_{\lambda}}{ [n] \setminus ( \mathcal{I}_0 \cup \mathcal{I}_R ) }$, which is carved up by $\{ \dproj{ V_{\lambda} }( \pmb{e}_w )^{\perp} : w \in [n] \setminus ( \mathcal{I}_0 \cup \mathcal{I}_R ) \}$ and contains the vector $\dproj{ V_{\lambda} }( \pmb{e}_v )$. Repeat the procedure above so that we finally obtain a partition of $[n]\setminus \mathcal{I}_0$. 
\end{enumerate}

The partition of $[n]$ obtained in above way is denoted by $\Pi[ V_{\lambda};v ]$.

\vspace{2mm}
Let $V_{\lambda,[n]\setminus \mathcal{I}_0}$ denote $\duspan{V_{\lambda}}{ [n] \setminus \mathcal{I}_0 }$. It is plain to verify that $\duspan{V_{\lambda,[n]\setminus \mathcal{I}_0}}{\mathcal{I}_{R}} = V_{\lambda,[n]\setminus \mathcal{I}_0}$, where $R$ is the region of $V_{\lambda,[n]\setminus \mathcal{I}_0}$ containing $\dproj{ V_{\lambda} }( \pmb{e}_v )$. This fact is quite useful in splitting big cells as we shall see in the next part.

\item Note that a partition $\Pi[ V_{\lambda};v ]$ is related to the eigenspace $V_{\lambda}$, so we can use all those partitions to obtain a global one $\Pi[ \oplus V_{\lambda};v ] := \cap_{\lambda} \Pi[ V_{\lambda};v ]$. Let $\Pi_1$ and $\Pi_2$ be two partitions of $[n]$. Then 
$$
\Pi_1 \cap \Pi_2 = \{ C_{1i} \cap C_{2j} : C_{1i} \in \Pi_1 \mbox{ and } C_{2j} \in \Pi_2 \}.
$$

\item Let $C$ be a cell of $\Pi[ \oplus V_{\lambda};v ]$ which is not a singleton, and set $V_{\lambda,C} = \duspan{V_{\lambda}}{C}$ that is  the subspace spanned by $\{ \dproj{ V_{\lambda} }( \pmb{e}_x ) : x \in C \}$. Recall that $V_{\lambda} \langle v \rangle = \{ \pmb{u} \in V_{\lambda} : \xi \hspace{0.5mm} \pmb{u} = \pmb{u}, ~ \forall \xi\in\mathfrak{G}_v  \}$. A moment's reflection would show that if $C$ is an orbit of $\mathfrak{G}_v$ then 
\begin{equation}\label{Equ-OrbitPointStabilizerAutG}
\sum_{ x \in C} \dproj{ V_{\lambda,C} \ominus V_{\lambda} \langle v \rangle }( \pmb{e}_x ) = \pmb{0},
\end{equation}
where $V_{\lambda,C} \ominus V_{\lambda} \langle v \rangle$ stands for the orthogonal complement of $V_{\lambda,C} \cap V_{\lambda} \langle v \rangle$ in $V_{\lambda,C}$. Accordingly, we can give a further check on $\Pi[ \oplus V_{\lambda};v ]$. Note also that $\Pi_v^*$ consisting of orbits of $\mathfrak{G}_v$ is an equitable partition, so we first refine $\Pi[ \oplus V_{\lambda};v ]$ by virtue of Lemma \ref{Lemma-EquitablePartProj} so that the partition resulted is equitable, which is denoted still by $\Pi[ \oplus V_{\lambda};v ]$. Next we refine each cell of $\Pi[ \oplus V_{\lambda};v ]$ further with a relation similar to (\ref{Equ-OrbitPointStabilizerAutG}).

\begin{enumerate}

\item If $C$ is thin when embedded in the subspace $V_{\lambda,C}$, {\it i.e.,} 
$\displaystyle \dproj{ V_{\lambda,C} }( \pmb{e}_v ) \in \bigcap_{x\in C} \dproj{ V_{\lambda,C} }( \pmb{e}_x )^{\perp}$, then it is said to be {\it balanced} if the sum vector 
$$\sum_{ x \in C} 
\dproj{ 
V_{\lambda,C} \ominus V_{\lambda,\Pi_v} 
}( \pmb{e}_x ) = \pmb{0},$$
where $V_{\lambda,\Pi_v}$ stands for the subspace $\mathbf{R}_{\Pi[ \oplus V_{\lambda};v ]} V_{\lambda}^{G/\Pi[ \oplus V_{\lambda};v ]}$ and $V_{\lambda,C} \ominus V_{\lambda,\Pi_v}$ denotes the orthogonal complement of $V_{\lambda,C} \cap V_{\lambda,\Pi_v}$ in $V_{\lambda,C}$.

\item In the case that $C$ is not thin when embedded in $V_{\lambda,C}$, it is said to be {\it balanced} if for any two vertices $u$ and $w$ belonging to $C$, 
$$\left\langle \dproj{ V_{\lambda,C} }( \pmb{e}_u ),\pmb{i}_{R_{\lambda}[C]} \right\rangle 
= \left\langle \dproj{ V_{\lambda,C} }( \pmb{e}_w ),\pmb{i}_{R_{\lambda}[C]} \right\rangle$$
and the sum vector 
$$
\sum_{ x \in C} 
\dproj{ 
V_{\lambda,C} \ominus 
\big( 
V_{\lambda,\Pi_v} \oplus \drm{span}{\{ \pmb{i}_{R_{\lambda}[C]} \}}
\big)
}( \pmb{e}_x )  = \pmb{0},
$$ 
where $R_{\lambda}[C]$ is the region of $V_{\lambda,C}$ with respect to $\{ \dproj{ V_{\lambda,C} }( \pmb{e}_x ) : x \in C \}$ such that the incidence set $\mathcal{I}_{ R_{\lambda}[C] }$ is $C$ and $\pmb{i}_{R_{\lambda}[C]}$ is the indicator of $R_{\lambda}[C]$.
\end{enumerate}

In the case that a thin cell $C$, when embedded in $V_{\lambda,C}$, is not balanced, we refine $C$ further through a series of regions relevant to 
$V_{\lambda,C} \ominus V_{\lambda,\Pi_v}$ with respect to $\{ \dproj{ V_{\lambda,C} \ominus V_{\lambda,\Pi_v} }( \pmb{e}_x ) : x \in C \}$, each of which contains the sum vector above. The process of doing so is the same as we group vertices of $G$ through a series of regions relevant to $V_{\lambda}$ containing $\dproj{ V_{\lambda} }( \pmb{e}_v )$, {\it i.e.,} the process of working out $\Pi[ V_{\lambda};v ]$.

In general case, if a cell $C$ is not balanced when embedded in $V_{\lambda,C}$, then we first refine $C$ according to inner products $\{ \langle \dproj{ V_{\lambda,C} }( \pmb{e}_x ),\pmb{i}_{R_{\lambda}[C]} \rangle : x \in C \}$ and then to the sum vector involved through the process that is the same as what we did in dealing with a thin cell.

\end{enumerate}

Apparently, it is possible that after having carried out the operation iii), some of cells of the resulted partition $\Pi[ \oplus V_{\lambda};v ]$ violate the relations (\ref{Equ-TypeFeature}) and (\ref{Equ-InnerProductFeature}), where $\dproj{ V_{\lambda} }( \pmb{e}_v )$ is replaced with the sum vector relevant, or $\Pi[ \oplus V_{\lambda};v ]$ is not equitable now. Then we go back and carry out the operations i), ii) and iii) again. Repeat this procedure so that the resulted partition cannot be refined further through those three operations, and then we call the partition output a {\it balanced partition} of $[n]$ and denoted it still by $\Pi[ \oplus V_{\lambda};v ]$.

\vspace{2mm}
Let $S$ be a subset of $[n]$. Set $V_{\lambda,S} = \duspan{V_{\lambda}}{S}$, where $\lambda\in\drm{spec}{\dAM{G}}$ . We say $S$ forms a {\it complete configuration} if $\forall \lambda\in\drm{spec}{ \dAM{G} }$ and $\forall s \in S$,  
$$
\langle \dproj{ V_{\lambda,S} }( \pmb{e}_s ),\dproj{ V_{\lambda,S} }( \pmb{e}_x ) \rangle =
\langle \dproj{ V_{\lambda,S} }( \pmb{e}_s ),\dproj{ V_{\lambda,S} }( \pmb{e}_y ) \rangle,
$$
for any two members $x$ and $y$ in $S\setminus\{s\}$. One can readily see that if $S$ is a complete configuration then the action of $\drm{Aut}{ \ReSt{\oplus_{\lambda} V_{\lambda,S}}{S}}$ on $S$ is the same as the action of $\drm{Sym}{ S }$ on $S$, where $\drm{Aut}{ \ReSt{\oplus_{\lambda} V_{\lambda,S}}{S}}$ stands for the permutation group of $S$ that preserves each $V_{\lambda,S}$ invariant. For instance, if $[n]$ itself forms a complete configuration then every partition $\Pi[ \oplus V_{\lambda};v ]$ ($v\in [n]$) possesses only two cells $\{v\}$ and $[n]\setminus\{v\}$, and thus $\drm{Aut}{G} \cong \drm{Sym}{[n]}$, {\it i.e.,} $G$ is the complete graph of order $n$.

\vspace{3mm}
\noindent {$\blacktriangle$ $\bar{\Pi}[ \oplus V_{\lambda} ]$} --- whether or not belonging to the same orbit of $\mathfrak{G}$ 

\vspace{1mm}
As one can easily see, we actually use $\Pi[ \oplus V_{\lambda};v ]$ at this stage to approximate $\Pi^*_v$, so we can use the family $\left\{ \Pi[ \oplus V_{\lambda};v ] : v \in [n] \right\}$ to build a partition of $[n]$ close to $\Pi_G^*$ that is composed of orbits of $\mathfrak{G}$.

Clearly, if two vertices $u$ and $v$ are in the same orbit of $\mathfrak{G}$ then there is an automorphism $\sigma$ such that $\sigma \Pi_u^* = \Pi_v^*$, {\it i.e.,} there is a bijection between cells of $\Pi_u^*$ and of $\Pi_v^*$. On the other hand, it is not difficult to verify that the way of splitting $[n]$ and working out $\Pi[ \oplus V_{\lambda};u ]$ and $\Pi[ \oplus V_{\lambda};v ]$ induces a corresponding relation between cells of two partitions, which is denoted by $\phi_{uv}$. Accordingly we define a binary relation among vertices of $G$: $u \leftrightarrow v$ if $\forall\hspace{0.4mm} C_i,C_j \in \Pi[ \oplus V_{\lambda};u ]$, and $\forall \hspace{0.4mm} \lambda \in \drm{spec}{ \dAM{G} }$, 
\begin{equation}\label{Equ-OrbitsApproximation}
\ReSt{\{ \dproj{V_{\lambda}}( \pmb{R}_{C_i} ) \}}{C_j} = \ReSt{\{ \dproj{V_{\lambda}}( \pmb{R}_{\phi_{uv} C_i} ) \}}{\phi_{uv} C_j},
\end{equation}
where $\ReSt{\{ \dproj{V_{\lambda}}( \pmb{R}_{C_i} ) \}}{C_j}$ stands for the subset of $\{ \dproj{V_{\lambda}}( \pmb{R}_{C_i} ) \}$ consisting of coordinates of the vector $\dproj{V_{\lambda}}( \pmb{R}_{C_i} )$ corresponding to the subset $C_j$.
Clearly, the relation `$\leftrightarrow$' is an equivalence one, so we have a partition of $[n]$, which is denoted by $\Pi[ \oplus V_{\lambda} ]$. One can readily see that each cell of $\Pi[ \oplus V_{\lambda} ]$ is a union of some of orbits of $\mathfrak{G}$.

Note that the partition $\Pi_G^*$ is equitable, so we refine the partition $\Pi[ \oplus V_{\lambda} ]$ by means of Lemma \ref{Lemma-EquitablePartProj} so that it is equitable, and the partition resulted is denoted by $\bar{\Pi}[ \oplus V_{\lambda} ]$. An equitable partition is said to be {\it uniform} if the relation (\ref{Equ-OrbitsApproximation}) holds for any two vertices belonging to the same cell of the partition. Hence $\bar{\Pi}[ \oplus V_{\lambda} ]$ is an uniform partition. Again it is easy to check that each cell of $\bar{\Pi}[ \oplus V_{\lambda} ]$ is an union of some of orbits of $\mathfrak{G}$.

\vspace{2mm}
\noindent {$\blacktriangle$ $\Pi[ \oplus V_{\lambda};\mathtt{B} ]$} --- whether or not belonging to the same minimal block for $\mathfrak{G}$

\vspace{1mm}
Recall that Theorem \ref{ThmNeceAndSuffForPrimitiveness} provides an efficient way of finding out minimal blocks for $\mathfrak{G}$ by means of orbits of stabilizers each of which fixes exactly one vertex of $G$, so we could employ this tool to refine the partition $\bar{\Pi}[ \oplus V_{\lambda} ]$.

\begin{enumerate}

\item Select arbitrarily one vertex $x$ from some cell $S$ of $\bar{\Pi}[ \oplus V_{\lambda} ]$, and verify whether or not for any vertex $y \in S \setminus \{ x \}$, the bipartite graph $\big[ \Pi[ \oplus V_{\lambda};x ],\Pi[ \oplus V_{\lambda};y ] \big]$, when restricted to $S$, is connected or comprised of a perfect matching. If it is not the case for some vertex $y$ in $S$, we can refine $S$ in the following two ways:
\begin{enumerate}

\item[1)] In the case that $\Pi[ \oplus V_{\lambda};x ] = \Pi[ \oplus V_{\lambda};y ]$, we can use the subset $\mathtt{B}$ that is composed of singletons of $\Pi[ \oplus V_{\lambda};x ]$ contained in $S$ to split $S$. 

\item[2)] In the case that the vertex $x$ belongs to some non-trivial component of the bipartite graph $\big[ \Pi[ \oplus V_{\lambda};x ],\Pi[ \oplus V_{\lambda};y ] \big]$, we examine all such components for every $y\in S$ and use one of those  components $\mathtt{B}$ of minimum order to split $S$.

\end{enumerate}

It is obvious that if both of two cases occur, we should select the subset $\mathtt{B}$ of minimum order to split $S$.

\item Notice that $\mathtt{B}$ is used at this stage to approximate a minimal block for $\mathfrak{G}$, so we can use some feature enjoyed by minimal blocks to give a further check on $\mathtt{B}$. Set
$$
\ReSt{\Pi[ \oplus V_{\lambda};x ]}{\mathtt{B}} = \{ C \in \Pi[ \oplus V_{\lambda};x ] : C \subseteq \mathtt{B} \}.
$$ 
Recall that $\Pi[ \oplus V_{\lambda};x ]$ is an approximation to $\Pi^*_x$, so according to Lemma \ref{LemFindBlocks-1} and \ref{LemFindBlocks-3}, there are two cases:

\begin{enumerate}

\item[(a)] The partition $\ReSt{\Pi[ \oplus V_{\lambda};x ]}{\mathtt{B}}$ is composed of a number of singletons, {\it i.e.,} each cell of $\ReSt{\Pi[ \oplus V_{\lambda};x ]}{\mathtt{B}}$ contains exactly one member. Note that we know the corresponding relation between cells of $\ReSt{\Pi[ \oplus V_{\lambda};x ]}{\mathtt{B}}$ and of $\ReSt{\Pi[ \oplus V_{\lambda};y ]}{\mathtt{B}}$, $\forall y \in \mathtt{B} \setminus \{ x \}$, due to the process of outputting those two partitions $\Pi[ \oplus V_{\lambda};x ]$ and $\Pi[ \oplus V_{\lambda};y ]$. Thus we can easily figure out a group of permutations of $\mathtt{B}$, which is denoted by $\mathfrak{P}$. It is plain to see that each permutation in $\mathfrak{P}$ can actually be regarded as an operator on the subspace $\oplus_{\lambda} \hspace{0.5mm} \duspan{V_{\lambda}}{\mathtt{B}}$, where $\duspan{V_{\lambda}}{\mathtt{B}}$ is spanned by vectors $\{ \dproj{ V_{\lambda} }(\pmb{e}_u) : u \in \mathtt{B} \}$, for it naturally acts on vectors $\left\{ \dproj{ V_{\lambda} }(\pmb{e}_u) : u \in \mathtt{B} \right\}$ in the way defined as (\ref{Def-PermutationOperator}). Hence by checking whether or not each subspace $\duspan{V_{\lambda}}{\mathtt{B}}$ ($\lambda\in\drm{spec}{\dAM{G}}$) is invariant under the action of $\mathfrak{P}$, we can easily determine the group $\drm{Aut}{\ReSt{\oplus_{\lambda}\hspace{0.5mm} \duspan{V_{\lambda}}{\mathtt{B}}}{\mathtt{B}}}$ and the structure of its action, where $\drm{Aut}{\ReSt{\oplus_{\lambda}\hspace{0.5mm} \duspan{V_{\lambda}}{\mathtt{B}}}{\mathtt{B}}}$ stands for the permutation group of $\mathtt{B}$ that preserves every subspace 
$\duspan{V_{\lambda}}{\mathtt{B}}$ invariant, $\lambda\in\drm{spec}{\dAM{G}}$. 

On the other hand, if $B$ is a minimal block for $\mathfrak{G}$, then the action of $\mathfrak{G}_{B}$ on $B$ is primitive. Accordingly, if the action of $\drm{Aut}{\ReSt{\oplus_{\lambda}\hspace{0.5mm} \duspan{V_{\lambda}}{\mathtt{B}}}{\mathtt{B}}}$ on $\mathtt{B}$ is not primitive, we select one of minimal blocks for $\drm{Aut}{\ReSt{\oplus_{\lambda}\hspace{0.5mm} \duspan{V_{\lambda}}{\mathtt{B}}}{\mathtt{B}}}$ and denote it by $\mathtt{B}$.

\item[(b)] For any vertex $y$ in $\mathtt{B} \setminus\{ x \}$, the bipartite graph $\big[ \Pi[ \oplus V_{\lambda};x ],\Pi[ \oplus V_{\lambda};y ] \big]$, when restricted on $\mathtt{B}$, is connected. In order to 
decide whether or not $\mathtt{B}$ is a good approximation in this case, we construct a directed graph $\mathrm{PBG}(\mathtt{B})$ and check if it enjoys a simple feature. 

First of all, let us present one fundamental property that should be enjoyed by the graph we shall construct. It is clear that if $x$ and $y$ belong to the same orbit of $\mathfrak{G}$, there is a corresponding relation between cells of $\Pi_x^*$ and of $\Pi_y^*$. In fact, suppose $T_x$ is an orbit of $\mathfrak{G}_x$ and $\sigma$ is a permutation in $\mathfrak{G}$ so that $\sigma x = y$. Then $\sigma T_x$ belongs to $\Pi_y^*$, and if $\gamma x = y$ ($\gamma \in \mathfrak{G}$) then $\gamma T_x = \sigma T_x$. Obviously, that map from $\Pi_x^*$ to $\Pi_y^*$ is a one to one correspondence which we use to construct a direct graph associated with a minimal block for $\mathfrak{G}$.

Let $K$ be a minimal block for $\mathfrak{G}$ and $b$ a member of $K$. Apparently $K = \{ \sigma b : \sigma \in \mathfrak{G}_K \}$. Let $T_b$ be an orbit of $\mathfrak{G}_b$ which is contained in $K$. The {\it block graph} $\mathrm{BG}(K)$ with the pair $(K,\{ \sigma T_b : \sigma \in \mathfrak{G}_K \})$ possesses the vertex set $K$, and there is an arc from $\alpha b$ to $\beta b$, {\it i.e.,} $\alpha b \rightarrow \beta b$, if $\beta b$ is in $\alpha\hspace{0.4mm} T_b$, where $\alpha$ and $\beta$ belong to $\mathfrak{G}_K$. Suppose $w \in T_b$ such that $\beta b = \alpha w$. Note that for any permutation $\gamma \in \mathfrak{G}_K$, 
$$
\beta \hspace{0.4mm} b = \alpha \hspace{0.4mm} w \Leftrightarrow \gamma (\beta b) = \gamma(\alpha \hspace{0.4mm} w) 
\Leftrightarrow \gamma \beta b \in \gamma\alpha\hspace{0.4mm} T_b, 
\mbox{ so } \gamma\alpha\hspace{0.4mm} b \rightarrow \gamma\beta b 
\mbox{ by definition. }
$$
Hence $\gamma$ is an automorphism  of the direct graph $\mathrm{BG}(K)$. Consequently if $\sigma \in \mathfrak{G}_K$ {\it s.t.,} $\sigma b \in T_b$ then $b \rightarrow \sigma b \rightarrow \sigma^2 b \rightarrow \sigma^3 b \rightarrow \cdots \rightarrow b$, which implies that there is a strong component in $\mathrm{BG}(K)$.
Moreover, it is easy to check that any strong component of $\mathrm{BG}(K)$ is a block for $\mathfrak{G}$. In fact, suppose $P$ is a strong component of the graph and $\gamma$ a permutation in $\mathfrak{G}_K$. Then $\gamma P$ is also a strong component, and thus $\gamma P \cap P \neq \emptyset$ $\Rightarrow$ $\gamma P = P$. As a result $\mathrm{BG}(K)$ is strong connected since $K$ is a minimal block for $\mathfrak{G}$.

We are now ready to build the direct graph $\mathrm{PBG}(\mathtt{B})$, called {\it pseudo-block graph}, which is similar to $\mathrm{BG}(K)$. Suppose $E(x)$ is a cell of $\ReSt{\Pi[ \oplus V_{\lambda};x ]}{\mathtt{B}}$. The graph $\mathrm{PBG}(\mathtt{B})$ has the vertex set $\mathtt{B}$, and its arc set is determined by $\{ \phi_{xy} E(x) : y \in \mathtt{B} \}$, where $\phi_{xy}$ stands for the corresponding relation between cells of two partitions $\Pi[ \oplus V_{\lambda};x ]$ and $\Pi[ \oplus V_{\lambda};y ]$ induced by the procedure of outputting those two partitions. More precisely there is an arc from $u$ to $v$, {\it i.e.,} $u \rightarrow v$, if $v$ is in $\phi_{xu}\hspace{0.4mm} E(x)$.

Note that $\mathrm{PBG}(\mathtt{B})$ can be constructed with any cell $E(x)$ of $\ReSt{\Pi[ \oplus V_{\lambda};x ]}{\mathtt{B}}$, which is not equal to $\{x\}$, so we select one of them with minimum order to build the graph. Since $\Pi[ \oplus V_{\lambda};x ]$ is an approximation to $\Pi^*_x$, the direct graph $\mathrm{PBG}(\mathtt{B})$ would be strong connected. If it is not the case, we split $\mathtt{B}$ into pieces corresponding to strong connected components of $\mathrm{PBG}(\mathtt{B})$ and select one of components of minimal order as $\mathtt{B}$.

\vspace{2mm}
Recall that  if a subset $S$ of $[n]$ forms a complete configuration then the action of the group $\drm{Aut}{ \ReSt{\oplus_{\lambda} V_{\lambda,S}}{S}}$ on $S$ is the same as the action of $\drm{Sym}{ S }$ on $S$. Hence, it is not necessary to construct $\mathrm{PBG}(\mathtt{B})$ for revealing the structure of 
$\drm{Aut}{ \ReSt{ \oplus_{\lambda} \duspan{V_{\lambda}}{\mathtt{B}} }{\mathtt{B}} }$ action if $\mathtt{B}$ forms a complete configuration, and therefore
\begin{equation*}
\mbox{ the cell } E(x) \mbox{ we select to build } \mathrm{PBG}(\mathtt{B}) \mbox{ must be of order less than }|\mathtt{B}|/2.
\end{equation*}
 
\end{enumerate}

\item Clearly if $B$ is a non-trivial block for $\mathfrak{G}$, then the partition $\Pi^*_{B}$ consisting of orbits of $\mathfrak{G}_{B}$ is equitable, for the stabilizer $\mathfrak{G}_{B}$ of $B$ is a subgroup of $\mathfrak{G}$. Set $Y_{\lambda,\Pi_G^*} = V_{\lambda} \ominus \mathbf{R}_{ \Pi^*_G } V_{\lambda}^{ G / \Pi^*_G }$, {\it i.e.,} $Y_{\lambda,\Pi_G^*}$ is the orthogonal complement of $\mathbf{R}_{ \Pi^*_G } V_{\lambda}^{ G / \Pi^*_G }$ in $V_{\lambda}$ ($\lambda \in \drm{spec}{\dAM{G}}$). Since $\Pi^*_{B}$ is a proper refinement of $\Pi^*_G$ that is also an equitable partition, the characteristic vector $\pmb{R}_{B}$ of $B$ has a non-trivial projection onto the subspace $Y_{\lambda,\Pi_G^*}$ for some $\lambda \in \drm{spec}{ \dAM{G} }$, {\it i.e.,}
$$
\dproj{ Y_{\lambda,\Pi_G^*} }( \pmb{R}_{B} ) \neq \pmb{0}.
$$
We can use this feature and process of outputting the partition $\Pi[ \oplus V_{\lambda};v ]$ to refine $\bar{\Pi}[ \oplus V_{\lambda} ]$ properly, provided that $\mathtt{B} \subsetneq S$ where $S$ is the cell of $\bar{\Pi}[ \oplus V_{\lambda} ]$ we select at the first step.

Recall that $\bar{\Pi}[ \oplus V_{\lambda} ]$ is an equitable partition, so we first list those eigenspaces $V_{\lambda}$ of $\dAM{G}$ such that 
$$
\pmb{p}_{\lambda,\mathtt{B}} :=
 \dproj{ Y_{\lambda,\bar{\Pi}[ \oplus V_{\lambda} ]} }
( \pmb{R}_{\mathtt{B}} ) \neq \pmb{0},
$$ where $Y_{\lambda,\bar{\Pi}[ \oplus V_{\lambda} ]} = V_{\lambda} \ominus \mathbf{R}_{ \bar{\Pi}[ \oplus V_{\lambda} ] } V_{\lambda}^{ G / \bar{\Pi}[ \oplus V_{\lambda} ] }$. For each such subspace $Y_{\lambda,\bar{\Pi}[ \oplus V_{\lambda} ]}$, one can use those three tests for obtaining $\Pi[ V_{\lambda};v ]$ to construct a partition $\Pi[ V_{\lambda};\mathtt{B} ]$ with the vector $\dproj{ V_{\lambda} }( \pmb{e}_v )$ replaced by $\pmb{p}_{\lambda,\mathtt{B}}$. Then we use those three operations for working out $ \Pi[ \oplus V_{\lambda};v ]$ to build a balanced partition that is denoted by $\Pi[ \oplus V_{\lambda};\mathtt{B} ]$.

\end{enumerate}

In summary, 
\begin{equation}\label{Computation-I}
\oplus_{\lambda} V_{\lambda} \longrightarrow \left\{ \Pi[ \oplus V_{\lambda};v ] : v \in V(G) \right\} \longrightarrow \bar{\Pi}[ \oplus V_{\lambda} ] ~ \& ~ \Pi[ \oplus V_{\lambda};\mathtt{B} ].
\end{equation}

\vspace{2mm}
Now let us see how to use those partitions we have erected to decompose eigenspaces of $\dAM{G}$. We first decompose each $V_{\lambda}$ ($\lambda \in \drm{spec}{\dAM{G}}$) by the uniform partition $\bar{\Pi}[ \oplus V_{\lambda} ]$:
\begin{equation*}
V_{\lambda} = 
\mathbf{R}_{\bar{\Pi}[ \oplus V_{\lambda} ]} V_{\lambda}^{G/\bar{\Pi}[ \oplus V_{\lambda} ]} \oplus
Y_{\lambda,\bar{\Pi}[ \oplus V_{\lambda} ]}.
\end{equation*}
Apparently, $\mathbf{R}_{\bar{\Pi}[ \oplus V_{\lambda} ]} V_{\lambda}^{G/\bar{\Pi}[ \oplus V_{\lambda} ]}$ is an $\mathfrak{G}$-invariant subspace, so is $Y_{\lambda,\bar{\Pi}[ \oplus V_{\lambda} ]}$. As a result, in order to uncover the structure of the $\mathfrak{G}$ action on $V_{\lambda}$, we need to decompose the subspace $Y_{\lambda,\bar{\Pi}[ \oplus V_{\lambda} ]}$ further. Suppose $S_1,\ldots,S_t$ are cells of $\bar{\Pi}[ \oplus V_{\lambda} ]$ such that $|S_1| \leq \cdots \leq |S_t|$. Let $X_{\lambda,S_i}$ ($i = 1,\ldots,t$) be the subspace $\duspan{Y_{\lambda,\bar{\Pi}[ \oplus V_{\lambda} ]}}{S_i}$ that is spanned by vectors $\{ \dproj{Y_{\lambda,\bar{\Pi}[ \oplus V_{\lambda} ]}}(\pmb{e}_{x_i}) : x_i \in S_i \}$. Obviously, each cell $S_i$ is invariant under the action of $\mathfrak{G}$, so is the subspace $X_{\lambda,S_i}$ according to Lemma \ref{ProjOperatorCommutative}. Consequently, $\mathfrak{G} = \cap_{i=1}^t \hspace{0.5mm} \drm{Aut}{\oplus_{\lambda} X_{\lambda,S_i}}$, where $\drm{Aut}{\oplus_{\lambda} X_{\lambda,S_i}}$ stands for the permutation group of $[n]$ such that each subspace $X_{\lambda,S_i}$ ($\lambda \in \drm{spec}{\dAM{G}}$) is invariant under the action of those permutations contained in the group. In accordance with our definition of the subspace $X_{\lambda,S_i}$, $\drm{Aut}{\oplus_{\lambda} X_{\lambda,S_i}}$ is determined by its action on $S_i$. Hence, we can deal with subspaces $\oplus_{\lambda} X_{\lambda,S_1},\cdots,\oplus_{\lambda} X_{\lambda,S_t}$ one by one. In fact, there is a simple relation among those subspaces that can simplify our work.

\begin{Lemma}\label{Lem-SeparatingPseudoOrbit} 
Let $\Pi$ be an equitable partition and $C_1$ and $C_2$ two cells of $\Pi$ none of that is a singleton. Suppose $Y_{\lambda,\Pi} = V_{\lambda} \ominus \mathbf{R}_{\Pi} V_{\lambda}^{G/\Pi}$, {\it i.e.,} $Y_{\lambda,\Pi}$ is the orthogonal complement of $\mathbf{R}_{\Pi} V_{\lambda}^{G/\Pi}$ in $V_{\lambda}$. Then for any two vertices $u_2,v_2$ belonging to  $C_2$,
$\dproj{ \duspan{Y_{\lambda,\Pi}}{C_1}  }(\pmb{e}_{u_2}) = \dproj{ \duspan{Y_{\lambda,\Pi}}{C_1}  }(\pmb{e}_{v_2})$ if and only if $\drm{span}{ \{ Y_{\lambda,\Pi} : C_1 \} } \perp \drm{span}{ \{ Y_{\lambda,\Pi} : C_2 \} }$, where the subspace $\drm{span}{ \{ Y_{\lambda,\Pi} : C_i \} }$ ($i=1,2$) is spanned by vectors $\{ \dproj{ Y_{\lambda,\Pi} }( \pmb{e}_{x_i} ) : x_i \in C_i \}$.
\end{Lemma}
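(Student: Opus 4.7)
\smallskip

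\noindent\textbf{Proof plan.} Write $X_i := \drm{span}\{Y_{\lambda,\Pi} : C_i\}$ for $i=1,2$, so $X_1,X_2\subseteq Y_{\lambda,\Pi}$. The whole argument turns on a single bookkeeping identity: because $\Pi$ is equitable, the column space of $\mathbf{R}_{\Pi}$ is $\dAM{G}$-invariant, and hence its intersection with each eigenspace $V_{\lambda}$ is exactly $\mathbf{R}_{\Pi} V_{\lambda}^{G/\Pi}$. Since the characteristic vector $\pmb{R}_{C_2}$ lies in the column space of $\mathbf{R}_{\Pi}$, its $V_{\lambda}$-component lies in $\mathbf{R}_{\Pi} V_{\lambda}^{G/\Pi}$, and therefore
$$
\dproj{Y_{\lambda,\Pi}}(\pmb{R}_{C_2}) \;=\; \sum_{x\in C_2} \dproj{Y_{\lambda,\Pi}}(\pmb{e}_x) \;=\; \pmb{0}.
$$
Applying $\dproj{X_1}$ (and noting $X_1\subseteq Y_{\lambda,\Pi}$, so the two projections commute in the sense that $\dproj{X_1}\circ\dproj{Y_{\lambda,\Pi}}=\dproj{X_1}$ on $\R^n$) yields the core identity
$$
\sum_{x\in C_2} \dproj{X_1}(\pmb{e}_x) \;=\; \pmb{0}.
$$

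\noindent The sufficiency ($X_1\perp X_2 \Rightarrow$ equal projections) is then immediate: for every $u_2\in C_2$, the vector $\dproj{Y_{\lambda,\Pi}}(\pmb{e}_{u_2})$ belongs to $X_2$ by definition, so if $X_1\perp X_2$ it is annihilated by $\dproj{X_1}$; hence $\dproj{X_1}(\pmb{e}_{u_2})=\dproj{X_1}\big(\dproj{Y_{\lambda,\Pi}}(\pmb{e}_{u_2})\big)=\pmb{0}$ for every $u_2\in C_2$, and in particular all these projections coincide (all being $\pmb{0}$).

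\noindent For the necessity, assume the common value $\pmb{c}:=\dproj{X_1}(\pmb{e}_{u_2})$ is independent of $u_2\in C_2$. Summing over $u_2\in C_2$ and using the core identity gives $|C_2|\,\pmb{c}=\pmb{0}$, hence $\pmb{c}=\pmb{0}$. Thus $\dproj{Y_{\lambda,\Pi}}(\pmb{e}_{u_2})\perp X_1$ for every $u_2\in C_2$, and since these vectors span $X_2$, we conclude $X_2\perp X_1$.

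\noindent\textbf{Main obstacle.} The only delicate point is verifying $\dproj{Y_{\lambda,\Pi}}(\pmb{R}_{C_2})=\pmb{0}$, which is where the equitability of $\Pi$ is actually used: one needs that spectral projections onto $V_{\lambda}$ preserve the $\dAM{G}$-invariant column space of $\mathbf{R}_{\Pi}$ (so that $\dproj{V_{\lambda}}(\pmb{R}_{C_2})\in \mathbf{R}_{\Pi} V_{\lambda}^{G/\Pi}$), or equivalently that every vector of $Y_{\lambda,\Pi}$ sums to zero on each cell of $\Pi$. Once this is in place, the rest of the argument is a clean two-line averaging on each side of the equivalence.
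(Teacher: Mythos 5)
Your proof is correct and takes essentially the same route as the paper: sufficiency follows because $\dproj{Y_{\lambda,\Pi}}(\pmb{e}_{u_2})$ lies in $\drm{span}{\{Y_{\lambda,\Pi}:C_2\}}$, and necessity rests on the same key fact — that $\pmb{R}_{C_2}\perp Y_{\lambda,\Pi}$ (the paper writes it as $\langle \dproj{Y_{\lambda,\Pi}}(\pmb{e}_{x_1}),\pmb{R}_{C_2}\rangle = 0$, you as $\dproj{Y_{\lambda,\Pi}}(\pmb{R}_{C_2})=\pmb{0}$) — combined with averaging over the cell $C_2$; your vector-valued averaging and the paper's inner-product version are the same argument.
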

\begin{proof}[\bf Proof]
First of all, one should note that because $\Pi$ is equitable, our assumption that $|C_i| \geq 2$ ($i=1,2$) implies that $\drm{span}{ \{ Y_{\lambda,\Pi} : C_i \} }$ is not trivial for some eigenvalue $\lambda$, {\it i.e.,} $\drm{span}{ \{ Y_{\lambda,\Pi} : C_i \} } \neq \pmb{0}$.

It is easy to see the sufficiency is true, since if $\drm{span}{ \{ Y_{\lambda,\Pi} : C_1 \} } \perp \drm{span}{ \{ Y_{\lambda,\Pi} : C_2 \} }$ then $\dproj{ \duspan{Y_{\lambda,\Pi}}{C_1}  }(\pmb{e}_{y_2}) = \pmb{0}$, $\forall y_2 \in C_2$.

As to the necessity, the key fact is that for any vertex $x_1 \in C_1$, $\langle \dproj{Y_{\lambda,\Pi}}(\pmb{e}_{x_1}),\pmb{R}_{C_2} \rangle = 0$, for $\dproj{Y_{\lambda,\Pi}}(\pmb{e}_{x_1}) \perp \mathbf{R}_{\Pi} V_{\lambda}^{G/\Pi}$. Notice that our assumption is equivalent to that for any two members $u_2$ and $v_2$ of $C_2$,
$$
\langle \dproj{Y_{\lambda,\Pi}}(\pmb{e}_{x_1}),\pmb{e}_{u_2} \rangle = 
\langle \dproj{Y_{\lambda,\Pi}}(\pmb{e}_{x_1}),\pmb{e}_{v_2} \rangle,
$$ so we have 
$$
\langle \dproj{Y_{\lambda,\Pi}}(\pmb{e}_{x_1}),|C_2| \cdot \pmb{e}_{u_2} \rangle =
\langle \dproj{Y_{\lambda,\Pi}}(\pmb{e}_{x_1}), \sum_{z_2 \in C_2} \pmb{e}_{z_2} \rangle =
\langle \dproj{Y_{\lambda,\Pi}}(\pmb{e}_{x_1}), \pmb{R}_{C_2} \rangle = 0.
$$
Consequently, $\langle \dproj{Y_{\lambda,\Pi}}(\pmb{e}_{x_1}),\pmb{e}_{y_2} \rangle = 0$, $\forall y_2 \in C_2$.
\end{proof}

In accordance with Lemma \ref{Lem-SeparatingPseudoOrbit}, if there is an eigenvalue $\lambda$ of $\dAM{G}$ so that $X_{\lambda,S_i}$ is not orthogonal to  $X_{\lambda,S_j}$ $(i < j)$, $S_j$ must be split into as least two parts due to projections $\{ \dproj{X_{\lambda,S_i}}( \pmb{e}_{x_j} ) : x_j \in S_j \}$, so we can first work out the group $\drm{Aut}{\oplus_{\lambda} X_{\lambda,S_i}}$ and then use the information to find symmetries represented in $\oplus_{\lambda} X_{\lambda,S_j}$. The detail of that process will be presented in the next part. As a matter of fact, we may also employ $S_j$ to split $X_{\lambda,S_i}$ in this case, for the subspace $\duspan{X_{\lambda,S_i}}{S_j}$ is $\mathfrak{G}$-invariant according to Lemma \ref{ProjOperatorCommutative}. Hence if $\duspan{X_{\lambda,S_i}}{S_j} \subsetneq X_{\lambda,S_i}$, then we can decompose $X_{\lambda,S_i}$ into two subspaces $\duspan{X_{\lambda,S_i}}{S_j}$ and its orthogonal complement in $X_{\lambda,S_i}$, which makes our work of determining $\drm{Aut}{\oplus_{\lambda} X_{\lambda,S_i}}$ more efficiently.

In the case that $\oplus_{\lambda} X_{\lambda,S_i} \perp \oplus_{\lambda} X_{\lambda,S_j}$, we can first deal with those two subspaces separately and then incorporate the information about $\drm{Aut}{\oplus_{\lambda} X_{\lambda,S_i}}$ and $\drm{Aut}{\oplus_{\lambda} X_{\lambda,S_j}}$ to obtain $\drm{Aut}{\oplus_{\lambda} \big( X_{\lambda,S_i} \oplus X_{\lambda,S_j} \big)}$. More precisely, suppose $S_{i_1},\ldots,S_{i_l}$ are cells of the partition $\bar{\Pi}[ \oplus V_{\lambda} ]$ such that $i_k$ ($k = 1,\ldots,l$) is the minimum integer in $\{1,\ldots,t\}$ {\it s.t.,} $X_{\lambda,S_{i_k}} \perp \big( \oplus^{k-1}_{j=0} X_{\lambda,S_{i_j}} \big)$, where  $X_{\lambda,S_{i_0}} = X_{\lambda,S_{1}}$. Let us make a further assumption that if there is a cell $S_j$ in $\bar{\Pi}[ \oplus V_{\lambda} ] \setminus \{ S_{i_0},\ldots,S_{i_l} \}$ such that $X_{\lambda,S_j}$ is not orthogonal to some subspace $X_{\lambda,S_{i_k}}$ ($0\leq k \leq l$) then $\duspan{X_{\lambda,S_{i_k}}}{S_j} = X_{\lambda,S_{i_k}}$, otherwise we can decompose the subspace $X_{\lambda,S_{i_k}}$ in the way explained in the last paragraph. As a result, 
\begin{equation}\label{SubspaceDecomposition-I}
Y_{\lambda,\bar{\Pi}[ \oplus V_{\lambda} ]} = 
\oplus_{k=0}^l X_{\lambda,S_{i_k}} \oplus Z_{\lambda,S_1},
\end{equation}
where the subspace $Z_{\lambda,S_1}$ is the orthogonal complement of $\oplus_{k=0}^l X_{\lambda,S_{i_k}}$ in $Y_{\lambda,\bar{\Pi}[ \oplus V_{\lambda} ]}$. It is plain to see that we can decompose $Z_{\lambda,S_1}$ by means of subspaces not orthogonal to some of subspaces $X_{\lambda,S_{i_0}},\ldots,X_{\lambda,S_{i_l}}$ in a way similar to that of decomposing $Y_{\lambda,\bar{\Pi}[ \oplus V_{\lambda} ]}$. By repeating this process, we can ultimately obtain an orthogonal decomposition for $Y_{\lambda,\bar{\Pi}[ \oplus V_{\lambda} ]}$.

As to those subspaces contained in the first part, there are two possibilities: 
\begin{enumerate}

\item[(A)] $\forall S_{p'}\in\bar{\Pi}[ \oplus V_{\lambda} ]$, $\duspan{\oplus_{\lambda} X_{\lambda,S_{i_p}}}{S_{p'}} \neq \pmb{0} \Rightarrow X_{\lambda,S_{p'}} \perp \oplus_{\lambda} X_{\lambda,S_{i_q}}$, or \\ $\forall S_{q'}\in\bar{\Pi}[ \oplus V_{\lambda} ]$, $\duspan{\oplus_{\lambda} X_{\lambda,S_{i_q}}}{S_{q'}} \neq \pmb{0} \Rightarrow X_{\lambda,S_{q'}} \perp \oplus_{\lambda} X_{\lambda,S_{i_p}}$, \\ where $p,q \in [l]$ and $p \neq q$.

\vspace{2mm}
In this case $\drm{Aut}{\oplus_{\lambda} X_{\lambda,S_{i_p}}}$ has no impact on $\drm{Aut}{\oplus_{\lambda} X_{\lambda,S_{i_q}}}$ and vice verse, so we can deal with those two subspaces $\oplus_{\lambda} X_{\lambda,S_{i_p}}$ and $\oplus_{\lambda} X_{\lambda,S_{i_q}}$ separately.

\item[(B)] $\exists \hspace{0.6mm} S_j \in\bar{\Pi}[ \oplus V_{\lambda} ]$, $\duspan{\oplus_{\lambda} X_{\lambda,S_{i_p}}}{S_{j}} \neq \pmb{0}$ and $\duspan{\oplus_{\lambda} X_{\lambda,S_{i_q}}}{S_{j}} \neq \pmb{0}$, where $p,q \in [l]$ and $p \neq q$.

\vspace{2mm}
In order to determine the group $\drm{Aut}{\oplus_{\lambda} \big( X_{\lambda,S_{i_p}} \oplus X_{\lambda,S_{i_q}} \big)}$ in this case, we need to compare the effect of $\drm{Aut}{\oplus_{\lambda} X_{\lambda,S_{i_p}} }$ action on $S_j$ with that of $\drm{Aut}{\oplus_{\lambda} X_{\lambda,S_{i_q}} }$ action on $S_j$. As we have seen in the 2nd section, to do so we only need to compare a series of partitions consisting of orbits of stabilizers, each of which fixes a sequence of members of $S_j$, so that can be down efficiently.

\end{enumerate}

Now let us see how to cope with the subspace $X_{\lambda,S_1}$. Due to our discussion above, we assume that $\duspan{X_{\lambda,S_1}}{S_j} = X_{\lambda,S_1}$ or $\pmb{0}$, $\forall j > 1$, so we cannot decompose  $X_{\lambda,S_1}$ further by means of $S_j$. Recall that the equitable partition $\Pi[ \oplus V_{\lambda} ; \mathtt{B} ]$ ($x\in S_1$) is built for refining $\bar{\Pi}[ \oplus V_{\lambda} ]$ in the case that $\mathtt{B} \subsetneq S_1$, so we can use the partition to decompose $X_{\lambda,S_1}$:
\begin{equation*}
X_{\lambda,S_1} =
\left( X_{\lambda,S_1} \cap 
\mathbf{R}_{ \Pi[ \oplus V_{\lambda} ; \mathtt{B} ] } V_{\lambda}^{G/\Pi[ \oplus V_{\lambda} ; \mathtt{B} ]} \right) 
\oplus Y_{\lambda,S_1,\Pi_{\mathtt{B}}},
\end{equation*}
where $Y_{\lambda,S_1,\Pi_{\mathtt{B}}}$ is the orthogonal complement of the first subspace in $X_{\lambda,S_1}$. It is easy to see that $X_{\lambda,S_1} \cap \mathbf{R}_{ \Pi[ \oplus V_{\lambda} ; \mathtt{B} ] } V_{\lambda}^{G/\Pi[ \oplus V_{\lambda} ; \mathtt{B} ]} $ is an $\mathfrak{G}_{\mathtt{B}}$-invariant subspace, so is $Y_{\lambda,S_1,\Pi_{\mathtt{B}}}$. Consequently, in order to uncover the structure of the $\mathfrak{G}_{\mathtt{B}}$ action on $X_{\lambda,S_1}$, we need to decompose the subspace $Y_{\lambda,S_1,\Pi_{\mathtt{B}}}$ further. 

Since $\mathtt{B}$ may represent a block for $\mathfrak{G}$, there could be a block system of $\mathfrak{G}$  containing $\mathtt{B}$ as one member. More precisely, one can obtain, by carrying out first two operations of outputting $\Pi[ \oplus V_{\lambda} ; \mathtt{B} ]$ on the rest of members of $S_1$, not only one subset $\mathtt{B}$ but a group of subsets $\mathtt{B}_1 = \mathtt{B},\mathtt{B}_2,\ldots,\mathtt{B}_q$ of $S_1$. Furthermore, there is a partition $\stackrel{\scriptscriptstyle \Box }{\Pi}[ \oplus V_{\lambda} ; \mathtt{B} ]$ of $\{ \mathtt{B}_i : i = 1,\ldots,q \}$ induced by $\ReSt{ \Pi[ \oplus V_{\lambda} ; \mathtt{B} ] }{S_1}$ that is the partition of $S_1$ consisting of cells $\Pi[ \oplus V_{\lambda} ; \mathtt{B} ]$ each of which is contained in $S_1$. Let $\mathtt{L}_1=\mathtt{B}_1,\mathtt{L}_2,\ldots,\mathtt{L}_c$ be cells of $\stackrel{\scriptscriptstyle \Box }{\Pi}[ \oplus V_{\lambda} ; \mathtt{B} ]$ such that $|\mathtt{L}_1| \leq \cdots \leq |\mathtt{L}_c|$. Then we can use those cells to split the subspace $Y_{\lambda,S_1,\Pi_{\mathtt{B}}}$. 

Let $X_{\lambda,S_1,\mathtt{L}_i}$ ($i = 1,\ldots,c$) denote the subspace $\duspan{Y_{\lambda,S_1,\Pi_{\mathtt{B}}}}{\mathtt{L}_i}$. Clearly each cell $\mathtt{L}_i$ is invariant under the action of $\mathfrak{G}_{\mathtt{B}}$, so is the subspace $\oplus_{\lambda} X_{\lambda,S_1,\mathtt{L}_i}$ according to Lemma \ref{ProjOperatorCommutative}, $i=1,\ldots,c$. On the other hand, each $\mathtt{L}_i$ may contain some of cells of $\ReSt{ \Pi[ \oplus V_{\lambda} ; \mathtt{B} ] }{S_1}$, so we may split $X_{\lambda,S_1,\mathtt{L}_i}$ further by means of those cells relevant. Note that $\Pi[ \oplus V_{\lambda} ; \mathtt{B} ]$ is an equitable partition, so the lemma \ref{Lem-SeparatingPseudoOrbit} works well for cells of $\Pi[ \oplus V_{\lambda} ; \mathtt{B} ]$. As a result, we can finally decompose the subspace $Y_{\lambda,S_1,\Pi_{\mathtt{B}}}$ in a way similar to (\ref{SubspaceDecomposition-I}).

\vspace{2mm}
We now turn to the subspace $X_{\lambda,S_1,\mathtt{B}}$. Let $z$ be a vertex of $\mathtt{B}$. Then $X_{\lambda,S_1,\mathtt{B}}$ can be decomposed into a number of smaller subspaces by means of the equitable partition $\Pi[ \oplus V_{\lambda} ; z ]$:
\begin{equation*}
X_{\lambda,S_1,\mathtt{B}} =
\left( X_{\lambda,S_1,\mathtt{B}} \cap 
\mathbf{R}_{ \Pi[ \oplus V_{\lambda} ; z ] } V_{\lambda}^{G/\Pi[ \oplus V_{\lambda} ; z ]} \right) 
\oplus Y_{\lambda,S_1,\mathtt{B}}.
\end{equation*}
Again we need to split the subspace $Y_{\lambda,S_1,\mathtt{B}}$ further for revealing the structure of the action of $\mathfrak{G}_z$.

Let $\ReSt{ \Pi[\oplus V_{\lambda} ; z] }{\mathtt{B}}$ be the partition of $\mathtt{B}$ consisting of cells $\Pi[\oplus V_{\lambda} ; z]$ each of which is contained in $\mathtt{B}$. Suppose $\ReSt{ \Pi[\oplus V_{\lambda} ; z] }{\mathtt{B}} = \big\{ C_1^{z}=\{z\},C_2^{z},\ldots,C_l^{z} \big\}$ and $|C_2^{z}| \leq \cdots \leq |C_l^{z}|$. Let $X_{\lambda,S_1,\mathtt{B},C^{z}_i}$ denote the subspace $\duspan{Y_{\lambda,S_1,\mathtt{B}}}{C^{z}_i}$, $i=2,\ldots,l$. Then we can employ operations for obtaining $\Pi[ \oplus V_{\lambda} ; v ]$ to work out a balanced partition $\Pi[ \oplus_{\lambda,i} \hspace{0.5mm} X_{\lambda,S_1,\mathtt{B},C^{z}_i} ; y ]$ for every $y\in \mathtt{B}$. One may notice the difference between $\oplus_{\lambda} V_{\lambda}$ and $\oplus_{\lambda,i} \hspace{0.5mm} X_{\lambda,S_1,\mathtt{B},C^{z}_i}$: any two eigenspaces of $\dAM{G}$ are orthogonal but it could be the case that $\exists \hspace{0.6mm} X_{\lambda,S_1,\mathtt{B},C^{z}_i}$ and $X_{\lambda,S_1,\mathtt{B},C^{z}_j}$ ($i<j$) {\it s.t.,} $X_{\lambda,S_1,\mathtt{B},C^{z}_i} \cap X_{\lambda,S_1,\mathtt{B},C^{z}_j} \supsetneq \pmb{0}$. So the 2nd sum is not even a direct one. A moment's reflection would show, however, that we can decompose $Y_{\lambda,S_1,\mathtt{B}}$ by means of $X_{\lambda,S_1,\mathtt{B},C^{z}_2},\cdots,X_{\lambda,S_1,\mathtt{B},C^{z}_l}$ in a way that is the same as what we did for $Y_{\lambda,\bar{\Pi}[ \oplus V_{\lambda} ]}$ in (\ref{SubspaceDecomposition-I}), and obtain an orthogonal decomposition for $Y_{\lambda,S_1,\mathtt{B}}$. For simplicity, we do not introduce a new symbol here for that decomposition.

After that, we can work out an uniform partition $\bar{\Pi}[ \oplus_{\lambda,i} \hspace{0.5mm} X_{\lambda,S_1,\mathtt{B},C^{z}_i} ]$ for $\mathtt{B}$ in a way similar to building $\bar{\Pi}[\oplus V_{\lambda}]$ from $\{ \Pi[\oplus V_{\lambda} ; ] : v \in [n] \}$. Moreover, we can also work out a balanced partition $\Pi[ \oplus_{\lambda,i} \hspace{0.5mm} X_{\lambda,S_1,\mathtt{B},C^{z}_i} ; \mathtt{B}_y ]$ as an approximation to some block for $\drm{Aut}{\oplus_{\lambda,i} \hspace{0.5mm} X_{\lambda,S_1,\mathtt{B},C^{z}_i}}$.

In brief, we can obtain the following by means of the operations outputting (\ref{Computation-I}):
\begin{align*}
\oplus_{\lambda,i} \hspace{0.5mm} X_{\lambda,S_1,\mathtt{B},C^{z}_i} & \rightarrow 
\left\{ \Pi[ \oplus_{\lambda,i} \hspace{0.5mm} X_{\lambda,S_1,\mathtt{B},C^{z}_i};x_i ] : x_i \in C_i^{z} \right\} ~~ (i=1,\ldots,l) \\
& \rightarrow \bar{\Pi}[ \oplus_{\lambda,i} \hspace{0.5mm} X_{\lambda,S_1,\mathtt{B},C^{z}_i} ] ~ \& ~ 
\Pi[ \oplus_{\lambda,i} \hspace{0.5mm} X_{\lambda,S_1,\mathtt{B},C^{z}_i} ; \mathtt{B}_y ].
\end{align*}
Apparently, $\bar{\Pi}[ \oplus_{\lambda,i} \hspace{0.5mm} X_{\lambda,S_1,\mathtt{B},C^{z}_i} ]$ is a refinement of $\ReSt{\Pi[\oplus V_{\lambda} ; z]}{\mathtt{B}}$. If the latter is refined properly by the first one, then we can decompose those subspaces $X_{\lambda,S_1,\mathtt{B},C^{z}_2},\cdots,X_{\lambda,S_1,\mathtt{B},C^{z}_l}$ further for some of eigenvalues of $\dAM{G}$.

\subsection{Assembling Subspaces }

\vspace{2.5mm}
Recall that $S_1,\ldots,S_t$ are cells of $\bar{\Pi}[ \oplus V_{\lambda} ]$ such that $|S_1|\leq\cdots\leq |S_t|$, so if $t \geq 2$ then $|S_1| \leq n/2$, and thus each subspace $X_{\lambda,S_1}$ ($\lambda\in\drm{spec}{\dAM{G}}$) is of dimension not more than $n/2$. Accordingly we can first determine the group $\drm{Aut}{\oplus_{\lambda} X_{\lambda,S_1}}$ and then use the information to deal with the rest of cells. As a result, $\bar{\Pi}[ \oplus V_{\lambda} ] = \{ [n] \}$ in the worst case.

Notice that $\mathtt{B}$ passes the first two tests in the process of building the partition $\Pi[ \oplus V_{\lambda} ; \mathtt{B} ]$, each of which is a necessary condition for being a minimal block for $\mathfrak{G}$, so if $\mathtt{B} \subsetneq [n]$ then $|\mathtt{B}| \leq n/2$. Hence again we can first determine the group $\drm{Aut}{\oplus_{\lambda} X_{\lambda,S_1,\mathtt{B}}}$ and then use the information to deal with other cells $\mathtt{S}_2,\ldots,\mathtt{S}_c$ of $\Pi[ \oplus V_{\lambda} ; \mathtt{B} ]$.  
$$\mbox{Therefore in the worst case, }\mathtt{B} = [n].$$

Despite the fact that our effort to split eigenspaces of $\dAM{G}$ by using cells of $\Pi[ \oplus V_{\lambda} ; \mathtt{B} ]$ fails in the case that $\mathtt{B} = [n]$, we know according to the 2nd test for building $\mathtt{B}$ that each partition $\Pi[ \oplus V_{\lambda};v ]$ ($v\in [n]$) contains exactly one singleton --- $\{ v \}$, and the direct graph $\mathrm{PBG}(\mathtt{B}) = (\mathtt{B},\{ \phi_{xy} E(x) : y \in \mathtt{B} \})$ is strong connected. It is the 2nd relation that offers us a powerful apparatus for dealing with the case that there is a big cell $C^v_m$ in $\Pi[ \oplus V_{\lambda};v ]$ such that $|C^v_m| > n / 2$.

As a matter of fact, there is in the case $\bar{\Pi}[ \oplus V_{\lambda} ] = \{ [n] \}$ another important property we can use to deal with the big cell of $\Pi[ \oplus V_{\lambda};v ]$, which has been stated in the section 1.2. Recall that $C_1^v=\{v\},C_2^v,\ldots,C_m^v$ are the cells of the partition $\Pi[\oplus V_{\lambda} ; v]$ such that $|C_2^v| \leq \cdots \leq |C_m^v|$ and $m\geq 3$. Accordingly we can single out two subspaces of $V_{\lambda}$: 
$$ Y_{\lambda, v} = V_{\lambda} \ominus \mathbf{R}_{\Pi[\oplus V_{\lambda} ; v] } V_{\lambda}^{G/\Pi[\oplus V_{\lambda} ; v]} \mbox{ and } X_{\lambda,v,m-1} = \drm{span}{\{ Y_{\lambda,v} : \cup_{i=2}^{m-1} C_i^v \}},
$$
where $\lambda \in \drm{spec}{\dAM{G}}$.

\vspace{2.5mm}
\noindent {\bf Lemma 7.} {\it Suppose $\bar{\Pi}[ \oplus V_{\lambda} ]$ contains only one cell $[n]$. If $|C_m^v| > n/2$ then one of following two cases occurs. 
\begin{enumerate}

\item[i)] The subspace $\duspan{\oplus_{\lambda \hspace{0.5mm} \in \hspace{0.5mm} \drm{spce}{\dAM{G}}}X_{\lambda,v,m-1}}{C_m^v}$ is non-trivial.

\item[ii)] For any vertex $x$ of $[n] \setminus C_m^v$, $C_m^x = C_m^v$ where $C_m^x$ denotes the biggest cell of $\Pi[\oplus V_{\lambda} ; x]$.

\end{enumerate}}\vspace{2.5mm}

Obviously the vertex $v$ is contained in a singleton $\{v\}$ as a cell of $\Pi[\oplus V_{\lambda} ; v]$, so if $\Pi[\oplus V_{\lambda} ; v]$ possesses only two cells then $\Pi[\oplus V_{\lambda} ; v] = \{ \{v\}, [n]\setminus \{v\} \}$. As a result, if $\bar{\Pi}[ \oplus V_{\lambda} ]$ contains only one cell and $\Pi[\oplus V_{\lambda} ; v]$ contains only two cells then the graph $G$ is actually isomorphic to $K_n$, the complete graph of order $n$. On the other hand, it is easy to verify that if $\bar{\Pi}[ \oplus V_{\lambda} ]$ contains only one cell then $G$ is a regular graph. 

\begin{proof}[\bf Proof]
We assume that $\drm{span}{\{\oplus_{\lambda} \hspace{0.5mm} X_{\lambda,v,m-1} : C_m^v \}} = \pmb{0}$. Let $Z_{\lambda,v,m}$ denote the subspace $\duspan{Y_{\lambda,v}}{ C_m^v }$. Then $X_{\lambda,v,m-1} \perp Z_{\lambda,v,m}$. Note that 
$$
V_{\lambda} = 
V_{\lambda,\Pi_v} \oplus
X_{\lambda,v,m-1} \oplus Z_{\lambda,v,m},
$$
where $V_{\lambda,\Pi_v}$ stands for the subspace $\mathbf{R}_{\Pi[\oplus V_{\lambda} ; v] } V_{\lambda}^{G/\Pi[\oplus V_{\lambda} ; v]}$, so those three subspaces are orthogonal to each other. Consequently, for any $w\in C_m^v$,
\begin{equation}\label{Relation-A}
\dproj{ V_{\lambda} }( \pmb{e}_w ) = 
\dproj{ V_{\lambda,\Pi_v} }( \pmb{e}_w ) + \dproj{ Z_{\lambda,v,m} }( \pmb{e}_w ),
\end{equation} 
and thus 
$\dproj{ V_{\lambda,\Pi_v} \oplus X_{\lambda,v,m-1} }( \pmb{e}_{w} ) = \frac{1}{|C_m^v|} \cdot \dproj{ V_{\lambda} }( \pmb{R}_{C_m^v} )$, where $\pmb{R}_{C_m^v}$ is the characteristic vector of the subset $C_m^v$.

Let $\pmb{p}_{\lambda,u}$ denote the projection $\dproj{ V_{\lambda} }( \pmb{e}_u )$, $u \in [n]$. It is easy to check that for any vertex $x \in [n]\setminus C_m^v$,
\begin{equation}\label{Property-A}
\left\langle \pmb{p}_{\lambda,x},\pmb{p}_{\lambda,w'}  \right\rangle = 
\left\langle \pmb{p}_{\lambda,x},\pmb{p}_{\lambda,w''} \right\rangle, ~~ \forall w',w''\in C_m^v.
\end{equation}


\vspace{2mm}
let $\pmb{p}_{\lambda,C_m^v}$ denote $\dproj{ V_{\lambda} }( \pmb{R}_{C_m^v} )$. We first consider a simple case.

\begin{case}\label{Case-BB}
$\drm{span}{\{\pmb{p}_{\lambda,v}\}} = \drm{span}{\{\pmb{p}_{\lambda,C_m^v}\}}$.
\end{case}

Suppose $V_{\lambda}$ is an eigenspace of $\dAM{G}$ such that $\pmb{p}_{\lambda,v} \neq \pmb{p}_{\lambda,x}$, where $x$ is taken from $[n]\setminus\big( \{v\}\cup C_m^v \big)$. Then for any $w\in C_m^v$, 
$\left\langle \pmb{p}_{\lambda,v},\pmb{p}_{\lambda,w}  \right\rangle \neq \left\langle \pmb{p}_{\lambda,x},\pmb{p}_{\lambda,w}  \right\rangle$. Combining the relation (\ref{Property-A}) with the condition that $|C_m^v| > n/2$, one can readily see that there is no such a big cell $C_m^x$ of size $|C_m^v|$ in $\Pi[ \oplus V_{\lambda}; x ]$ so that 
for any $z\in C_m^x$, 
$\left\langle \pmb{p}_{\lambda,x},\pmb{p}_{\lambda,z}  \right\rangle = \left\langle \pmb{p}_{\lambda,v},\pmb{p}_{\lambda,w}  \right\rangle$, 
which contradicts the assumption that the uniform partition $\bar{\Pi}[ \oplus V_{\lambda} ]$ contains only one cell $[n]$. As a result, $\drm{span}{\{ X_{\lambda,v,m-1} : C_m^v \}} \neq \pmb{0}$ in this case. 

\begin{case}\label{Case-AA}
$\drm{span}{\{\pmb{p}_{\lambda,v}\}} \neq \drm{span}{\{\pmb{p}_{\lambda,C_m^v}\}}$.
\end{case}

\begin{Claim}
Let $w$ be a vertex of $C_m^v$ and $x$ a vertex of $[n]\setminus\big( \{v\} \cup C_m^v \big)$. Then
\begin{equation}\label{Property-B}
\left\langle \pmb{p}_{\lambda,v},\pmb{p}_{\lambda,w}  \right\rangle = \left\langle \pmb{p}_{\lambda,x},\pmb{p}_{\lambda,w} \right\rangle.
\end{equation}
Moreover, if $\lambda$ is not the biggest eigenvalue of $\dAM{G}$ then $\left\langle \pmb{p}_{\lambda,v},\pmb{p}_{\lambda,w}  \right\rangle < 0$.
\end{Claim}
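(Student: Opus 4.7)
The plan is to exploit the orthogonal decomposition $V_{\lambda}=V_{\lambda,\Pi_v}\oplus X_{\lambda,v,m-1}\oplus Z_{\lambda,v,m}$ already noted in the proof (genuinely orthogonal under the standing hypothesis that case i) of the lemma has been ruled out, so $X_{\lambda,v,m-1}\perp Z_{\lambda,v,m}$), together with the equitability of $\Pi[\oplus V_{\lambda};v]$ and the relation (\ref{Property-A}). Since $\{v\}$ is a cell of $\Pi[\oplus V_{\lambda};v]$, one has $\pmb{p}_{\lambda,v}\in V_{\lambda,\Pi_v}$, while for every $w\in C_m^v$ the identity (\ref{Relation-A}) gives $\pmb{p}_{\lambda,w}=\frac{1}{|C_m^v|}\pmb{p}_{\lambda,C_m^v}+\dproj{Z_{\lambda,v,m}}(\pmb{e}_w)$ with vanishing $X_{\lambda,v,m-1}$-component.

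The key structural step I would establish is that $\dproj{Z_{\lambda,v,m}}(\pmb{e}_x)=\pmb{0}$ whenever $x\in[n]\setminus C_m^v$. Pairing $\pmb{e}_x$ with each spanning vector $\pmb{p}_{\lambda,w'}-\frac{1}{|C_m^v|}\pmb{p}_{\lambda,C_m^v}$ of $Z_{\lambda,v,m}$ produces $\langle\pmb{p}_{\lambda,x},\pmb{p}_{\lambda,w'}\rangle-\frac{1}{|C_m^v|}\langle\pmb{p}_{\lambda,x},\pmb{p}_{\lambda,C_m^v}\rangle$, which vanishes because (\ref{Property-A}) forces $\langle\pmb{p}_{\lambda,x},\pmb{p}_{\lambda,w'}\rangle$ to coincide with the average of $\langle\pmb{p}_{\lambda,x},\pmb{p}_{\lambda,w''}\rangle$ over $w''\in C_m^v$. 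Combined with the previous bullet, both inner products in the Claim collapse to coordinates of $\pmb{p}_{\lambda,C_m^v}$:
\[
\langle\pmb{p}_{\lambda,v},\pmb{p}_{\lambda,w}\rangle=\tfrac{1}{|C_m^v|}(\pmb{p}_{\lambda,C_m^v})_v,\qquad \langle\pmb{p}_{\lambda,x},\pmb{p}_{\lambda,w}\rangle=\tfrac{1}{|C_m^v|}(\pmb{p}_{\lambda,C_m^v})_x,
\]
so the first assertion of the Claim reduces to showing that $\pmb{p}_{\lambda,C_m^v}$ is constant on $[n]\setminus C_m^v$.

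The step I expect to be the most delicate is establishing this last coordinate constancy. To prove it I would combine the Case~2 hypothesis, that $\pmb{p}_{\lambda,v}$ and $\pmb{p}_{\lambda,C_m^v}$ span distinct lines in $V_{\lambda,\Pi_v}$, with the uniformity $\bar{\Pi}[\oplus V_{\lambda}]=\{[n]\}$. Uniformity provides, for each $x\in[n]\setminus(\{v\}\cup C_m^v)$, a cell-correspondence $\phi_{vx}$ between $\Pi[\oplus V_{\lambda};v]$ and $\Pi[\oplus V_{\lambda};x]$; because $|C_m^v|>n/2$ the identifications $\{v\}\leftrightarrow\{x\}$ and $C_m^v\leftrightarrow C_m^x$ are forced. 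The multiset identity (\ref{Equ-OrbitsApproximation}) applied with $C_i=C_m^v$ and $C_j=\{v\}$ then yields the desired coordinate equality once the linear independence from Case~2 is used to pin down the relative scaling.

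For the moreover part the argument is short. The hypothesis $\bar{\Pi}[\oplus V_{\lambda}]=\{[n]\}$ makes $G$ regular, so the all-ones vector $\pmb{1}$ lies in $V_{\lambda_{\max}}$ and therefore $V_{\lambda}\perp\pmb{1}$ whenever $\lambda$ is not the biggest eigenvalue; in particular $\sum_{u\in[n]}\pmb{p}_{\lambda,u}=\pmb{0}$. Pairing this identity with $\pmb{p}_{\lambda,w}$, using the equitable identity $\langle\pmb{p}_{\lambda,C_m^v},\pmb{p}_{\lambda,w}\rangle=\|\pmb{p}_{\lambda,C_m^v}\|^{2}/|C_m^v|$ for the $C_m^v$-contribution, and substituting the first part of the Claim into the remaining $n-1-|C_m^v|$ terms yields
\[
(n-|C_m^v|)\,\langle\pmb{p}_{\lambda,v},\pmb{p}_{\lambda,w}\rangle+\frac{\|\pmb{p}_{\lambda,C_m^v}\|^{2}}{|C_m^v|}=0.
\]
Since $n-|C_m^v|>0$ and Case~2 rules out $\pmb{p}_{\lambda,C_m^v}=\pmb{0}$, the inner product must be strictly negative, completing the Claim.
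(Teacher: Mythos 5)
Your reduction steps are fine: under the standing assumption that case i) of Lemma \ref{Lem-SeparatingBigCell} fails, the decomposition $V_{\lambda}=V_{\lambda,\Pi_v}\oplus X_{\lambda,v,m-1}\oplus Z_{\lambda,v,m}$ is orthogonal, $\pmb{p}_{\lambda,v}\in V_{\lambda,\Pi_v}$, the pairing argument correctly gives $\dproj{Z_{\lambda,v,m}}(\pmb{e}_x)=\pmb{0}$ for $x\notin C_m^v$, and so (\ref{Property-B}) is indeed equivalent to the statement that the coordinates of $\pmb{p}_{\lambda,C_m^v}$ are constant on $[n]\setminus C_m^v$. But that equivalence is only a reformulation; the heart of the Claim is still the constancy, and the mechanism you propose for it does not deliver it. Applying (\ref{Equ-OrbitsApproximation}) with $C_i=C_m^v$, $C_j=\{v\}$ and the forced correspondences $\{v\}\leftrightarrow\{x\}$, $C_m^v\leftrightarrow C_m^x$ only yields the ``diagonal'' equality $\left(\pmb{p}_{\lambda,C_m^v}\right)_v=\left(\pmb{p}_{\lambda,C_m^x}\right)_x$, i.e.\ it compares a coordinate of $\pmb{p}_{\lambda,C_m^v}$ with a coordinate of a \emph{different} vector $\pmb{p}_{\lambda,C_m^x}$; what you need is $\left(\pmb{p}_{\lambda,C_m^v}\right)_x=\left(\pmb{p}_{\lambda,C_m^v}\right)_v$, and nothing in your sketch bridges the two (you cannot assume $C_m^x=C_m^v$ — that is essentially alternative ii) of the Lemma, which is not yet available). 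The vague appeal to ``the linear independence from Case 2 to pin down the relative scaling'' is not an argument, and in fact the distinct-span hypothesis plays no role in the paper's proof of (\ref{Property-B}). The paper closes exactly this gap by a counting/pigeonhole step: it sets $C^v_H=\{q:\langle\pmb{p}_{\lambda,v},\pmb{p}_{\lambda,q}\rangle=\langle\pmb{p}_{\lambda,v},\pmb{p}_{\lambda,w}\rangle\}\supseteq C_m^v$, so $|C^v_H|>n/2$; uniformity of $\bar{\Pi}[\oplus V_{\lambda}]$ (type equality of $\pmb{p}_{\lambda,v}$ and $\pmb{p}_{\lambda,x}$) produces a set $C^x_H$ of the same size on which $\pmb{p}_{\lambda,x}$ attains the \emph{same} value; since $|C_m^v|>n/2$, $C_m^v\cap C^x_H\neq\emptyset$, and then (\ref{Property-A}) propagates the equality to all of $C_m^v$. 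This intersection step is the missing idea in your proposal.

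For the ``moreover'' part your computation $0=\|\pmb{p}_{\lambda,C_m^v}\|^{2}/|C_m^v|+(n-|C_m^v|)\langle\pmb{p}_{\lambda,v},\pmb{p}_{\lambda,w}\rangle$ is the right completion of the paper's terse ``consequently'', but your claim that ``Case 2 rules out $\pmb{p}_{\lambda,C_m^v}=\pmb{0}$'' is false: $\drm{span}{\{\pmb{p}_{\lambda,v}\}}\neq\drm{span}{\{\pmb{p}_{\lambda,C_m^v}\}}$ is perfectly compatible with $\pmb{p}_{\lambda,C_m^v}=\pmb{0}$ (the paper's own subcase 2.2 treats precisely the situation $\langle\pmb{p}_{\lambda,v},\pmb{p}_{\lambda,w}\rangle=0$). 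So as written your argument only gives $\langle\pmb{p}_{\lambda,v},\pmb{p}_{\lambda,w}\rangle\le 0$, with strictness requiring the extra hypothesis $\pmb{p}_{\lambda,C_m^v}\neq\pmb{0}$ — a point on which the paper itself is also silent, but which you should not paper over by invoking Case 2.
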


Let $C^v_H$ be the subset of $[n]$ such that $\forall q \in C^v_H$ and $w \in C^v_m$,
$\left\langle \pmb{p}_{\lambda,v},\pmb{p}_{\lambda,q} \right\rangle = 
\left\langle \pmb{p}_{\lambda,v},\pmb{p}_{\lambda,w} \right\rangle$. Consequently, $C^v_m \subseteq C^v_H$ and thus $|C^v_H |>n/2$. In accordance with the condition that $\bar{\Pi}[ \oplus V_{\lambda} ]$ contains only one cell $[n]$, there is also a big subset $C_H^x$ associated with $\pmb{p}_{\lambda,x}$ of size $|C_H^v|$ such that 
$\forall q \in C^v_H$ and $r \in C^x_H$,
$$
\left\langle \pmb{p}_{\lambda,v},\pmb{p}_{\lambda,q} \right\rangle = 
\left\langle \pmb{p}_{\lambda,x},\pmb{p}_{\lambda,r} \right\rangle.
$$
Then $C_m^v$ must be contained in $C_H^x$ due to the requirement that $|C_m^v| > n/2$ and relation (\ref{Property-A}). Hence the equation (\ref{Property-B}) follows. 
Therefore, $\left\langle \pmb{p}_{\lambda,v},\pmb{p}_{\lambda,C_m^v} \right\rangle = 
\left\langle \pmb{p}_{\lambda,x},\pmb{p}_{\lambda,C_m^v} \right\rangle = 
\left\langle \pmb{p}_{\lambda,v},|C_m^v| \cdot \pmb{p}_{\lambda,w} \right\rangle$.

Since $\bar{\Pi}[ \oplus V_{\lambda} ]$ is an equitable partition and contains only one cell, if $\lambda$ is not the biggest eigenvalue of $\dAM{G}$ then 
$$\sum_{i=1}^m \dproj{V_{\lambda}}( \pmb{R}_{C_i^v} ) = \dproj{V_{\lambda}}( \pmb{1} ) = \pmb{0},
$$
where $\pmb{1} = (1,1,\ldots,1)$. Consequently, $\left\langle \pmb{p}_{\lambda,v},\pmb{p}_{\lambda,w}  \right\rangle < 0$.

\vspace{3mm}
\noindent{\bf Case 2.1.} $V_{\lambda}$ is an eigenspace of $\dAM{G}$ such that $\pmb{p}_{\lambda,x} = \pmb{p}_{\lambda,v}$ for any $x \in [n]\setminus C_m^v$. 

Since $\bar{\Pi}[ \oplus V_{\lambda} ]$ possesses only one cell and $C_m^v$ is a cell of $\Pi[\oplus V_{\lambda} ; v]$, there exists a cell $C^x$ of $\Pi[ V_{\lambda} ; x ]$ so that $C_m^v \subseteq C^x$.

\vspace{2mm}
\noindent{\bf Case 2.2.} $V_{\lambda}$ is an eigenspace of $\dAM{G}$ such that $\pmb{p}_{\lambda,y} \neq \pmb{p}_{\lambda,v}$ for some $y \in [n] \setminus \big( \{v\} \cup C_m^v \big)$, and $\langle \pmb{p}_{\lambda,v},\pmb{p}_{\lambda,w} \rangle = 0$ for any $w \in C_m^v$.

According to the relation (\ref{Property-B}), $C_m^v$ is contained in the thin cell of $\Pi[ V_{\lambda} ; x ]$ for any $x\in [n] \setminus C_m^v$.

\vspace{2mm}
\noindent{\bf Case 2.3.} $V_{\lambda}$ is an eigenspace of $\dAM{G}$ such that $\pmb{p}_{\lambda,y} \neq \pmb{p}_{\lambda,v}$ for some $y \in [n] \setminus \big( \{v\} \cup C_m^v \big)$, and $\langle \pmb{p}_{\lambda,v},\pmb{p}_{\lambda,w} \rangle \neq 0$ for any $w \in C_m^v$.
 
Obviously $\dproj{V_{\lambda}}( \pmb{1} )$ must be $\pmb{0}$ in this case. Let $x$ be a vertex in $[n]\setminus C_m^v$. Set 
$$
A^{\lambda}_x = \big\{ u \in [n] : \langle \pmb{p}_{\lambda,u},\pmb{p}_{\lambda,x} \rangle \neq 0 \big\}
\mbox{ and }
V_{\lambda,A^{\lambda}_x} = \drm{span}{\left\{ \pmb{p}_{\lambda,u} : u \in A^{\lambda}_x \right\}}.
$$ 
Apparently $C^v_m \subseteq A^{\lambda}_x$ and thus the vector $\dproj{ V_{\lambda} }( \pmb{R}_{C_m^v} )$, which is equal to $\sum_{w\in C^v_m} \pmb{p}_{\lambda,w}$, belongs to $V_{\lambda,A^{\lambda}_x}$. As a result, $V_{\lambda,A^{\lambda}_x} \supsetneq \drm{span}{\{ \pmb{p}_{\lambda,r} : r \in A^{\lambda}_x \setminus C_m^v \}}$.

In fact, if it is not the case, {\it i.e.,} $V_{\lambda,A^{\lambda}_x} = \drm{span}{\{ \pmb{p}_{\lambda,r} : r \in A^{\lambda}_x \setminus C_m^v \}}$, then $V_{\lambda,A^{\lambda}_x} \subseteq V_{\lambda,\Pi_v} \oplus X_{\lambda,v,m-1}$, for the latter subspace is spanned by vectors $\{ \pmb{p}_{\lambda,y} : y \in [n]\setminus C^v_m \}$. 

On the other hand, since $\bar{\Pi}[ \oplus V_{\lambda} ]$ possesses only one cell, any two members $\pmb{p}_{\lambda,s}$ and $\pmb{p}_{\lambda,t}$ ($s,t\in [n]$) of the OPSB onto $V_{\lambda}$ would be in the same type, {\it i.e.,} $\{ \pmb{p}_{\lambda,s} \} = \{ \pmb{p}_{\lambda,t} \}$. Thus $\|\pmb{p}_{\lambda,s}\| = \|\pmb{p}_{\lambda,t}\|$. Note that for any $w\in C_m^v$, $\dproj{ V_{\lambda,\Pi_v} \oplus X_{\lambda,v,m-1} }( \pmb{e}_{w} ) = \frac{1}{|C_m^v|} \cdot \dproj{ V_{\lambda} }( \pmb{R}_{C_m^v} )$ and $|C_m^v| > n/2$, so $\dproj{V_{\lambda,A^{\lambda}_x}}( \pmb{1} ) \neq \pmb{0}$, which is a contradiction. Therefore, $\drm{span}{\{ \pmb{p}_{\lambda,r} : r \in A^{\lambda}_x \setminus C_m^v \}} \subsetneq V_{\lambda,A^{\lambda}_x}$.

Suppose $R_x$ is the first region of $V_{\lambda,A^{\lambda}_x}$ obtained in outputting $\Pi[  V_{\lambda} ; x ]$, which contains $\pmb{p}_{\lambda,x}$ and is carved up by dividers $\{ \dproj{ V_{\lambda,A^{\lambda}_x} }( \pmb{e}_u )^{\perp} : u \in A^{\lambda}_x \}$. One can readily check that $\duspan{V_{\lambda,A^{\lambda}_x}}{\mathcal{I}_{R_x}} = V_{\lambda,A^{\lambda}_x}$ where $\mathcal{I}_{R_x}$ is the incidence set of $R_x$. Consequently, $\mathcal{I}_{R_x} \cap C_m^v \neq \emptyset$. Clearly $C_m^v \subseteq \mathcal{I}_{R_v}$, since $C_m^v \in \Pi[ \oplus V_{\lambda} ; v ]$. As we have seen, for any $w\in C_m^v$ and $x\in [n]\setminus C_m^v$, 
$$
\left\langle \pmb{p}_{\lambda,v},\pmb{p}_{\lambda,w}  \right\rangle = \left\langle \pmb{p}_{\lambda,x},\pmb{p}_{\lambda,w} \right\rangle \mbox{ and }
\dproj{ V_{\lambda,\Pi_v} \oplus X_{\lambda,v,m-1} }( \pmb{e}_{w} ) = \frac{1}{|C_m^v|} \cdot \dproj{ V_{\lambda} }( \pmb{R}_{C_m^v} ),
$$ 
so $C_m^v \subseteq \mathcal{I}_{R_x}$. 

\vspace{3mm}
It is routine to check that those three operations used in building $\Pi[ \oplus V_{\lambda} ; x ]$ cannot split the subset $C_m^v$, so $C_m^v$ is a cell of $\Pi[ \oplus V_{\lambda} ; x ]$. As a result, for any $x \in [n]\setminus C_m^v$, $\Pi[ \oplus V_{\lambda} ; x ]$ contains $C_m^v$ as a cell. \end{proof}

As one can readily see, Lemma \ref{Lem-SeparatingBigCell} holds in more general case. Let $\bar{\Pi}$ be an uniform partition of $[n]$ and $S$ some non-singleton cell of $\bar{\Pi}$ we need to split. Since $\bar{\Pi}$ is an equitable partition, the eigenspace $V_{\lambda}$ can be decomposed as $\mathbf{R}_{\bar{\Pi}} V_{\lambda}^{G/\bar{\Pi}} \oplus Y_{\lambda,\bar{\Pi}}$, where those two subspaces involved are orthogonal to one another. Set $X_{\lambda,\bar{\Pi},S} = \duspan{ Y_{\lambda,\bar{\Pi}} }{S}$. Let $x$ be a vertex in $S$ and let $\Pi[ \oplus X_{\lambda,\bar{\Pi},S} ; x ]$ be the balanced partition obtained by those three operations that are used to output $\Pi[ \oplus V_{\lambda} ; v ]$ but now carried out on $\oplus_{\lambda \in\drm{spec}{\dAM{G}}} \hspace{0.5mm} X_{\lambda,\bar{\Pi},S}$. 

Let $\Pi_x$ denote the partition $\Pi[ \oplus X_{\lambda,\bar{\Pi},S} ; x ]$ and $\ReSt{\Pi_x}{S}$ the family of subsets consisting of those cells of $\Pi_x$ each of which is contained in $S$. Suppose $C_1^x=\{x\},C_2^x,\ldots,C_m^x$ are cells of $\ReSt{\Pi_x}{S}$ such that 
$|C_2^x| \leq \cdots \leq |C_m^x|$ and $m\geq 3$. Because $\Pi_x$ is an equitable partition,  
$$
X_{\lambda,\bar{\Pi},S} = 
\left( X_{\lambda,\bar{\Pi},S} \cap \mathbf{R}_{\Pi_x} V_{\lambda}^{G/\Pi_x} \right)
\oplus Y_{\lambda,x},
$$ 
where $Y_{\lambda,x}$ is the orthogonal complement of the first subspace in $X_{\lambda,\bar{\Pi},S}$. Again we use $X_{\lambda,x,m-1}$ to denote the subspace $\drm{span}{\{ Y_{\lambda,x} : \cup_{i=2}^{m-1} C_i^x \}}$, where $\lambda \in \drm{spec}{\dAM{G}}$.

\begin{Lemma}\label{Lem-SeparatingBigCell-GeneralForm} 
If $|C_m^x| > |S|/2$ then one of following two cases occurs. 
\begin{enumerate}

\item[i)] The subspace $\duspan{\oplus_{\lambda \hspace{0.5mm} \in \hspace{0.5mm} \drm{spce}{\dAM{G}}}X_{\lambda,x,m-1}}{C_m^x}$ is non-trivial.

\item[ii)] For any vertex $y$ of $S \setminus C_m^x$, $C_m^y = C_m^x$ where $C_m^y$ denotes the biggest cell of $\ReSt{\Pi_y}{S}$.

\end{enumerate}
\end{Lemma}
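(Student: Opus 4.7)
The plan is to transplant the proof of Lemma \ref{Lem-SeparatingBigCell} almost verbatim, replacing the universal ambient $[n]$ by the cell $S$ of $\bar{\Pi}$ and each eigenspace $V_{\lambda}$ by the subspace $X_{\lambda,\bar{\Pi},S}=\duspan{Y_{\lambda,\bar{\Pi}}}{S}$. Because the balanced partition $\Pi_x$ is equitable, we will have the orthogonal decomposition
$$
X_{\lambda,\bar{\Pi},S}=\big(X_{\lambda,\bar{\Pi},S}\cap \mathbf{R}_{\Pi_x}V_{\lambda}^{G/\Pi_x}\big)\oplus X_{\lambda,x,m-1}\oplus Z_{\lambda,x,m},
$$
where $Z_{\lambda,x,m}:=\duspan{Y_{\lambda,x}}{C_m^x}$, so these three summands play the role that $V_{\lambda,\Pi_v}$, $X_{\lambda,v,m-1}$, $Z_{\lambda,v,m}$ play in Lemma \ref{Lem-SeparatingBigCell}. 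The hypothesis that $\bar{\Pi}$ is \emph{uniform} on $S$ plays the role the assumption $\bar{\Pi}[\oplus V_{\lambda}]=\{[n]\}$ plays in the original: for any two members $y,y'\in S$ the types of $\dproj{X_{\lambda,\bar{\Pi},S}}(\pmb{e}_y)$ and $\dproj{X_{\lambda,\bar{\Pi},S}}(\pmb{e}_{y'})$ coincide, and the corresponding inner products match up via the natural relabelling.

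Assume i) fails, so $X_{\lambda,x,m-1}\perp Z_{\lambda,x,m}$ for every $\lambda$. Writing $\pmb{q}_{\lambda,u}:=\dproj{X_{\lambda,\bar{\Pi},S}}(\pmb{e}_u)$, the argument leading to (\ref{Relation-A}) gives, for each $w\in C_m^x$,
$$
\dproj{X_{\lambda,\bar{\Pi},S}\ominus Z_{\lambda,x,m}}(\pmb{e}_w)=\frac{1}{|C_m^x|}\,\dproj{X_{\lambda,\bar{\Pi},S}}(\pmb{R}_{C_m^x}),
$$
which furnishes the analogue of property (\ref{Property-A}): $\langle\pmb{q}_{\lambda,y},\pmb{q}_{\lambda,w'}\rangle=\langle\pmb{q}_{\lambda,y},\pmb{q}_{\lambda,w''}\rangle$ for every $y\in S\setminus C_m^x$ and $w',w''\in C_m^x$. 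I would then branch on whether $\drm{span}{\{\pmb{q}_{\lambda,x}\}}=\drm{span}{\{\pmb{q}_{\lambda,C_m^x}\}}$, reproducing Cases \ref{Case-BB} and \ref{Case-AA}. The first branch is excluded by uniformity of $\bar{\Pi}$ on $S$ together with $|C_m^x|>|S|/2$, forcing alternative i). In the second branch, one obtains the analogue of (\ref{Property-B}) by locating, for each $y\in S\setminus C_m^x$, a subset $C_H^y\supseteq C_m^x$ of $S$ matching a ``big'' $C_H^x$ containing $C_m^x$.

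Finally I would replay sub-cases 2.1--2.3 to verify that in each none of the three operations used to build $\Pi[\oplus_{\lambda}X_{\lambda,\bar{\Pi},S};y]$ can refine $C_m^x$, so $C_m^x\in\ReSt{\Pi_y}{S}$ and, by the size bound, is the largest such cell. I expect sub-case 2.3 to be the main obstacle, as in Lemma \ref{Lem-SeparatingBigCell}: when $\langle\pmb{q}_{\lambda,x},\pmb{q}_{\lambda,w}\rangle\neq 0$ and $\pmb{q}_{\lambda,y}\neq\pmb{q}_{\lambda,x}$ for some $y\in S\setminus(\{x\}\cup C_m^x)$, one needs to show that the first region $R_y$ of $V_{\lambda,A^{\lambda}_y}:=\drm{span}{\{\pmb{q}_{\lambda,u}:u\in A^{\lambda}_y\}}$ containing $\pmb{q}_{\lambda,y}$ satisfies $C_m^x\subseteq\mathcal{I}_{R_y}$. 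The size bound $|C_m^x|>|S|/2$ combined with the displayed projection identity rules out $V_{\lambda,A^{\lambda}_y}=\drm{span}{\{\pmb{q}_{\lambda,r}:r\in A^{\lambda}_y\setminus C_m^x\}}$, after which the remaining type and inner-product refinements leave $C_m^x$ intact.
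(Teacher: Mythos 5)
Your proposal matches the paper's intended route: the paper itself disposes of Lemma \ref{Lem-SeparatingBigCell-GeneralForm} with the single remark that it is proved by the argument of Lemma \ref{Lem-SeparatingBigCell}, and your transplantation carries out exactly that adaptation, with the correct substitutions (ambient set $[n]$ replaced by the cell $S$, each $V_{\lambda}$ replaced by $X_{\lambda,\bar{\Pi},S}$, the hypothesis $\bar{\Pi}[\oplus V_{\lambda}]=\{[n]\}$ replaced by uniformity of $\bar{\Pi}$ on $S$, and the decomposition $V_{\lambda,\Pi_v}\oplus X_{\lambda,v,m-1}\oplus Z_{\lambda,v,m}$ replaced by its analogue inside $X_{\lambda,\bar{\Pi},S}$), reproducing the two cases and sub-cases 2.1--2.3. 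So the proof is correct in the same sense and to the same degree of rigor as the paper's own argument for Lemma \ref{Lem-SeparatingBigCell}.
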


It is routine to verify that the assertion above can be proved by the argument used in proving Lemma \ref{Lem-SeparatingBigCell}.

\vspace{1.5mm}
Since $\Pi[ \oplus V_{\lambda} ; x ]$ is an equitable partition, each eigenspace $V_{\lambda}$ can be decomposed into $V_{\lambda,\Pi_x} \oplus Y_{\lambda,x}$, where $V_{\lambda,\Pi_x}$ stands for the subspace $\mathbf{R}_{\Pi[\oplus V_{\lambda} ; x] } V_{\lambda}^{G/\Pi[\oplus V_{\lambda} ; x]}$ and $Y_{\lambda,x}$ is the orthogonal complement of $V_{\lambda,\Pi_x}$ in $V_{\lambda}$. Our aim here is to assemble in the case that $\mathtt{B} = [n]$ those subspaces we have singled out for revealing symmetries represented in $Y_{\lambda,x}$. Recall that $C^x_1 = \{ x \},C^x_2,\ldots,C^x_m$ are cells of $\Pi[ \oplus V_{\lambda} ; x ]$ such that $m\geq 3$ and $|C^x_2| \leq \cdots \leq |C^x_m|$. Clearly, there are two cases:
$$
|C^x_m| \leq n/2 ~ \mbox{ or } ~ |C^x_m| > n/2.
$$
We first consider the 2nd case and then use the machinery developed for that case to deal with the 1st one. 

There are due to Lemma \ref{Lem-SeparatingBigCell} two possibilities.

\begin{itemize}

\item[1)] $\forall\hspace{0.6mm} y \in [n]\setminus C^x_m$, $C^y_m = C^x_m$.

First of all, we use the relation above to define a binary relation among vertices of $G$: two vertices $u$ and $v$ are said to be related if $C_m^u = C_m^v$. Evidently, it is an equivalence relation, so there is a partition $P_B$ of $[n]$ associated with the relation. Let $E_1^B,\ldots,E_q^B$ be cells of $P_B$. Clearly if $u_i \in E_i^B$ then $E_i^B = \cup_{j=1}^{m-1} C_j^{u_i}$, and thus $|E_i^B| = n - |C_m^{u_i}|$ ($i=1,\ldots,q$), which is less than $n/2$.

According to Lemma \ref{Lem-SeparatingBigCell}, each eigenspace $V_{\lambda}$ has an orthogonal decomposition $V_{\lambda,\Pi_x} \oplus X_{\lambda,x,m-1} \oplus Z_{\lambda,x,m}$, where $X_{\lambda,x,m-1} = \drm{span}{\{ Y_{\lambda,x} : \cup_{i=2}^{m-1} C_i^x \}}$ and $Z_{\lambda,x,m} = \drm{span}{\{ Y_{\lambda,x} : C_m^x \}}$. Consequently, $\dproj{V_{\lambda}}(\pmb{R}_{C_m^{u_i}}) = \dproj{V_{\lambda,i}}(\pmb{R}_{C_m^{u_i}})$, where $u_i \in E_i^B$ and $V_{\lambda,i} = \duspan{ V_{\lambda} }{ E_i^B }$. In accordance with relations (\ref{Relation-A}) and (\ref{Property-B}), coordinates of the vector $\dproj{V_{\lambda}}(\pmb{R}_{C_m^{u_i}})$ only take two values:
$$
\left\langle \dproj{V_{\lambda}}(\pmb{R}_{C_m^{u_i}}),\dproj{V_{\lambda}}(\pmb{e}_{w}) \right\rangle \mbox{ if } w \in C_m^{u_i},
$$
or
$$
\left\langle \dproj{V_{\lambda}}(\pmb{R}_{C_m^{u_i}}),\dproj{V_{\lambda}}(\pmb{e}_{z}) \right\rangle \mbox{ if } z \in [n] \setminus C_m^{u_i}.
$$ 
As a result, the subspace $\drm{span}{\{ \dproj{V_{\lambda}}(\pmb{R}_{C_m^{u_i}}) : i \in [q] \}}$ is of dimension $q-1$, and therefore the group $\mathrm{Aut}\hspace{0.5mm} \drm{span}{\{ \dproj{V_{\lambda}}(\pmb{R}_{C_m^{u_i}}) : i \in [q] \}}$ is isomorphic to the product group $\Pi_{k=1}^{q} \drm{Sym}{\big[ \hspace{0.4mm} |E_1^B| \hspace{0.4mm} \big]}$, where $\big[ \hspace{0.4mm} |E_1^B| \hspace{0.4mm} \big] = \left\{ 1,2,\ldots,|E_1^B|\right\}$.

Set $Y_{\lambda,i} = V_{\lambda,i} \ominus \drm{span}{\{ \dproj{V_{\lambda}}(\pmb{R}_{C_m^{u_i}}) \}}$, {\it i.e.,} $Y_{\lambda,i}$ is the orthogonal complement of the subspace spanned by $\dproj{V_{\lambda}}(\pmb{R}_{C_m^{u_i}})$ in $V_{\lambda,i}$. Because $E_j^B \subseteq C_m^{u_i}$ if $i\neq j$, $Y_{\lambda,i} \perp Y_{\lambda,j}$, so the eigenspace $V_{\lambda}$ can be decomposed as follows:
$$
\left( \oplus_{i=1}^q Y_{\lambda,i} \right) \oplus
 \drm{span}{\{ \dproj{V_{\lambda}}(\pmb{R}_{C_m^{u_i}}) : i = 1,\ldots,q \}}.
$$
Accordingly, in order to determine whether or not $\dproj{V_{\lambda}}(\pmb{e}_{u_i})$ and $\dproj{V_{\lambda}}(\pmb{e}_{v_i})$ are symmetric in $V_{\lambda}$, where $u_i$ and $v_i$ belong to $E_i^B$, we only need to determine whether or not there is a permutation $\gamma$ of $E_i^B$ so that $\gamma \hspace{0.4mm} Y_{\lambda,i} = Y_{\lambda,i}$ and $\gamma \hspace{0.4mm} \dproj{V_{\lambda}}(\pmb{e}_{u_i}) = \dproj{V_{\lambda}}(\pmb{e}_{v_i})$, while in order to determine whether or not $\dproj{V_{\lambda}}(\pmb{e}_{u_i})$ and $\dproj{V_{\lambda}}(\pmb{e}_{u_j})$ ($i \neq j$) are symmetric in $V_{\lambda}$, where $u_i\in E_i^B$ and $u_j\in E_j^B$, we only need to determine whether there is a permutation $\gamma$ of $E_i^B \cup E_j^B$ so that $\gamma \hspace{0.4mm} V_{\lambda} = V_{\lambda}$, $\gamma \hspace{0.4mm} Y_{\lambda,i} = Y_{\lambda,j}$ and $\gamma \hspace{0.4mm} \dproj{V_{\lambda}}(\pmb{e}_{u_i}) = \dproj{V_{\lambda}}(\pmb{e}_{u_j})$. 

As a result, we need only to focus on relations among members in $E_i^B$ ($i=1,\ldots,q$) in order to work out the information about the group $\drm{Aut}{\oplus_{\lambda} Y_{\lambda,i}}$, {\it i.e.,} the information about the partition of $E_i^B$ consisting orbits of $\drm{Aut}{\oplus_{\lambda} Y_{\lambda,i}}$ action on $E_i^B$ and a series of partitions of $E_i^B$ associated with a fastening sequence of the group.\footnote{ Note that $\oplus_{\lambda} Y_{\lambda,i} \perp \oplus_{\lambda} Y_{\lambda,j}$ if $i \neq j$, {\it i.e.,} $\duspan{\oplus_{\lambda} Y_{\lambda,i}}{E_j^B} = \pmb{0}$, so the information about $\drm{Aut}{\oplus_{\lambda} Y_{\lambda,i}}$ could be fully described with partitions of $E_i^B$. } Moreover, after having obtained those partitions of $E_i^B$ relevant to $\drm{Aut}{\oplus_{\lambda} Y_{\lambda,i}}$ and its stabilizers, one can by running the algorithm on $\oplus_{\lambda} Y_{\lambda,j}$ ($j\neq i$) easily determine the corresponding relations between cells of those partitions of $E_i^B$ and of $E_j^B$. Finally we can obtain in a reductive way the information about $\drm{Aut}{\oplus V_{\lambda}}$.

\item[2)] $\duspan{\oplus_{\lambda} \hspace{0.5mm}X_{\lambda,x,m-1}}{C^x_m}\neq\pmb{0}$.

Recall that $X_{\lambda,x,m-1} = \duspan{Y_{\lambda,x}}{\cup_{i=2}^{m-1} C_i^x}$, $\lambda\in\drm{spec}{\dAM{G}}$, so we can single out one more subspace $Z_{\lambda,x,m}$ of $Y_{\lambda,x}$ which is the orthogonal complement of $X_{\lambda,x,m-1}$. Consequently we have for each eigenspace an orthogonal decomposition 
$$ 
V_{\lambda} = V_{\lambda,\Pi_x} \oplus
X_{\lambda,x,m-1} \oplus Z_{\lambda,x,m}.
$$
Accordingly $\dproj{ X_{\lambda,x,m-1} }( \pmb{R}_{C_m^x} ) = \pmb{0}$, $\forall\hspace{0.5mm} \lambda \in \drm{spec}{\dAM{G}}$. Because $\drm{span}{\{ \oplus_{\lambda} \hspace{0.5mm} X_{\lambda,x,m-1} : C_m^x \}} \neq \pmb{0}$, there exist a group of vectors $\pmb{s}_1,\ldots,\pmb{s}_q$ in $\oplus_{\lambda} \hspace{0.5mm} X_{\lambda,x,m-1}$ such that $q\geq 2$ and $\forall \hspace{0.5mm} i \in [q]$, $\exists \hspace{0.5mm} w \in  C_m^x$ {\it s.t.,} $\dproj{\oplus_{\lambda}  X_{\lambda,x,m-1}}(\pmb{e}_w) = \pmb{s}_i$.  

Set $\drm{proj}{^{-1}} \pmb{s}_i = \{ w \in C_m^x : \dproj{\oplus_{\lambda} X_{\lambda,x,m-1}}(\pmb{e}_w) = \pmb{s}_i \}$. Since each subspace $X_{\lambda,x,m-1}$ is $\mathfrak{G}_x$-invariant, if any one of three cases below occurs then $C_m^x$ cannot be an orbit of $\mathfrak{G}_x$:
\begin{enumerate}

\item $\cup_{i=1}^q \hspace{0.5mm} \drm{proj}{^{-1}} \pmb{s}_i \subsetneq C^x_m$;

\item $\exists \hspace{0.5mm} i,j \in [q]$ {\it s.t.,} $|\drm{proj}{^{-1}} \pmb{s}_i| \neq |\drm{proj}{^{-1}} \pmb{s}_j|$;

\item $\exists \hspace{0.5mm} i,j \in [q]$ {\it s.t.,} $\pmb{s}_i$ and $\pmb{s}_j$ do not belong to the same orbit of $\drm{Aut}{\oplus_{\lambda} X_{\lambda,x,m-1}}$.

\end{enumerate}
Hence, if $C_m^x$ is an orbit of $\mathfrak{G}_x$, it must be split into at least two equal parts by grouping its members according to projections, {\it i.e.,} $\drm{proj}{^{-1}} \pmb{s}_1,\ldots,\drm{proj}{^{-1}} \pmb{s}_q$, so the order of each part is less than $|C_m^x|/2<n/2$. In what follows, we assume none of three cases listed above occurs. Again, there are two possibilities. 
 
 \begin{itemize}
 
 \item[2.1)] $\forall\hspace{0.6mm} w',w'' \in C^x_m$, $\dproj{\oplus_{\lambda} X_{\lambda,x,m-1}}(\pmb{e}_{w'}) \neq \dproj{\oplus_{\lambda}  X_{\lambda,x,m-1}}(\pmb{e}_{w''})$.
 
We begin with defining a digraph $\mathrm{DPBG}(\mathtt{B}) = \left( \mathtt{B},\left\{ \mathtt{B} \setminus (C_m^y\cup\{y\}) : y \in \mathtt{B} \right\}\right)$ that is a denser version of the digraph $\mathrm{PBG}(\mathtt{B})$ which we constructed as the 2nd test for $\mathtt{B}$ being a minimal block for $\mathfrak{G}$ or not. More precisely $\mathtt{B}$ is the vertex set of $\mathrm{DPBG}(\mathtt{B})$ and there is an arc from $u'$ to $u''$, {\it i.e.,} $u' \rightarrow u''$, if $u'' \in \mathtt{B} \setminus \big( C_m^{u'} \cup \{u'\} \big)$. Obviously, if we construct $\mathrm{PBG}(\mathtt{B})$ by virtue of a family of small cells $\{ \phi_{xy} C_i^x : y \in \mathtt{B} \}$  such that $|C_i^x| < |C_m^x|$ then $\mathrm{PBG}(\mathtt{B})$ is contained in $\mathrm{DPBG}(\mathtt{B})$ as a subgraph, where $\phi_{xy}$ stands for the corresponding relation between cells of two partitions $\Pi[ \oplus V_{\lambda};x ]$ and $\Pi[ \oplus V_{\lambda};y ]$ induced by the procedure of outputting those two partitions. Because $\mathrm{PBG}(\mathtt{B})$ is strong connected, so is $\mathrm{DPBG}(\mathtt{B})$.

Let $u$ be a member of $\mathtt{B}$ and set $N_1^+(u) = \mathtt{B} \setminus (C_m^u\cup\{u\})$, which is the set of out-neighbors of the vertex $u$ in $\mathrm{DPBG}(\mathtt{B})$. Clearly $|N_1^+(u)| < |\mathtt{B}|/2 \leq n/2$. Moreover, we can define the set of out-neighbors of $u$ at the $k$-th level in an inductive way:
$$
N_k^+(u) = \{ t \in \mathtt{B} \setminus \left( \cup_{i=1}^{k-1} N_i^+(u) \right) : \exists \hspace{0.6mm} s \in N_{k-1}^+(u) ~ s.t., ~ t \in N_1^+(s) \}, ~~ k = 2,\ldots,d,
$$
where $d$ denotes the longest distance from $x$ to other vertices in $\mathrm{DPBG}(\mathtt{B})$. 

Now let us see how to determine $\Pi_x^*$, the partition of $[n]$ composed of the orbits of $\mathfrak{G}_x$, by virtue of the distance between $x$ and the rest of vertices. Since each subspace $X_{\lambda,x,m-1}$ ($\lambda\in\drm{spec}{\dAM{G}}$) is spanned by $\{ \dproj{X_{\lambda,x,m-1}}(\pmb{e}_u) : u \in N_1^+(x) \}$, the dimension of $X_{\lambda,x,m-1}$ is less than $|\mathtt{B}|/2 \leq n/2$, so we can determine in a reductive way the orbits of $\drm{Aut}{\oplus_{\lambda} X_{\lambda,x,m-1}}$ and a series of partitions of $[n]$ associated with a fastening sequence of the group.

Let $t$ be an out-neighbor of $x$ and set $Z_{\lambda,x,t}^{(1)} = \duspan{Z_{\lambda,x,m}}{N_1^+(t)}$. We determine the partition of $[n]$ composed of the orbits of $\drm{Aut}{\oplus_{\lambda} Z_{\lambda,x,t}^{(1)}}$ and a series of partitions of $[n]$ associated with a fastening sequence of the group. Next we conduct a test for consistency of actions of $\drm{Aut}{\oplus_{\lambda} Z_{\lambda,x,t}^{(1)}}$ and of $\big(\drm{Aut}{\oplus_{\lambda} X_{\lambda,x,m-1}}\big)_t$ for every $t$ in $N_1^+(x)$. 

To be precise, we need to determine the partition of $[n]$, which is composed of the orbits of the group $\big(\drm{Aut}{\oplus_{\lambda} Z_{\lambda,x,t}^{(1)}}\big)\cap\big(\drm{Aut}{\oplus_{\lambda} X_{\lambda,x,m-1}}\big)_t$, and a series of partitions of $[n]$ associated with a fastening sequence of the group. Note that each member of $N_1^+(t)$ has a representative in $\oplus_{\lambda} X_{\lambda,x,m-1}$, so this could be done efficiently. The group resulted is denoted by $\drm{Aut}{\oplus_{\lambda,t} \left( X_{\lambda,x,m-1}} \oplus Z_{\lambda,x,t}^{(1)} \right)$.

Let $r$ be a vertex in $N_2^+(x)$ and let $Z_{\lambda,x,r}^{(2)}$ denote the orthogonal complement of the subspace $\duspan{Z_{\lambda,x,m}}{N_1^+(r)} \cap \big( \oplus_{t \in N_1^+(x)} Z_{\lambda,x,t}^{(1)} \big)$ in $\duspan{Z_{\lambda,x,m}}{N_1^+(r)}$. We determine the partition of $[n]$ consisting of the orbits of $\drm{Aut}{\oplus_{\lambda} Z_{\lambda,x,r}^{(2)}}$ and a series of partitions of $[n]$ associated with a fastening sequence of the group. Again we need to conduct a test for consistency of actions of $\drm{Aut}{\oplus_{\lambda} Z_{\lambda,x,r}^{(2)}}$ and of $\big(\drm{Aut}{\oplus_{\lambda} X_{\lambda,x,m-1}}\big)_{t,r}$ for every $r$ in $N_2^+(x)$. The group resulted is denoted by $\drm{Aut}{\oplus_{\lambda,t,r} \left( X_{\lambda,x,m-1}} \oplus Z_{\lambda,x,t}^{(1)} \oplus Z_{\lambda,x,r}^{(2)} \right)$.

One can readily see that by repeating the process above for each $u \in N_k^+(x)$ ($k=2,3,\ldots,d$), we can finally obtain the information about $\mathfrak{G}_x$.

 \item[2.2)] $\exists\hspace{0.6mm} \pmb{s}_1,\ldots,\pmb{s}_q \in \oplus_{\lambda} X_{\lambda,x,m-1}$ such that $2\leq q \leq |C_m^x|/2$ and $\forall\hspace{0.6mm} w \in C^x_m$, $\exists\hspace{0.6mm} i \in [q]$, 
 $$\dproj{\oplus_{\lambda} X_{\lambda,x,m-1}}(\pmb{e}_{w}) = \pmb{s}_i.$$
 
Clearly, we need the information about $\drm{Aut}{\oplus_{\lambda} X_{\lambda,x,m-1}}$ in dealing with the subspace $\oplus_{\lambda} Z_{\lambda,x,m}$, so we first determine that reductively. Furthermore, one can readily see that if $C^x_m$ is one of orbits of $\mathfrak{G}_x$ then those subsets $\drm{proj}{^{-1}} \pmb{s}_1,\ldots,\drm{proj}{^{-1}} \pmb{s}_q$ comprise a block system of $\mathfrak{G}_x$. Accordingly, in order to obtain the information about $\drm{Aut}{\oplus_{\lambda} Z_{\lambda,x,m}}$, we need to work out the information about $\drm{Aut}{\oplus_{\lambda} Z_{\lambda,x,\pmb{s}_i}}$, where $Z_{\lambda,x,\pmb{s}_i} = \duspan{Z_{\lambda,x,m}}{ \drm{proj}{^{-1}} \pmb{s}_i }$ and $i=1,\ldots,q$.

On the other hand, if the action of $\drm{Aut}{\oplus_{\lambda} X_{\lambda,x,m-1}}$ on $\pmb{s}_1,\ldots,\pmb{s}_q$ is not the same as the action of $\drm{Sym}{[q]}$ on $[q]$, then there are at least 3 orbits of $\big(\drm{Aut}{\oplus_{\lambda} X_{\lambda,x,m-1}}\big)_{\pmb{s}_i}$, $\forall i\in [q]$, so for some eigenvalue $\lambda$ we can split the subspace $Z_{\lambda,x,m}$ into smaller subspaces in a way like what we did on $V_{\lambda}$ with the cells $S_1,\ldots,S_t$ of $\bar{\Pi}[ \oplus V_{\lambda} ]$. Hence, we assume in what follows that the action of $\drm{Aut}{\oplus_{\lambda} X_{\lambda,x,m-1}}$ on $\{ \pmb{s}_1,\ldots,\pmb{s}_q \}$ is transitive and for each $i \in [q]$ $\big(\drm{Aut}{\oplus_{\lambda} X_{\lambda,x,m-1}}\big)_{\pmb{s}_i}$ possesses only two orbits $\{ \pmb{s}_i \}$ and $\{ \pmb{s}_1,\ldots,\pmb{s}_q \}\setminus\{ \pmb{s}_i \}$. As a result there are only two cases. 

  \begin{itemize}
  
   \item[2.2A)] $\forall \hspace{0.6mm} i,j \in [q]$, if $i\neq j$ then $Z_{\lambda,x,\pmb{s}_i} \perp Z_{\lambda,x,\pmb{s}_j}$. 
   
In this case, each subspace $Z_{\lambda,x,m}$ ($\lambda\in\drm{spec}{\dAM{G}}$) could be decomposed as an orthogonally direct sum $\oplus_{k = 1}^q Z_{\lambda,x,\pmb{s}_k}$, so we can employ the machinery developed for dealing with the case 1) to work out the information about the group $\drm{Aut}{\oplus_{\lambda} Z_{\lambda,x,m}}$, {\it i.e.,} the information about the partition of $C_m^x$ consisting of orbits of $\drm{Aut}{\oplus_{\lambda} Z_{\lambda,x,m}}$ and a series of partitions of $C_m^x$ associated with a fastening sequence of the group.

   \item[2.2B)] $\forall \hspace{0.6mm} i,j \in [q]$, $Z_{\lambda,x,\pmb{s}_i}$ and $Z_{\lambda,x,\pmb{s}_j}$ are not orthogonal to one another.
   
Since we cannot split $Z_{\lambda,x,m}$ by the partition of $C_m^x$ consisting of orbits of the group $\big(\drm{Aut}{\oplus_{\lambda} X_{\lambda,x,m-1}}\big)_{\pmb{s}_i}$, we have to explore those subspaces $Z_{\lambda,x,\pmb{s}_1},\cdots,Z_{\lambda,x,\pmb{s}_q}$ one by one in order to determine the structure of $\drm{Aut}{\oplus_{\lambda} Z_{\lambda,x,m}}$.

Let us pick arbitrarily one subset $\drm{proj}{^{-1}} \pmb{s}_{k_1}$ from the family $\{ \drm{proj}{^{-1}} \pmb{s}_1,\ldots,\drm{proj}{^{-1}} \pmb{s}_q \}$ and work out the information about $\drm{Aut}{\oplus_{\lambda} Z_{\lambda,x,\pmb{s}_{k_1}}}$ reductively. Note that $Z_{\lambda,x,\pmb{s}_i}$ and $Z_{\lambda,x,\pmb{s}_j}$ are not orthogonal, $\forall \hspace{0.6mm} i,j \in [q]$, so we now know the exact way of $\drm{Aut}{\oplus_{\lambda} Z_{\lambda,x,\pmb{s}_{k_1}}}$ action on the family $\{ \drm{proj}{^{-1}} \pmb{s}_1,\ldots,\drm{proj}{^{-1}} \pmb{s}_q \}\setminus\{ \drm{proj}{^{-1}} \pmb{s}_{k_1} \}$. Moreover we have a natural relation among members of $C_m^x$: two vertices $w'$ and $w''$ are said to be related if 
$$
\dproj{\oplus_{\lambda} \big(X_{\lambda,x,m-1} \oplus Z_{\lambda,x,\pmb{s}_{k_1}} \big)}(\pmb{e}_{w'}) =
\dproj{\oplus_{\lambda} \big(X_{\lambda,x,m-1} \oplus Z_{\lambda,x,\pmb{s}_{k_1}} \big)}(\pmb{e}_{w''}). 
$$
Evidently, it is an equivalence relation, so there is a partition $P_{k_1}$ of $C_m^x$ induced from the relation. 

If each cell of $P_{k_1}$ is actually a singleton, we can deal with the rest of subsets $\{ \drm{proj}{^{-1}} \pmb{s}_i : 1\leq i \leq q \mbox{ and } i \neq k_1 \}$ in virtue of the structure of $\drm{Aut}{\oplus_{\lambda} Z_{\lambda,x,\pmb{s}_{k_1}}}$. Accordingly, let us assume there are non-trivial cells in $P_{k_1}$. A moment's reflection would show that one can readily refine $P_{k_1}$ by means of Lemma \ref{Lemma-EquitablePartProj}, so we make a further assumption that the partition $\left\{ \{v\} : v \in \cup_{i=1}^{m-1} C_i^x \right\} \cup P_{k_1}$ of $[n]$ is an equitable one.

Now, we pick arbitrarily a subset $\drm{proj}{^{-1}} \pmb{s}_{k_2}$ from $\{ \drm{proj}{^{-1}} \pmb{s}_1,\ldots,\drm{proj}{^{-1}} \pmb{s}_q \} \setminus \{ \drm{proj}{^{-1}} \pmb{s}_{k_1} \}$ and find the information about $\drm{Aut}{\oplus_{\lambda} Z_{\lambda,x,\pmb{s}_{k_2}}}$ reductively. Then we can obtain a refinement $P_{k_2}$ of the partition $P_{k_1}$ by comparing projections of $C_m^x$ onto the subspace $\oplus_{\lambda} \big(X_{\lambda,x,m-1} \oplus Z_{\lambda,x,\pmb{s}_{k_1}} \oplus Z_{\lambda,x,\pmb{s}_{k_2}} \big)$. Because of the relation that  $Z_{\lambda,x,\pmb{s}_i}$ and $Z_{\lambda,x,\pmb{s}_j}$ are not orthogonal, $\forall \hspace{0.6mm} i,j \in [q]$, we need to expose at most $\min\{q,\lceil \log p \rceil\}$ subsets in the family $\{ \drm{proj}{^{-1}} \pmb{s}_i : i \in [q] \}$ to obtain a partition of $C_m^x$ with all cells singleton. After having exposed $\min\{q,\lceil \log p \rceil\}$ subsets, we deal with the rest of subsets in the family  according to the structure of the group determined by subspaces we have investigated, which is the same as what we did in dealing with the case 2.1).
  
  \end{itemize}

 \end{itemize}

\end{itemize}

Now let us turn back to the 1st case that $|C_m^x| \leq n/2$. Recall that $\Pi[ \oplus V_{\lambda} ; x ]$ is a balanced partition consisting of cells $C^x_1 = \{ x \},C^x_2,\ldots,C^x_m$ such that $m\geq 3$ and $|C^x_2| \leq \cdots \leq |C^x_m|$, and that each eigenspace $V_{\lambda}$ of $\dAM{G}$ possesses an orthogonal decomposition $V_{\lambda,\Pi_x} \oplus Y_{\lambda,x}$. For each $i\in [m]$, we use $X_{\lambda,C^x_i}$ ($\lambda \in \drm{spec}{\dAM{G}}$ and $i \in [m]$) to denote the subspace $\duspan{Y_{\lambda,x}}{C^x_i}$. Since $\mathtt{B} = [n]$, $C^x_2$ cannot be a singleton. We only show in what follows how to work out the information about the structure of $\drm{Aut}{\oplus_{\lambda} \big( X_{\lambda,C^x_2} \oplus X_{\lambda,C^x_3} \big)}$, for we can use the same method to deal with other cells of $\Pi[ \oplus V_{\lambda} ; x ]$.  Apparently, there are two possibilities.

\begin{itemize}

\item[(1)] $\duspan{\oplus_{\lambda} \hspace{0.5mm} X_{\lambda,C^x_2}}{C^x_3}\neq\pmb{0}$.

It is clear that in this case we should use the information about $\drm{Aut}{\oplus_{\lambda} X_{\lambda,C^x_2}}$ to reveal symmetries represented in $\oplus_{\lambda} X_{\lambda,C^x_3}$, which is similar to the case 2).

As we have pointed out in Section 3.1, one may use $C^x_3$ to split the subspace $X_{\lambda,C^x_2}$, so we make a further assumption that $\duspan{X_{\lambda,C^x_2}}{C^x_3} = X_{\lambda,C^x_2}$ for any $\lambda \in \drm{spec}{\dAM{G}}$. On the other hand, it is easy to see that $Y_{\lambda,x}$ and $X_{\lambda,C^x_2}$ are both $\mathfrak{G}_x$-invariant, so is the subspace $Y_{\lambda,x} \ominus X_{\lambda,C^x_2}$ that is the orthogonal complement of $X_{\lambda,C^x_2}$ in $Y_{\lambda,x}$. Let $\hat{X}_{\lambda,C^x_3}$ denote the subspace $\duspan{Y_{\lambda,x} \ominus X_{\lambda,C^x_2}}{C^x_3}$. 

Because the partition $\Pi[ \oplus V_{\lambda} ; x ]$ is equitable, $\dproj{ X_{\lambda,C_2^x} }( \pmb{R}_{C_3^x} ) = \pmb{0}$ for any $\lambda \in \drm{spec}{\dAM{G}}$. Notice that $\duspan{\oplus_{\lambda} \hspace{0.5mm} X_{\lambda,C^x_2}}{C^x_3}\neq\pmb{0}$, so there exist a group of vectors $\pmb{s}_1,\ldots,\pmb{s}_q$ in $\oplus_{\lambda} \hspace{0.5mm} X_{\lambda,C^x_2}$ such that $q\geq 2$ and $\forall \hspace{0.5mm} i \in [q]$, $\exists \hspace{0.5mm} w \in  C_m^x$ {\it s.t.,} $\dproj{\oplus_{\lambda}  X_{\lambda,C^x_2}}(\pmb{e}_w) = \pmb{s}_i$.  

Set $\drm{proj}{^{-1}} \pmb{s}_i = \{ w \in C_3^x : \dproj{\oplus_{\lambda} X_{\lambda,C^x_2}}(\pmb{e}_w) = \pmb{s}_i \}$. Since each subspace $X_{\lambda,C^x_2}$ is $\mathfrak{G}_x$-invariant, if any one of three cases below occurs then $C_3^x$ cannot be an orbit of $\mathfrak{G}_x$:
\begin{enumerate}

\item $\cup_{i=1}^q \hspace{0.5mm} \drm{proj}{^{-1}} \pmb{s}_i \subsetneq C^x_3$;

\item $\exists \hspace{0.5mm} i,j \in [q]$ {\it s.t.,} $|\drm{proj}{^{-1}} \pmb{s}_i| \neq |\drm{proj}{^{-1}} \pmb{s}_j|$;

\item $\exists \hspace{0.5mm} i,j \in [q]$ {\it s.t.,} $\pmb{s}_i$ and $\pmb{s}_j$ do not belong to the same orbit of $\drm{Aut}{\oplus_{\lambda} X_{\lambda,C^x_2}}$.

\end{enumerate}
In what follows, we assume none of cases listed above occurs. Clearly there are again two cases. 

 \begin{itemize}
 
 \item[(1.1)]  $\forall\hspace{0.6mm} x_3,y_3 \in C^x_3$, $\dproj{\oplus_{\lambda} X_{\lambda,C^x_2}}(\pmb{e}_{x_3}) \neq \dproj{\oplus_{\lambda}  X_{\lambda,C^x_2}}(\pmb{e}_{y_3})$.

Obviously, if the partition of $C^x_2$, composed of orbits of $\drm{Aut}{\oplus_{\lambda} X_{\lambda,C^x_2}}$, has only singleton cells, then the action of $\drm{Aut}{\oplus_{\lambda} \big( X_{\lambda,C^x_2} \oplus \hat{X}_{\lambda,C^x_3} \big)}$ on $C^x_3$ is also trivial, so we assume that the action of $\drm{Aut}{\oplus_{\lambda} X_{\lambda,C^x_2}}$ on $C^x_2$ is transitive, otherwise we consider those orbits one by one. For the same reason we suppose that the action of $\drm{Aut}{\oplus_{\lambda} X_{\lambda,C^x_2}}$ on $C^x_3$ is also  transitive. 

Since $\dproj{X_{\lambda,C^x_2}}( \pmb{R}_{C_3^x} ) = \pmb{0}$, $\forall\lambda\in\drm{spec}{\dAM{G}}$, the action of $\big( \drm{Aut}{\oplus_{\lambda} X_{\lambda,C^x_2}} \big)_{u_2}$ on $C_3^x$ possesses at least two orbits, where $u_2$ is a vertex $C_2^x$. We use $T_{u_2}(C_3^x)$ to denote the one of the minimum order, so $|T_{u_2}(C_3^x)| \leq |C_3^x|/2$. Moreover $\bigcup_{u_2 \in C_2^x} T_{u_2}(C_3^x) = C_3^x$, for the action of $\drm{Aut}{\oplus_{\lambda} X_{\lambda,C^x_2}}$ on $C^x_3$ is transitive.  

Set $\hat{X}_{\lambda,C_3^x,u_2} = \duspan{\hat{X}_{\lambda,C_3^x}}{T_{u_2}(C_3^x)}$, where $\lambda \in \drm{spec}{\dAM{G}}$. Then for each $v_2 \in C_2^x$, we work out the information about the group $\drm{Aut}{\oplus_{\lambda} \hat{X}_{\lambda,C_3^x,v_2}}$. Next we conduct a test for consistency of actions of $\drm{Aut}{\oplus_{\lambda} \hat{X}_{\lambda,C_3^x,v_2}}$ and of $\big( \drm{Aut}{\oplus_{\lambda} X_{\lambda,C^x_2}} \big)_{v_2}$ for every $v_2$ in $C_2^x$. 

To be precise we need to determine the orbits of $\big( \drm{Aut}{\oplus_{\lambda} \hat{X}_{\lambda,C_3^x,v_2}} \big) \cap \big( \drm{Aut}{\oplus_{\lambda} X_{\lambda,C^x_2}} \big)_{v_2}$ and a series of partitions of $C_3^x$ associated with a fastening sequence of the group. Note that each member of $T_{v_2}(C_3^x)$ has a representative in $\oplus_{\lambda} X_{\lambda,C_2^x}$, so this could be done efficiently. As a result, we could obtain the information about $\drm{Aut}{\oplus_{\lambda} \big( X_{\lambda,C^x_2} \oplus \hat{X}_{\lambda,C^x_3} \big)}$.

 \item[(1.2)] $\exists\hspace{0.6mm} \pmb{s}_1,\ldots,\pmb{s}_q \in \oplus_{\lambda} X_{\lambda,C^x_2}$ such that $2\leq q \leq |C_3^x|/2$ and $\forall\hspace{0.6mm} w_3 \in C^x_3$, $\exists\hspace{0.6mm} i \in [q]$, 
 $$\dproj{\oplus_{\lambda} X_{\lambda,x,m-1}}(\pmb{e}_{w_3}) = \pmb{s}_i.$$

One can readily see that we can employ the method for dealing with the case 2.2) and (1.1) to work out the information about $\drm{Aut}{\oplus_{\lambda} \big( X_{\lambda,C^x_2} \oplus \hat{X}_{\lambda,C^x_3} \big)}$.
 
 \end{itemize}

\item[(2)] $\oplus_{\lambda} X_{\lambda,C^x_2} \perp \oplus_{\lambda} X_{\lambda,C^x_3}$.

It is clear that if there exists a cell $C_i^x$ ($i\neq 2$ or 3) of $\Pi[ \oplus V_{\lambda} ; x ]$ such that $\duspan{\oplus_{\lambda} X_{\lambda,C^x_2}}{C_i^x} \neq \pmb{0}$ and $\duspan{\oplus_{\lambda} X_{\lambda,C^x_3}}{C_i^x} \neq \pmb{0}$, then we can use that cell to split some of subspaces in the sum $\oplus_{\lambda} X_{\lambda,C^x_2}$ or in the sum $\oplus_{\lambda} X_{\lambda,C^x_3}$. Consequently we assume that for each $\lambda\in\drm{spec}{\dAM{G}}$, $\duspan{X_{\lambda,C^x_2}}{C_i^x} = X_{\lambda,C^x_2}$ and $\duspan{X_{\lambda,C^x_3}}{C_i^x} = X_{\lambda,C^x_3}$. In this case, we can use the method for dealing with the case (1) to work out $\drm{Aut}{\oplus_{\lambda} \big( X_{\lambda,C^x_2} \oplus X_{\lambda,C^x_3} \big)}$ with $C_3^x$ replaced by $C_i^x$.

As a result, we assume that $\forall C_i^x \in \Pi[ \oplus V_{\lambda} ; x ]$, 

$$
\duspan{\oplus_{\lambda} X_{\lambda,C_2^x}}{C_i^x} \neq \pmb{0} \Rightarrow \oplus_{\lambda} X_{\lambda,C_i^x} \perp \oplus_{\lambda} X_{\lambda,C_3^x},$$ or 
$$\duspan{\oplus_{\lambda} X_{\lambda,C_3^x}}{C_i^x} \neq \pmb{0} \Rightarrow \oplus_{\lambda} X_{\lambda,C_i^x} \perp \oplus_{\lambda} X_{\lambda,C_2^x}.$$
In other words, $C_2^x$ and $C_3^x$ are completely irrelevant under the action of $\mathfrak{G}_x$. Then we can cope with $\oplus_{\lambda} X_{\lambda,C_2^x}$ and $\oplus_{\lambda} X_{\lambda,C_3^x}$ separately.

\end{itemize}

\vspace{2mm}
Now let us turn to the case that $\bar{\Pi}[ \oplus V_{\lambda} ] = \{ [n] \}$ but $\mathtt{B} \subsetneq [n]$. Apparently the quotient graph $G/\bar{\Pi}[ \oplus V_{\lambda} ]$ has only one vertex in this case, so $G$ is a regular graph and thus $\lambda \notin \drm{spec}{\dAM{G/\bar{\Pi}[ \oplus V_{\lambda} ]}}$ if $\lambda$ is not the biggest eigenvalue $\lambda_1$ of $\dAM{G}$.  Recall that by carrying out first two operations of outputting $\Pi[ \oplus V_{\lambda} ; \mathtt{B} ]$ on the rest of vertices of $G$, one can obtain a group of subsets $\mathtt{B}_1 = \mathtt{B},\mathtt{B}_2,\ldots,\mathtt{B}_q$, which form a partition of $[n]$. We use $V_{\lambda,\mathtt{B}_i}$ to denote the subspace $\duspan{ V_{\lambda} }{\mathtt{B}_i}$, where $\lambda \in \drm{spec}{\dAM{G}} \setminus \{\lambda_1\}$ and $i = 1,\ldots,q$. Then there are two possibilities. 

\begin{itemize}

\item[I)] $\forall \hspace{0.5mm} i,j \in [q]$, if $i \neq j$ then $\oplus_{\lambda \neq \lambda_1} V_{\lambda,\mathtt{B}_i} \perp \oplus_{\lambda \neq \lambda_1} V_{\lambda,\mathtt{B}_j}$.

It is easy to see that one can use the machinery developed for dealing the case 1) to obtain the information about $\drm{Aut}{\oplus V_{\lambda}}$.

\item[II)] $\exists \hspace{0.5mm} i,j \in [q]$, {\it s.t.,} $i \neq j$ and $\duspan{\oplus_{\lambda \neq \lambda_1} V_{\lambda,\mathtt{B}_i}}{\mathtt{B}_j} \neq \pmb{0} $.

In this case, we first determine the partition $\stackrel{\scriptscriptstyle \Box }{\Pi}[ \oplus V_{\lambda} ; \mathtt{B} ]$ of $\{ \mathtt{B}_1,\ldots,\mathtt{B}_q \}$ induced by $\Pi[ \oplus V_{\lambda} ; \mathtt{B} ]$. Suppose  $\mathtt{L}_1=\mathtt{B}_1,\mathtt{L}_2,\ldots,\mathtt{L}_c$ are cells of $\stackrel{\scriptscriptstyle \Box }{\Pi}[ \oplus V_{\lambda} ; \mathtt{B} ]$ such that $|\mathtt{L}_1| \leq \cdots \leq |\mathtt{L}_c|$, and set $Y_{\lambda,\mathtt{B}_1} = V_{\lambda} \ominus V_{\lambda,\mathtt{B}_1}$. Then we can use those cells to split the subspace $Y_{\lambda,\mathtt{B}_1}$. 

Let $X_{\lambda,\mathtt{L}_i}$ ($i = 2,\ldots,c$) denote the subspace $\duspan{Y_{\lambda,\mathtt{B}_1}}{\mathtt{L}_i}$. Clearly each cell $\mathtt{L}_i$ ($i=2,\ldots,c$) is invariant under the action of $\mathfrak{G}_{\mathtt{B}}$, so is the subspace $\oplus_{\lambda} X_{\lambda,\mathtt{L}_i}$ according to Lemma \ref{ProjOperatorCommutative}. On the other hand, each $\mathtt{L}_i$ may contain some of cells of $\Pi[ \oplus V_{\lambda} ; \mathtt{B} ]$, so we can split $X_{\lambda,\mathtt{L}_i}$ further by means of those cells relevant. Note that $\Pi[ \oplus V_{\lambda} ; \mathtt{B} ]$ is an equitable partition, so Lemma \ref{Lem-SeparatingPseudoOrbit} works well for cells of $\Pi[ \oplus V_{\lambda} ; \mathtt{B} ]$. As a result, we can finally decompose the subspace $Y_{\lambda,\mathtt{B}_1}$ in a way similar to (\ref{SubspaceDecomposition-I}), and accordingly we can use the machinery developed for the case that $\mathtt{B} = [n]$ and $| C_m^x | \leq n/2$ to work out the information about $\drm{Aut}{\oplus V_{\lambda}}$.

\end{itemize}

It is not difficult to verify that in the case that $\bar{\Pi}[ \oplus V_{\lambda} ] = \{ S_1,\ldots,S_t \}$ with $t \geq 2$, one can use the machinery developed for finding the information about $\mathfrak{G}_x$ to obtain the information about $\mathfrak{G}$.

\subsection{Complexity Analysis}

As we have seen in the first two parts of this section, the algorithm $\mathscr{A}$ outputs, by inputting the decomposition $\oplus V_{\lambda}$, the information about $\mathfrak{G}$. Let $f(n)$ denote the number of computations involved by carrying out $\mathscr{A}$. Now we analyze the complexity of the algorithm. 

First of all, it is routine to check that the number of computations involved for obtaining two partitions $\bar{\Pi}[\oplus V_{\lambda}]$ and $\Pi[ \oplus V_{\lambda};\mathtt{B} ]$ is bounded above by $n^K$ for some integer $K$. Suppose the adjacency matrix $\dAM{G}$ possesses $t$ distinct eigenvalues.  

We shall prove by induction on $n$ that $f(n) \leq n^{C \log n}$, where $C$ is a constant not less than $\max\{ K,4 \}$. Let us first consider those three cases relevant to the restriction that $\bar{\Pi}[ \oplus V_{\lambda} ] = \{ [n] \}$, $\mathtt{B} = [n]$ and $| C_m^x | > n/2$. One can readily verify the assertion for $n$ less than 4. We assume the assertion holds for any positive integer not more than $n-1$. 

\begin{itemize}

\item[1)] Let $p$ stand for the order of each cell $E_i^B$, where $i = 1,\ldots,q$ and $q \geq 2$. Consequently, $n = p \cdot q$ and thus 
$$
f(n) \leq n \cdot \left[ n^K + t \cdot {q \choose 2} f(p) \right] 
 \leq n^{1+K} + n^2 \cdot q^2 f(p).
$$
According to the inductive hypothesis, $f(p) \leq p^{C\log p}$. Hence
$$
q^2 f(p) \leq  q^2 \cdot p^{C\log p}
 \leq  (q \cdot p)^{C\log p} 
 \leq  n^{C\log (n/2)} 
 =  n^{C\log n}/n^{C}.
$$
As a result, $f(n) / n^{C\log n} \leq n^{ (1+K) - 2C } + n^{2-C} \leq 1$.

\item[2)] Recall that $\Pi[ \oplus V_{\lambda} ; x ] = \left\{ C^x_1 = \{ x \},C^x_2,\ldots,C^x_m \right\}$  and $|C^x_2| \leq \cdots \leq |C^x_m|$, where $m \geq 3$. Set $s = \left|\cup_{i=2}^{m-1} C_i^x \right|$. Then $s < n/2$ since $|C^x_m| > n/2$.

\begin{itemize}

\item[2.1)] Let $l_k$ ($k=1,\ldots,d$) denote the order of $N_k^+(x)$, which is the set of out-neighbors of $x$ at the $k$-th level in the graph $\mathrm{DPBG}(\mathtt{B})$. Then
\begin{align*}
f(n) \leq &~ n \cdot \left[ n^K + t\cdot \left( f(s) + \sum_{k=1}^d l_k \cdot 2 f(s) \right)  \right]\\
 \leq &~ n \cdot \left[ n^K + t \cdot \left( 1 + \sum_{k} l_k \right) 2 f(s) \right] \\
 \leq &~ n \cdot \left[ n^K + n^2 \cdot 2 f(s) \right]
\end{align*}
According to the inductive hypothesis, $f(s) \leq s^{C\log s} \leq (n/2)^{C\log (n/2)}$. Hence
$$
n^3 \cdot 2f(s) \leq  2 n^3 \cdot  (n/2)^{C\log (n/2)}
 = n^{C \log n} \cdot \frac{2^{C+1}}{n^{2C-3}}.
$$
As a result, $f(n) / n^{C\log n} \leq n^{ (1+K) - 2C } + (2/n)^{C+1} \cdot n^{4-C} \leq 1$.

\item[2.2)] It is easy to see that in the case 2.2A), we can use the argument used in dealing with the case 1) to prove the assertion, so let us consider the case 2.2B). Suppose the order of the subset $\drm{proj}{^{-1}} \pmb{s}_{i}$ is equal to $p$, where $1 \leq i \leq q$. 

Note that after having exposed $\min\{q,\lceil \log p \rceil\}$ subsets, we have a partition of $C_m^x$ with all cells singleton, so we can construct a direct graph like $\mathrm{DPBG}(\mathtt{B})$ to deal with the rest of subsets in the family $\{ \drm{proj}{^{-1}} \pmb{s}_1,\ldots,\drm{proj}{^{-1}} \pmb{s}_q \}$. Again we use $l_k$ ($k=1,\ldots,d$) denote the order of $N_k^+(x)$, which is the set of out-neighbors of $x$ at the $k$-th level in the new graph. Then 
\begin{align*}
f(n) \leq &~ n \cdot \Bigg[ n^K + t\cdot \bigg( f(s) + \prod_{i=1}^{\min\{q,\lceil \log p \rceil\}} (q-i) \cdot f(p)  \\
~ &~ \hspace{1.35cm} + \sum_{k=1}^d l_k \cdot \Big( f(p) + f(s) + \prod_{i=1}^{\min\{q,\lceil \log p \rceil\}} (q-i) \cdot f(p)  \Big)\bigg)  \Bigg]  \\
 \leq &~ n \cdot \left[ n^K + t \cdot  \Big(1 + \sum_{k} l_k \Big) \cdot \Big( f(s) + f(p) + \prod_{i} (q-i) \cdot f(p) \Big)\right] \\
 \leq &~ n \cdot \left[ n^K + n^2 \cdot \Big( f(s) + f(p) + \prod_{i} (q-i) \cdot f(p) \Big) \right]
\end{align*}
According to the inductive hypothesis, we have 
$$
n^3 \cdot \big(f(s) + f(p)\big) \leq 2 n^3\cdot (n/2)^{ C\log (n/2) } = n^{C \log n} \cdot \frac{2^{C+1}}{n^{2C-3}}
$$
and 
$$
n^3 \prod_{i} (q-i) \cdot f(p) \leq n^3 q^{\lceil \log p \rceil} p^{C \log p} \leq n^3 \big( qp \big)^{C \log p} \leq n^3 n^{C\log (n/2)} \leq n^{C\log n} / n^{C-3}.
$$
As a result, $f(n) / n^{C\log n} \leq n^{(1+K)-2C} + (2/n)^{C+1} \cdot n^{4-C} + n^{3-C} \leq 1$.

\end{itemize}

\end{itemize}
Accordingly, $f(n) \leq n^{C \log n}$ if $\bar{\Pi}[ \oplus V_{\lambda} ] = \{ [n] \}$, $\mathtt{B} = [n]$ and $| C_m^x | > n/2$, where $C$ is a constant larger than or equal to $K$. By the same argument, one can easily prove that $f(n) \leq n^{C \log n}$ in the case that $\bar{\Pi}[ \oplus V_{\lambda} ] = \{ [n] \}$, $\mathtt{B} = [n]$ and $| C_m^x | \leq n/2$.

\vspace{6mm}
\noindent{\Large\bf Acknowledgments}

\vspace{3mm}
I would like to express my deep gratitude to Prof. Fu-Ji Zhang, Prof. Xue-Liang Li, Prof. Qiong-Xiang Huang, Prof. Wei Wang, Prof. Sheng-Gui Zhang, Prof. Li-Gong Wang and Prof. Johannes Siemons for their valuable advice which significantly improves the quality of this paper. I also want to thank Prof. Yi-Zheng Fan and Prof. Xiang-Feng Pan for their encouragement and support. Last but not the least, I would like to thank Dr. You Lu, Dr. Yan-Dong Bai, Dr. Bin-Long Li and Dr. Xiao-Gang Liu for helping me verify many parts of this paper.

\end{document}